\date{11.2.2013} 
\newcommand\bD{\Bbb{D}}
\newcommand\slim{{s-\lim}}
\newcommand{\bk}{{\bf k}}
\newcommand{\bx}{{\bf x}}
\renewcommand\mlabel{\label}
\begin{document} 

\title{Norm continuous unitary representations of \\
Lie algebras of smooth sections} 
\author{B. Janssens
\begin{footnote}{Department  Mathematik, FAU Erlangen-N\"urnberg, Cauerstrasse 11, 
91058 Erlangen, Germany, Email:mail@bjadres.nl}
  \end{footnote}
\begin{footnote}
{Supported by the Emerging Field Project ``Quantum Geometry'' of the 
FAU Erlangen-N\"urnberg.}
\end{footnote},  
K.-H. Neeb
\begin{footnote}{Department  Mathematik, FAU Erlangen-N\"urnberg, Cauerstrasse 11, 
91058 Erlangen, Germany, Email: karl-hermann.neeb@math.uni-erlangen.de} 
  \end{footnote}
\begin{footnote}
{Supported by DFG-grant NE 413/7-2, Schwerpunktprogramm ``Darstellungstheorie''.}
\end{footnote}
}

\maketitle

\begin{abstract}  
Let $\fK \to X$ be a smooth Lie algebra bundle whose typical fiber is the 
compact Lie algebra $\fk$. 
We give a complete description 
of the bounded (i.e.\ norm continuous) unitary representations 
of the Fr\'echet--Lie 
algebra $\Gamma(\fK)$ of all smooth sections of $\fK$, 
and of the LF-Lie algebra $\Gamma_c(\fK)$ of compactly supported smooth sections.
For $\Gamma(\fK)$, bounded unitary irreducible representations are finite tensor products of 
so-called evaluation representations, hence in particular finite-dimensional. 
For $\Gamma_{c}(\fK)$, bounded unitary irreducible (factor) representations are
possibly infinite tensor products of evaluation representations, 
which reduces the classification 
problem to results of Glimm and Powers on    
irreducible (factor) representations of UHF $C^*$-algebras.
The key part in our proof is the result that
every irreducible bounded unitary representation 
of a Lie algebra of the form $\fk \otimes_{\mathbb{R}} \cA_{\mathbb{R}}$,
where $\cA_{\mathbb{R}}$ is a unital real continuous inverse algebra, 
is a finite product of evaluation representations.
On the group level, our results cover in particular the bounded 
unitary representations of the identity component $\Gau(P)_0$ 
of the group of smooth gauge transformations of a principal fiber bundle $P \to X$ 
with compact base and structure group, and 
the groups $\SU_n(\cA)_0$ with $\cA$ an involutive commutative 
continuous inverse algebra.\\

\nin{\em Keywords:} bounded unitary representation, evaluation representation, 
Lie algebra bundle, Lie algebra of sections, UHF algebra \\ 
{\em MSC2010:} 17B15, 17B65, 17B67, 22E45, 22E65, 22E67   
\end{abstract}

\section*{Introduction} \mlabel{sec:0}

If $G$ is a topological group, then we call a unitary 
representation $\pi \: G \to \U(\cH)$ {\it norm continuous} or {\it bounded} 
if $\pi$ is continuous with respect to the norm topology on the 
unitary group~$\U(\cH)$. If $G$ is a 
Lie group\begin{footnote}{Here we are using the term ``Lie group'' 
for Lie groups modeled on locally convex spaces.}  
\end{footnote}
with an exponential function $\exp \: \g \to G$ which is a 
local diffeomorphism in $0$, i.e., $G$ is {\it locally exponential}, 
then basic Lie theory implies that the continuous homomorphisms 
$\alpha \: \g \to \fu(\cH)$, i.e., the {\it bounded unitary 
representations} of the Lie algebra $\g$, are in one-to-one 
correspondence with the bounded unitary representations 
of the simply connected covering $\tilde G_0$ of the identity 
component $G_0$ of $G$.
For $1$-connected, i.e., connected and simply connected, 
locally exponential Lie groups, bounded 
unitary representations can therefore be understood completely 
in terms of representations of their Lie algebras. 

Let $\fK \to X$ be a smooth Lie algebra bundle 
whose typical fiber is the 
finite-dimensional Lie algebra $\fk$. 
In the present paper we shall give a complete description 
of the bounded projective unitary representations of the Fr\'echet--Lie 
algebra $\Gamma(\fK)$ of all smooth sections of $\fK$ 
(endowed with the smooth compact open topology) and 
of the LF-Lie algebra $\Gamma_c(\fK)$ of compactly supported smooth sections, 
endowed with the locally convex direct limit topology with respect to the 
Fr\'echet--Lie algebras $\Gamma(\fK)_C := \{ s \in  \Gamma(\fK) \: \supp(s) \subeq C\}$, 
where $C \subeq X$ is a compact subset. 
If $X$ is compact, then both Lie algebras coincide, but if $X$ is non-compact, 
then their respective bounded representation theory is quite different. 

It will be shown that the problem reduces to the case where 
$\fk$ is compact semisimple, and the representations are linear rather than projective.

For every point $x \in X$ and unitary representation 
$(\rho,V)$ of the fiber $\fK_x \cong \fk$, we obtain an irreducible 
unitary representation 
$\pi_{x,\rho}(s) := \rho(s(x))$
of $\Gamma(\fK)$. We call these representations 
{\it evaluation representations}. Our central result is that 
every irreducible bounded unitary representation of $\Gamma(\fK)$ 
is a finite tensor product of irreducible evaluation representations in 
different points of $X$ and a one-dimensional representation 
(of the center). In particular, it is finite-dimensional. 

If $X$ is non-compact, then the bounded representation 
theory of the LF-Lie algebra  $\Gamma_c(\fK)$ is ``wild'' in the sense that there exist 
bounded factor representations of type II and III. Here our main result 
is a complete reduction of the classification of bounded irreducible 
representations to the classification of irreducible representations 
of UHF $C^*$-algebras. This correspondence is established as follows. 
Let $\bx \subeq X$ be a locally finite subset and 
let $(\rho_x, V_x)$ be
irreducible representations of 
$\cK_{x} \cong \fk$ corresponding to $x\in \bx$.
We then form the $C^*$-algebra 
\[ \cA_{\bx,\rho} := \widehat{\bigotimes}_{x\in \bx} B(V_x) \] 
for which the evaluation representations $\pi_{x,\rho_x}$ combine to a 
continuous morphism $\eta_{\bx,\rho} \: \Gamma_c(\fK) \to \cA_{\bx,\rho}$. 
We then show that 
a bounded irreducible (factor) representation 
of $\Gamma_c(\fK)$ 
is of the form 
$\pi = \beta \circ \eta_{\bx,\rho}$ for some irreducible (factor)
representation of $\beta$ of some $\cA_{\bx, \rho}$. Conversely, every 
irreducible (factor) representation $\beta$ of $\cA_{\bx,\rho}$ 
defines a bounded irreducible (factor) representation 
$\beta \circ \eta_{\bx,\rho}$ of $\Gamma_c(\fK)$. 
Since $\pi$ determines the set $\bx$ and the 
representations $\rho_x$ uniquely, 
we thus obtain a complete reduction of the classification 
of bounded irreducible (factor) representations of $\Gamma_c(\fK)$ 
to the corresponding problem for the UHF $C^*$-algebras 
$\cA_{\bx,\rho}$. These algebras have been classified by 
Glimm in \cite{Gli60}, where one also finds a characterization 
of their pure states. Even stronger results were obtained later by 
Powers in \cite{Po67}, where he shows that the automorphism group 
acts transitively on the set of pure states, so that every irreducible 
representation is a twist (by an automorphism) of an infinite 
tensor product of irreducible representations. 

The key part in our proof is the corresponding result for 
$X$ compact and the trivial Lie algebra bundle, i.e., for 
$\Gamma(\fK) \cong C^\infty(X,\fk) \cong \fk \otimes_\R C^\infty(X,\R)$. 
This is achieved by dealing with a substantially 
larger class of Lie algebras of the form 
$\fk_\cA := \fk \otimes_\R \cA_\R$, where $\cA_\R$ is a commutative 
unital continuous 
inverse algebra, i.e., a locally convex topological algebra 
with open unit group and continuous inversion map. 
Here our main result asserts that every irreducible 
bounded unitary representation of $\fk_\cA$ 
is a finite tensor product of {\it evaluation 
representations} $\pi_{\chi,\rho}(x \otimes a) = \chi(a)\rho(x)$, 
where $\rho \: \fk \to \fu(V_\rho)$ is an irreducible unitary 
representation and $\chi \: \cA_\R \to\R$ is a character. 
This generalizes a similar result 
obtained in \cite{NS11} for the special case $\cA_\R = C(X,\R)$,
where $X$ is compact. 
The main new point here is the passage from Banach algebras to 
general continuous inverse algebras $\cA$. This is achieved by a more 
direct approach that does not rely on holomorphic line bundles 
and Banach spaces of holomorphic sections. 

The methods we use to deal with the Lie algebras 
of the form $\fk_\cA$ rely heavily on the fact that $\cA_\R$ is unital, 
but it turns out that if $\cA_\R= C_0(X,\R)$, where 
$X$ is a locally compact space countable at infinity, 
then every bounded unitary representation 
of $\fk_\cA \cong C_0(X,\fk)$ has a unique extension to 
$C(X_\omega,\fk) \cong C_0(X,\fk) \rtimes \fk$, where $X_\omega$ is the one-point compactification of~$X$ (cf.\ Section~\ref{sec:6}). 
This extends our classification result to algebras 
of the form $C_0(X,\fk)$. 

On the group level our results cover in particular the bounded 
unitary representations of the identity component $\Gau(P)_0$ 
of the group of smooth gauge transformations of a $K$-principal bundle $P \to X$, 
where $K$ is a compact Lie group and $X$ is compact, the groups 
$\SU_n(\cA)_0$, where $\cA$ is an involutive commutative continuous inverse algebra, 
and, more generally, connected groups with Lie algebra $\fk_\cA$. 

The structure of the paper is as follows. 
In Section~\ref{sec:1} we start with a key ingredient of our classification 
for the algebras $\fk_\cA$, where $\cA$ is an 
involutive commutative continuous inverse algebra  $\cA$. 
Every function $\phi \: \cA \to \C$ which is multiplicative and polynomial 
of degree $N$ is a product of $N$ algebra homomorphisms 
$\chi_j \: \cA \to \C$ (Theorem~\ref{thm:2.5}). 
In Section~\ref{sec:2} we analyze the 
bounded unitary representations of $\fk_\cA \cong \fk \otimes_\R \cA_\R$, where 
$\fk$ is compact semisimple, $\cA_\R := \{ a \in \cA \: a^* = a\}$ and 
$\cA$ is an involutive commutative continuous inverse algebra. 
This is done by studying their complex linear extensions 
to $\g(\cA) = \g \otimes_\C \cA$, where 
$\g := \fk_\C$ is the complexification of $\fk$. 
Let $\ft \subeq \fk$ be maximal abelian, so that  
$\g^0 := \ft \otimes_\R \cA \subeq \g(\cA)$ is maximal abelian in 
$\g(\cA)$ and there is a natural triangular decomposition 
$\g(\cA) \cong \fg^+ \oplus \g^0 \oplus \fg^-$ determined by 
a positive system of roots $\Delta^+$ of the semisimple complex 
Lie algebra~$\g$. 
If $\pi \: \g(\cA) \to B(\cH)$ is a bounded irreducible 
representation, then the subspace 
$\cH^{\fg^-}$ is a one-dimensional representation of $\g^0$, hence 
determined by a linear functional $\lambda \: \g^0 \to \C$, called 
its {\it highest weight}. 
Two such representations are equivalent if and only 
if their highest weights are equal. Therefore it remains to determine 
the set of highest weights. This is achieved in Theorem~\ref{thm:ind-charac},  
which asserts that these are precisely the sums of functionals of the form 
$\lambda \otimes \chi$, where 
$\lambda$ is dominant integral for $(\fg, \ft_\C, \Delta^+)$ 
and $\chi \: \cA \to \C$ is an involutive algebra homomorphism. 
Here the functionals of the form $\lambda \otimes \chi$ correspond to 
evaluation representations 
$\pi_{\chi,\lambda}(x \otimes a) := \chi(a) \rho_\lambda(x)$, where 
$\rho_\lambda$ is the irreducible representation of $\fk_\C$ of highest weight 
$\lambda$. Part of the proof of Theorem~\ref{thm:ind-charac}, namely 
the $\fsl_2$-reduction needed to show that all highest weights are 
sums of such functionals, is carried out in Section~\ref{sec:3}. 
We obtain in particular that all irreducible 
bounded unitary representations of $\fk_\cA$ are finite 
tensor products of irreducible evaluation representations. 

In Section~\ref{sec:4} we finally turn to the Lie algebras 
$\Gamma(\fK)$ and $\Gamma_c(\fK)$. For $\Gamma_c(\fK)$ 
an important observation is that 
every factor representation is a tensor product 
$\pi = \pi_1 \otimes \pi_2$, where 
$\pi_1$ can be chosen such that it vanishes on the ideal 
$\Gamma(\fK)_C$ of sections supported in a given compact subset 
$C \subeq X$ 
and $\pi_2$ is a finite tensor product of irreducible evaluation representations. 
Eventually, this factorization provides the bridge between 
Lie algebra representations and the infinite tensor products of 
matrix algebras. 
In Section~\ref{sec:5} we show that assuming compactness of the fiber Lie algebra 
$\fk$ and linearity of the projective representations
does not lead to loss of generality
in the classification of irreducible 
bounded projective  
unitary representations of $\Gamma(\fK)$. 
In Section~\ref{sec:6} we first observe that,
for $\fk_\cA$ with $\cA$ 
the non-unital Banach algebra 
$\cA = \ell^1(\N,\C)$, infinite tensor products of evaluation 
representations lead  to infinite-dimensional 
irreducible bounded unitary representations and to a wild bounded 
representation theory. This is in contrast to the fact 
that, for a locally compact space countable at infinity, 
all irreducible bounded representations of $C_0(X,\fk) \cong \fk \otimes_\R C_0(X,\R)$ 
are finite tensor products of evaluation representations, even though 
$\cA = C_0(X)$ is not unital. If $\cA$ is a Banach algebra with 
$\ell^1(\N,\C) \subeq \cA \subeq C_0(\N,\C)$, then the representation 
theory of $\fk_\cA$ becomes quite involved because some 
bounded representations of $\ell^1(\N,\fk)$ only extend to unbounded 
projective representations of $\fk_\cA$. 
We plan to pursue these aspects in the forthcoming paper \cite{JN13}. 
In Subsection~\ref{subsec:6.3} we assume that 
$X$ is compact with boundary and in this context we characterize those 
representations of $\Gamma_c(X^\circ)$ extending to the Banach--Lie algebra 
of $C^k$-section whose $k$-jet vanishes on the boundary $\partial X$.  \\

Finite-dimensional representations of Lie algebras of the form 
$\g \otimes \cA$, where $\g$ is semisimple and $\cA$ a unital commutative algebra, 
are presently under active investigation from an algebraic point of view. 
More generally, one studies {\it equivariant map algebras}, i.e., 
Lie algebras of the form 
$(\g \otimes \cA)^\Gamma$, where 
$\cA$ is the algebra of regular functions on an affine variety 
and $\Gamma$ a finite group acting on $\g$ and $\cA$. 
The irreducible finite-dimensional representations of equivariant 
map algebras have recently been 
classified by Neher, Savage and Senesi (\cite{NSS12})\label{NSSref}. 
Their main result asserts that they are finite tensor products of 
evaluation representations, which is remarkably parallel to our results 
in Sections~\ref{sec:2} and~\ref{sec:4}. See 
also \cite{NS12} for a recent survey on equivariant map algebras. 
This connects with our context if 
$\cA = C^\infty(Y,\C)$ and $\Gamma$ acts freely on $Y$, so that 
$\fK := Y \times_\Gamma \fk$ is a Lie algebra bundle over $X := Y/\Gamma$ with 
$\Gamma(\fK) \cong (\fk \otimes \cA)^\Gamma$. 

In \cite{A-T93} an irreducible unitary representation of 
a Lie group $C^\infty_c(X,K)$ is called a {\it non-commutative distribution}. 
In this sense we contribute to the program outlined in \cite{A-T93} 
by classifying the bounded non-commutative distributions for 
$X$ and $K$ compact (at least for the identity component). 
The problem to classify all smooth 
(projective) irreducible unitary representations of gauge groups is still wide 
open, although the classification of their central extensions 
by Janssens and Wockel (\cite{JW10}) is a key step towards this goal. 

We conclude this introduction with a brief discussion of the literature 
on unbounded representation of mapping groups. 
For any, not necessary compact, Lie group $K$, 
the group $C(X,K)$ has unitary representations obtained 
as finite tensor products of evaluation representations. 
However, for some non-compact groups, such as $K = \tilde \SU_{1,n}(\C)$, 
one even has ``continuous'' tensor product representations which 
are irreducible and extend to groups of measurable maps 
(cf.\ \cite{JK85}, \cite{Be79}, \cite{CP87}, \cite{GK03}, \cite{VGG74, GGV80} 
for semisimple target groups, \cite{Gui72} for a general 
discussion and classification results for locally compact target groups, 
\cite{Ar69} for classification results for compact and nilpotent target groups,  
and \cite{Ols82} for an example where the target group 
$\U(\infty)$ is infinite-dimensional). 
In the algebraic context of loop groups, these representations also appear in 
\cite{JK89} which contains a classification of various  types of 
unitary representations generalizing highest weight representations. 
All these representations are most naturally defined 
on groups of measurable maps, so that they neither require a topology nor a smooth 
structure on~$X$.

One of the first references concerning unitary representations 
of groups of smooth maps such as $C^\infty(\R,\SU_2(\C))$ is 
\cite{GG68}, where the authors introduce the concept of a 
{\it derivative representation} which depends only on the 
derivatives up to some order $N$ in some point $t_0 \in \R$. 
Put in our language, derivative representations 
are studied as unitary representations 
of the Lie algebra $\fk_\cA$, where $\cA = \C[[X]]$ is the continuous 
inverse algebra of formal power series. 
In addition to these representations, 
there exist irreducible representations of mapping 
groups defined most naturally on maps of Sobolev $C^1$-maps, the so-called 
energy representations (cf.\ \cite{Is76, Is96}, \cite{AH78}, \cite{A-T93}, \cite{An10}). 
In \cite[\S\S 15, 17]{Is96} one even finds some projective modifications 
of these representations that lift 
to unitary representations of the simply connected covering group. 

For central extensions of loop groups $C^\infty(\bS^1,K)$, 
the most studied class of irreducible representations are the highest weight 
representations from Kac--Moody theory which extend to 
positive energy representations of the semidirect product 
with $\R$, generating the rotation action on $\bS^1$ 
(\cite{PS86}). Other unitary representations are 
the twisted loop modules from \cite{CP86}. These are 
representations of the semidirect product 
$C^\infty(\bS^1, K)\rtimes_\alpha \T$ induced from finite tensor products 
of evaluation representations of $C^\infty(\bS^1, K)$. 
The restrictions of these representations to the loop group are 
bounded but not irreducible. In \cite{To87} 
(see also \cite[\S 5.4]{A-T93}) Torresani studies 
projective unitary ``highest weight representations'' of 
$C^\infty(\T^d,\fk)$, where $\fk$ is compact simple. 
Besides the finite tensor products of evaluation 
representations ({\it elementary representations}) he 
finds finite tensor products of evaluation 
representations of $C^\infty(\T^d,\fk) 
\cong C^\infty(\T^{d-1}, C^\infty(\T,\fk))$, where the representations
 of the target algebra $C^\infty(\T,\fk)$ are projective 
highest weight representations ({\it semi-elementary representations}).

\tableofcontents 

\subsection*{Notation and terminology} 

A {\it (locally convex) Lie group} $G$ is a group equipped with a 
smooth manifold structure modeled on a locally convex space 
for which the group multiplication and the 
inversion are smooth maps. We write $\1 \in G$ for the identity element. 
Its Lie algebra $\g = \L(G)$ is identified with 
the tangent space $T_\1(G)$. The Lie bracket is obtained by identification with the 
Lie algebra of left invariant vector fields. 
A smooth map $\exp_G \: \g \to G$  is called an {\it exponential function} 
if each curve $\gamma_x(t) := \exp_G(tx)$ is a one-parameter group 
with $\gamma_x'(0)= x$. A Lie group $G$ is said to be 
{\it locally exponential} 
if it has an exponential function for which there is an open $0$-neighborhood 
$U$ in $\g$ mapped diffeomorphically by $\exp_G$ onto an 
open subset of $G$. 

If $\pi \: G \to \U(\cH)$ is a norm-continuous (=bounded) 
unitary representation 
of a locally exponential Lie group $G$, then it is automatically 
smooth (\cite[Thms.~III.1.5, IV.1.18]{Ne06}) and its differential 
$\dd\pi \: \g \to \fu(\cH)$ is a continuous representation 
of $\g$ by skew-hermitian bounded operators on $\cH$. We call 
such homomorphisms $\g \to \fu(\cH)$ {\it bounded unitary 
representations} of the Lie algebra~$\g$ because they are representations 
by bounded operators.  
For a {\it $1$-connected}, i.e., connected and simply connected, Lie group $G$,  
the bounded unitary representations 
of the Lie algebra $\g$ are in one-to-one correspondence with the 
unitary representations $\pi \: G \to \U(\cH)$ which are smooth maps 
with respect to the Lie group structure on $\U(\cH)$ defined by the norm. 
All groups considered in the present paper are locally exponential.

\section{Multiplicative characters on continuous inverse algebras} 
\mlabel{sec:1} 

In this section we provide a key ingredient of our classification 
results. We show that every function $\phi \: \cA \to \C$ on 
a continuous inverse algebra $\cA$ which is multiplicative and polynomial 
of degree $N$ is a product of $N$ algebra homomorphisms 
$\chi_j \: \cA \to \C$ (Theorem~\ref{thm:2.5}). This will be crucial 
in Section~\ref{sec:3} to deal with the Lie algebra $\su_2(\cA)$. 
For unital commutative Banach algebras, this result is already 
known (\cite{NS11}), so the new point is its extension 
to locally convex algebras. 

\subsection{Tensor products of continuous inverse algebras}

\begin{defn} (a) 
A unital locally convex associative algebra (with continuous multiplication) 
$\cA$ over $\bk \in \{ \R,\C\}$ 
is called a {\it continuous inverse algebra} 
(cia for short) if its unit group $\cA^\times$ is open and the inversion map 
$\cA^\times \to \cA, a \mapsto a^{-1}$ is continuous. 
We write $\Gamma_\cA := \Hom(\cA,\C) \setminus \{0\}$ for the set of 
non-zero algebra homomorphisms; the {\it characters of $\cA$}. 

If $\cA$ has no unit, then we say that $\cA$ is a cia if 
the unital algebra $\cA_+ := \bk \oplus \cA$ with the product 
\[ (t,a) (s,b) := (ts, t b + sa + ab) \] 
is a cia. 

(b) An {\it involutive cia} is a complex cia $\cA$, endowed with an antilinear 
antiisomorphism $a \mapsto a^*$. This map is called the {\it involution of $\cA$}. 
Then $\cA_\R := \{ a \in \cA \: a^* = a\}$ is a real subspace of $\cA$ 
which is a real cia if $\cA$ is commutative. 

A character $\chi \: \cA \to \C$ is said to be {\it involutive} if 
$\chi(a^*) = \oline{\chi(a)}$ holds for $a \in \cA$. 
We write $\Gamma_\cA^*$ for the set of non-zero involutive characters of~$\cA$. 
Note that, for any character $\chi$, we obtain another character 
$\chi^*(a) := \oline{\chi(a^*)}$, and that $\chi$ is involutive if and only if 
$\chi^* = \chi$. 
\end{defn}

\begin{ex} \mlabel{ex:cia} 
(a) For any compact smooth manifold $X$ and 
$k \in \N_0 \cup \{\infty\}$, the algebra 
$\cA := C^k(X,\bk)$ is a unital cia. Its characters are of the form 
$\chi(f) = f(x)$ for some $x \in X$ (cf.\ \cite[Thm.~4.3.1(b)]{Wag11b}). 

(b) If $X$ is non-compact, then 
$C^k(X,\bk)$ is no longer a cia because spectra of elements of a cia are compact. 
In this case the algebra $\cA := C^k_c(X,\bk)$ of compactly supported $C^k$-functions 
is a cia (cf.\ \cite[Prop. 7.1]{Gl02}). 
Its characters are of the form $\chi(f) =f(x)$ 
for some $x \in X$ (after extending the character to the unital algebra 
$\cA_+$, the arguments in the proof of \cite[Thm.~4.3.1(b)]{Wag11b} 
carry over). 

(c) For $d \in \N$, let 
$\cA := \bk[[X_1,\ldots, X_d]]$ denote the algebra of formal 
power series in the commuting variables $X_1,\ldots, X_d$. We endow 
$\cA$ with the Fr\'echet topology defined by the seminorms 
\[ p_\alpha(a) := |a_\alpha|, \quad \mbox{ where } \quad 
a = \sum_{\alpha \in \N_0^d} a_\alpha X^\alpha. \] 
Then $\cA$ is a unital cia with unit group 
$\cA^\times = \{ a \: a_0 \not= 0\}$ and the unique maximal ideal 
$\fm = (X_1, \ldots, X_d)$ is a hyperplane. 
Accordingly $\Gamma_\cA = \{ \eps\}$ with 
$\eps(a) = a_0$. 

(d) Let $K \subset \C^n$ be a compact set. For each open neighborhood 
$U$ of $K$, we denote by $\cO^{\infty}(U)$ the Banach algebra of 
bounded holomorphic functions on $U$, equipped with the supremum norm. 
Let $\cO_{\mathrm{an}}(K)$ be the algebra of germs of holomorphic functions around $K$.
Then $\cO_{\mathrm{an}}(K)$ is the direct limit of the algebras $\cO^{\infty}(U)$, where $U$ runs over the 
open neighborhoods of $K$, and we equip $\cO_{\mathrm{an}}(K)$ with the direct limit topology.
This makes $\cO_{\mathrm{an}}(K)$ into a complete unital cia (cf.~\cite{Wae54}).
If $K$ is a rationally convex subset of $\C^n$ (meaning 
that, for every $x \in \C^n \setminus K$, 
there is a polynomial $p$ such that $p(x) \notin p(K)$), 
then  
every character is of the form 
$\chi (f) = f(x)$ for some $x\in K$,
and $\Gamma_{\cO_{\mathrm{an}}(K)}$ is homeomorphic to $K$ (cf.~\cite{Bi07}).

(e) The algebra $\cA := \cS(\R^d,\R)$ of Schwartz functions on 
$\R^d$ is a cia with $\Gamma_\cA \cong \R^d$ 
(all characters are given by evaluations in 
points of $\R^d$). This follows from the automatic 
continuity of characters on cias, the density of 
$C^\infty_c(\R^d,\R)$, and (b).

(f) If $\sigma \: \Gamma \times X \to X$ is an action of the group 
$\Gamma$ by diffeomorphisms on the compact manifold $X$, then the subalgebra 
$C^\infty(X,\bk)^\Gamma$ of $\Gamma$-invariant functions is also a cia. 
More generally, if $\cA$ is a unital cia and $\Gamma \subeq \Aut(\cA)$, 
then the subalgebra $\cA^\Gamma$ of $\Gamma$-fixed points is a cia. 
Here the main point is to observe that $\cA^\Gamma \cap \cA^\times \subeq 
(\cA^\Gamma)^\times$. 
\end{ex}

\begin{defn} \mlabel{def:3.1} (a) If $E$ and $F$ are locally convex spaces, then the 
{\it projective topology} on $E \otimes F$ is defined by the seminorms 
\[  (p \otimes q)(x) = \inf \Big\{ \sum_{j=1}^n p(y_j) q(z_j) \: x = 
\sum_{j=1}^n  y_j \otimes z_j\Big\}, \]
where $p$ and $q$ are continuous seminorms on $E$ and $F$, respectively 
(cf.~\cite[Prop.~43.4]{Tr67}). 
It has the universal property that 
continuous bilinear maps $E \times F \to G$, $G$ a locally convex space, 
are in one-to-one correspondence with continuous linear maps 
$E \otimes F \to G$. 
One likewise defines the projective 
topology for tensor products of finitely many factors.

(b) Now let $\cA^{\hat\otimes N}$ denote the 
completed projective $N$-fold 
tensor product of a commutative cia $\cA$. 
It has the universal property that continuous linear maps 
$\cA^{\hat\otimes N} \to E$ 
to a complete locally convex space $E$ are in one-to-one correspondence 
with continuous $N$-linear maps $\cA^N \to E$. 
From the universal property and the associativity 
of projective tensor products it  
easily follows that $\cA^{\hat\otimes N}$ carries a natural 
unital commutative algebra structure, determined by 
\begin{eqnarray*}
(a_1 \otimes \cdots \otimes a_N)(b_1 \otimes \cdots \otimes b_N)
:=a_1b_1 \otimes \cdots \otimes a_Nb_N \quad \mbox{ for } \quad 
a_i, b_i \in \cA. 
\end{eqnarray*} 
The symmetric group $S_N$ acts on $\cA^{\hat\otimes N}$ 
by permuting the tensor factors, i.e.
\begin{eqnarray*}
\sigma(a_1 \otimes \cdots \otimes a_N):=a_{\sigma^{-1}(1)} \otimes \cdots 
\otimes a_{\sigma^{-1}(N)}, \quad \sigma \in S_N, a_j \in \cA.
\end{eqnarray*}
The fixed point algebra 
$\oline S^N(\cA) := (\cA^{\hat\otimes N})^{S_N}$ 
is also a unital commutative algebra which is a closed subalgebra of 
$\cA^{\hat\otimes N}$, hence complete. It 
is topologically generated by tensors of the form  
\begin{eqnarray*}
a_1 \vee \cdots \vee a_N:=\frac{1}{N!}
\sum_{\sigma \in S_N}a_{\sigma(1)} \otimes \cdots \otimes a_{\sigma(N)},
\end{eqnarray*}
and by polarization it is actually generated by the diagonal elements
\[ a^{\vee N} = a \vee \cdots \vee a=a \otimes \cdots \otimes a, \quad a \in \cA.\] 
\end{defn}

We do not know if the (completed) tensor product of two cias is always 
a cia. The following theorem shows that this is true for commutative ones. 

\begin{thm} If $\cA_1$ and $\cA_2$ are commutative unital continuous inverse algebras, 
then their completed projective 
tensor product $\cA_1 \hat\otimes \cA_2$ is also a continuous inverse algebra. 
\end{thm}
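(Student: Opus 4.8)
The multiplication on $\cB := \cA_1 \hat\otimes \cA_2$ is continuous by construction --- it is the unique continuous bilinear extension of the product on the algebraic tensor product, cf.\ Definition~\ref{def:3.1} --- and $\cB$ is complete, so the only thing to prove is that $\cB^\times$ is open and that inversion is continuous. By the usual translation argument this localises at $\1$: if $\1 - c$ is invertible for all $c$ in some $0$-neighbourhood $V$ and $c \mapsto (\1-c)^{-1}$ is continuous on $V$, then for $a_0 \in \cB^\times$ and $a$ near $a_0$ one sets $h := a_0^{-1}(a_0 - a)$, so that $a = a_0(\1 - h)$ and $a^{-1} = (\1-h)^{-1}a_0^{-1}$, and continuity of multiplication pushes $h$ into $V$ as $a \to a_0$. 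Thus the whole statement reduces to constructing $(\1-c)^{-1}$ for small $c$ and showing that it depends continuously on $c$.

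It is instructive to see where the difficulty lies. The naive candidate $(\1-c)^{-1} = \sum_{n\ge 0} c^n$ requires the Neumann series to converge. If the topologies of $\cA_1$ and $\cA_2$ were given by submultiplicative seminorms $p$ and $q$, then a short computation with the projective seminorm shows that each $p \otimes q$ is again submultiplicative on $\cA_1 \otimes \cA_2$, the series converges for $c$ small, and there is nothing more to prove. General cias are, however, not $m$-convex --- the algebra of holomorphic germs of Example~\ref{ex:cia}(d) is a case in point --- so the Neumann series may diverge even though the inverse exists. This is exactly the phenomenon that makes the non-commutative case unclear, and it forces us to build inverses by function theory rather than by series, using commutativity in an essential way.

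The plan is therefore to invert through a holomorphic functional calculus assembled from the two factors. First I would treat an element $c = \sum_{i=1}^{m} a_i \otimes b_i$ of the algebraic tensor product. The finitely many elements $a_1,\dots,a_m \in \cA_1$ and $b_1,\dots,b_m \in \cA_2$ have compact spectra (spectra of elements of a cia are compact, cf.\ Example~\ref{ex:cia}(b)), and the resolvents $\zeta \mapsto (\zeta\1 - a_i)^{-1}$ and $\omega \mapsto (\omega\1 - b_i)^{-1}$ are continuous, since inversion in $\cA_1,\cA_2$ is continuous. Choosing products of small circles enclosing these spectra and keeping the symbol $1 - \sum_i \zeta_i \omega_i$ away from $0$, the multivariable Cauchy integral
\[
  u := \frac{1}{(2\pi i)^{2m}} \oint \cdots \oint
  \frac{\prod_i (\zeta_i - a_i)^{-1} \otimes \prod_i (\omega_i - b_i)^{-1}}
       {1 - \sum_i \zeta_i \omega_i}\, d\zeta\, d\omega
\]
defines an element of $\cB$: the integrand is a continuous $\cB$-valued function on the compact torus, and $\cB$ is complete. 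The homomorphism property of the functional calculus of the factors then yields $(\1 - c)u = u(\1-c) = \1$, so $\1 - c$ is invertible whenever its symbol does not vanish on the (compact) joint spectrum, which holds for $c$ small. In the same way one checks that every character of $\cB$ restricts to characters of the two factors and hence equals a product $\chi_1 \otimes \chi_2$ with $\chi_i \in \Gamma_{\cA_i}$; this is the expected description of the spectrum and confirms that $\sigma_\cB(c)$ stays near $0$ when $c$ is small.

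The main obstacle is the passage from the dense algebraic tensor product to the completion $\cB$: a general small $c \in \cB$ is an infinite sum of simple tensors and is not a function of finitely many commuting generators, so the integral formula does not apply to it directly. I would resolve this by approximation: write $c = \lim_k c_k$ with $c_k \in \cA_1 \otimes \cA_2$, invert each $\1 - c_k$ by the construction above, and show that the inverses form a Cauchy net in $\cB$, whose limit is then $(\1-c)^{-1}$ by continuity of multiplication. The crux --- and the step I expect to be genuinely technical --- is to obtain the required \emph{uniform} control: one needs an estimate, coming from continuity of multiplication and the projective-tensor seminorms, that makes the functional-calculus inverses depend equicontinuously on the data, so that $c \mapsto (\1-c)^{-1}$ is continuous on a full $0$-neighbourhood of $\cB$ rather than merely on the algebraic tensor product. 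Establishing this equicontinuity, and with it both the openness of $\cB^\times$ at $\1$ and the continuity of inversion there, is where the real work of the proof is concentrated.
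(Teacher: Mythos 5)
Your proposal rests on a false premise, and the alternative route you take to avoid it has its central step missing. You assert that commutative cias are in general not $m$-convex, citing the germ algebra $\cO_{\mathrm{an}}(K)$, and on this basis you discard the Neumann-series argument. But Turpin's theorem (\cite{Tu70}) states precisely that every \emph{commutative} continuous inverse algebra has a defining family of submultiplicative seminorms; the germ algebra is commutative, so it is no counterexample (the paper's remark that the \emph{noncommutative} case is unknown reflects exactly that Turpin's theorem is a commutative result). The paper's proof is the route you abandoned: the computation you sketch and then dismiss --- that $p \otimes q$ is submultiplicative whenever $p$ and $q$ are --- combined with Turpin's theorem shows that $\cB := \cA_1 \otimes \cA_2$ is again a projective limit of commutative Banach algebras $\cB_r$, which already yields continuity of inversion. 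Since $m$-convexity alone does not make the unit group open (compare $C(\R,\R)$ with the compact-open topology, cf.\ Example~\ref{ex:cia}(b)), the paper handles the remaining point by a Gelfand argument: the character sets $\Gamma_{\cA_j}$ are equicontinuous, hence the Gelfand map $\cG \: \cB \to C(\Gamma_\cB)$ is continuous, and for $\|\cG(b)\| < 1$ the spectrum of the image of $b$ in every Banach quotient $\cB_r$ lies in the open unit disc, so the Neumann series $\sum_n b^n$ converges in each $\cB_r$ and therefore in $\hat\cB \subeq \prod_r \cB_r$; thus $\1 - b$ is invertible on a full $0$-neighbourhood.

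Concerning your functional-calculus alternative: even for elements of the algebraic tensor product the formula as written is not yet meaningful. A torus integral of products of resolvents computes a functional calculus only for functions holomorphic on the \emph{whole} closed polydisc bounded by the integration torus, so you need $\sum_i R_i S_i < 1$ with $R_i, S_i$ dominating the spectral radii of $a_i, b_i$; holomorphy of the symbol merely near the joint spectrum is not enough (that is the hard Arens--Calder\'on/Waelbroeck calculus). Smallness of $c$ in the projective topology does not produce a decomposition with this property unless you first prove that the spectral radius in a commutative cia is dominated by a continuous seminorm --- which is exactly the equicontinuity lemma above, absent from your argument. More importantly, you yourself flag that the passage from the algebraic tensor product to the completion --- the uniform bound on the inverses $(\1 - c_k)^{-1}$ needed to make them a Cauchy net --- is left unproven, and it is genuinely problematic: any estimate extracted from your integral formula degrades as the number of tensor terms in $c_k$ grows. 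Since this is precisely the step that gives openness of $\hat\cB^\times$, the proposal does not prove the theorem; the natural source of such uniform control is submultiplicative seminorms, i.e.\ the argument you discarded.
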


\begin{prf} Let $\cB := \cA_1 \otimes \cA_2$ denote the projective tensor product of 
$\cA_1$ and $\cA_2$. Then $\cB$ is a unital locally convex commutative algebra. 
The projective topology on $\cB$ is defined by the seminorms $p \otimes q$, 
where $p$ and $q$ are continuous seminorms on $\cA_1$ and $\cA_2$, respectively 
(cf.\ Definition~\ref{def:3.1}). 
Suppose that $p$ and $q$ are submultiplicative. For 
$x = \sum_j y_j \otimes z_j$ and 
$x' = \sum_j y_j' \otimes z_j'$ we have  
$xx' = \sum_{j,k} y_j y_k' \otimes z_j z_k'$. From 
\[ \sum_{j,k} p(y_j y_k') q(z_j z_k') 
\leq \sum_{j,k} p(y_j) q(z_j)  p(y_k') q(z_k')
= \sum_j  p(y_j) q(z_j)  \sum_k  p(y_k') q(z_k') \] 
we then derive that 
\[ (p \otimes q)(xx') \leq (p \otimes q)(x)(p \otimes q)(x'),\]
i.e., that $p \otimes q$ is submultiplicative. 

According to Turpin's Theorem (\cite{Tu70}), commutative continuous inverse algebras 
have a defining family of submultiplicative seminorms, i.e., they are projective limits 
of commutative Banach algebras. As we have seen above, this property is inherited 
by $\cB$. In particular, $\cB$ embeds into a topological product of Banach algebras, 
which implies that the inversion map $\cB^\times \to \cB$ is continuous. It therefore 
remains to show that $\cB^\times$ is a neighborhood of $\1$. 

It is clear that 
$\Gamma_\cB  := \Hom(\cB,\C)$ can be identified with the product set 
$\Gamma_{\cA_1} \times \Gamma_{\cA_2}$. As subsets of the dual space $\cA_j'$, the set  
$\Gamma_{\cA_j}$ is closed and equicontinuous.
Indeed, let $\1 + U$ be an open neighborhood of $\1$ in $\cA^\times$, with $U$
a circular neighborhood of 0. If $u \in U$, then $\1 + \gamma u$ is invertible
for all $\gamma \in \bk$ with $|\gamma|\leq 1$, and thus  
$\chi(\1 + \gamma u) \neq 0$ for all $\chi \in \Gamma_{\cA}$. This implies 
that $|\chi(u)| < 1$ for all $\chi$, which shows that $\Gamma_{\cA}$ is equicontinuous, 
hence weak-$*$-compact (cf.~\cite[Prop.~32.8]{Tr67}). In particular, the Gelfand maps 
\[ \cG_j \: \cA_j \to C(\Gamma_ {\cA_j}), \quad \cG_j(a)(\chi) := \hat a(\chi) := \chi(a) \] 
are continuous homomorphisms satisfying $\|\cG_j(u)\| < 1$ for $u \in U$.  
We thus obtain on $\Gamma_\cB$ the structure of a compact Hausdorff space, 
and the Gelfand map
\[ \cG \: \cB \to C(\Gamma_\cB), \quad a_1 \otimes a_2 \mapsto \hat{a_1} \otimes \hat{a_2}\] 
for $\cB$ is continuous by the universal property of the projective tensor product.

We claim that, for $\|\cG(b)\| < 1$, the Neumann series 
$\sum_{n = 0}^\infty b^n$ converges in the completion $\hat\cB$ of $\cB$. 
Let $r \: \cB \to \R$ be a submultiplicative seminorm and write $q_r \: \cB \to \cB_r$ for the 
map into the corresponding Banach algebra $\cB_r$, which is the completion of $\cB/r^{-1}(0)$ with 
respect to the norm induced by~$r$. Then $\Gamma_{\cB_r}$ is a subset of $\Gamma_{\cB}$, so that 
$\|\cG(b)\| < 1$ implies that $\Spec(q_r(b))$ 
(which is the image of $\cG(b)$ restricted to $\Gamma_{\cB_{r}}$)
is contained in the open unit disc and therefore 
$\sum_{n = 0}^\infty 
q_r(b)^n$ converges in $\cB_r$. Since $r$ was arbitrary, it follows that 
$\sum_{n = 0}^\infty b^n$ converges in $\hat\cB$, which can be identified with a subset of 
the product space $\prod_r \cB_r$. 

We conclude that $\hat\cB$ is a commutative unital algebra with an open unit group 
and continuous inversion. This completes the proof. 
\end{prf}

\begin{cor} \mlabel{cor:2.3} 
For any commutative unital continuous inverse algebra $\cA$, 
the completed projective tensor powers 
$\cA^{\hat\otimes N}$  are continuous inverse algebras. 
\end{cor}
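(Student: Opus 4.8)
The plan is to prove this by induction on $N$, using the preceding theorem as the engine and the associativity of the completed projective tensor product to reduce the $(N+1)$-fold case to a single application of the two-factor result. The base case $N = 1$ is trivial, since $\cA^{\hat\otimes 1} = \cA$ is a cia by hypothesis, and the case $N = 2$ is precisely the preceding theorem.

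For the inductive step, suppose that $\cA^{\hat\otimes N}$ is a continuous inverse algebra. The crucial point is that, to apply the theorem to the pair $(\cA^{\hat\otimes N}, \cA)$, one needs $\cA^{\hat\otimes N}$ to be \emph{commutative and unital}. Both properties are already recorded in Definition~\ref{def:3.1}(b): the completed projective tensor power $\cA^{\hat\otimes N}$ carries a natural unital commutative algebra structure, determined by componentwise multiplication of elementary tensors. Hence the inductive hypothesis in fact asserts that $\cA^{\hat\otimes N}$ is a commutative unital cia, so that it is an admissible first factor for the theorem.

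Next I would invoke the associativity of the completed projective tensor product, which gives a canonical isomorphism of locally convex algebras
\[ \cA^{\hat\otimes (N+1)} \cong \cA^{\hat\otimes N} \hat\otimes \cA. \]
This isomorphism follows from the universal property of the completed projective tensor product together with the associativity of the (uncompleted) projective tensor product, as used already in Definition~\ref{def:3.1}(b) to define the algebra structure. Applying the preceding theorem with $\cA_1 := \cA^{\hat\otimes N}$ and $\cA_2 := \cA$ then shows that the right-hand side is a continuous inverse algebra, and therefore so is $\cA^{\hat\otimes(N+1)}$. This closes the induction.

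I expect no genuine obstacle here: the entire content is carried by the two-factor theorem, and the only points requiring care are the bookkeeping ones, namely that commutativity and unitality are preserved at each stage (already supplied by Definition~\ref{def:3.1}(b)) and that the associativity isomorphism is compatible with the algebra structures. The latter is the mildest sticking point, but it is a standard property of projective tensor products and follows from the universal property characterizing $\cA^{\hat\otimes N}$ as the representing object for continuous $N$-linear maps.
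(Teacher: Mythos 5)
Your proof is correct and is essentially the argument the paper intends: the corollary is stated as an immediate consequence of the two-factor theorem, obtained by iterating it inductively, using the commutative unital algebra structure on $\cA^{\hat\otimes N}$ recorded in Definition~\ref{def:3.1}(b) together with the associativity of the completed projective tensor product. The one point you flag — compatibility of the associativity isomorphism with the algebra structures — is indeed the only bookkeeping step, and it follows from the universal property exactly as you say.
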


\subsection{Multiplicative characters}

We classify the holomorphic multiplicative characters of $\cA$. For this,
we shall need the following proposition, which is a generalization of 
the corresponding assertion concerning Banach \mbox{algebras (\cite{NS11})}:

\begin{prop}\mlabel{prop:2.4} 
If $\cA$ is a complex commutative continuous inverse algebra and 
$\Gamma  \subeq \Aut(\cA)$ a finite subgroup, 
then each character $\chi \: \cA^\Gamma \to \C$ 
extends to a character of $\cA$. 
\end{prop}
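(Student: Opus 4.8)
The plan is to exploit that $\cA$ is an integral ring extension of its fixed-point algebra $\cA^\Gamma$, and then to realize the extended character as the quotient map onto a residue field.

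First I would record the integrality. For $a \in \cA$ the monic polynomial $P_a(T) := \prod_{\gamma \in \Gamma}(T - \gamma(a))$ has coefficients equal to the elementary symmetric functions of the $\gamma(a)$, which are $\Gamma$-invariant and hence lie in $\cA^\Gamma$; since $P_a(a) = 0$, every element of $\cA$ is integral over $\cA^\Gamma$. Next, set $\fm := \ker\chi$. As $\chi$ is a unital homomorphism onto $\C$, the ideal $\fm \trianglelefteq \cA^\Gamma$ is maximal with $\cA^\Gamma/\fm \cong \C$.

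The heart of the argument is to produce a maximal ideal $\mathfrak{M} \trianglelefteq \cA$ with $\mathfrak{M} \cap \cA^\Gamma = \fm$. For this I would first show that the ideal $J := \fm\cA$ generated by $\fm$ in $\cA$ is proper. Assume $\1 = \sum_{i=1}^n k_i a_i$ with $k_i \in \fm$ and $a_i \in \cA$, and let $M$ be the $\cA^\Gamma$-subalgebra of $\cA$ generated by $a_1,\dots,a_n$; by integrality $M$ is a finitely generated $\cA^\Gamma$-module containing $\1$. Then $\fm M$ is an ideal of the ring $M$ containing $\1$, so $\fm M = M$, and the determinant (Cayley--Hamilton) trick yields an element $y = \det(\mathrm{id} - B) \in \cA^\Gamma$ with $y \equiv 1 \pmod{\fm}$ and $yM = 0$; evaluating on $\1 \in M$ gives $y = 0$, contradicting $\chi(y) = 1$. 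Hence $J \neq \cA$, and by Zorn's lemma $J$ lies in some maximal ideal $\mathfrak{M}$ of $\cA$. Since $\mathfrak{M}$ is proper, $\mathfrak{M} \cap \cA^\Gamma$ is a proper ideal containing the maximal ideal $\fm$, whence $\mathfrak{M} \cap \cA^\Gamma = \fm$.

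Finally I would identify the residue field. The field $\cA/\mathfrak{M}$ is integral over its subfield $\cA^\Gamma/\fm \cong \C$, hence is an algebraic extension of $\C$; as $\C$ is algebraically closed, $\cA/\mathfrak{M} = \C$, and the quotient map $\psi \: \cA \to \cA/\mathfrak{M} = \C$ is a character of $\cA$. Its restriction to $\cA^\Gamma$ is a unital homomorphism with kernel $\mathfrak{M} \cap \cA^\Gamma = \fm = \ker\chi$, so $\psi|_{\cA^\Gamma} = \chi$, both being determined by their common kernel on the one-dimensional quotient. Since characters of a cia are automatically continuous, $\psi$ is the desired (continuous) extension. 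The main obstacle is the properness of $J = \fm\cA$ --- equivalently the lying-over property for the integral extension $\cA^\Gamma \subseteq \cA$ --- which is precisely where the Banach-algebra argument of \cite{NS11} invoked analytic/Gelfand-theoretic input; the point of this purely algebraic route is that it requires no Banach structure at all, continuity of the extension being automatic. (Alternatively one could reduce directly to the Banach case: by Turpin's theorem \cite{Tu70} write $\cA = \varprojlim_p \cA_p$ with $\Gamma$-invariant submultiplicative seminorms $p$, so $\Gamma$ acts on each commutative Banach algebra $\cA_p$; the closure of $J$ remains proper in some $\cA_p$, where one applies \cite{NS11}.)
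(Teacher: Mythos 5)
Your proof is correct. It shares the paper's skeleton---set $\cI := \ker\chi$, show that the ideal generated by $\cI$ in $\cA$ is proper, pass to a maximal ideal $\cM \supeq \cA\cI$, identify $\cA/\cM$ with $\C$, and check that the quotient map restricts to $\chi$---but you justify the two substantive steps by genuinely different means. The paper cites \cite[Lemma~3.1]{NS11} for the properness of $\cA\cI$, and obtains $\cA/\cM \cong \C$ topologically: $\cA/\cM$ is a complex division algebra and again a cia \cite[Lemma~B.9]{Wag11}, so Arens' Gelfand--Mazur-type theorem \cite{Are47} applies. You instead prove both steps by hand and purely algebraically: integrality of $\cA$ over $\cA^\Gamma$ via $P_a(T)=\prod_{\gamma\in\Gamma}\bigl(T-\gamma(a)\bigr)$, properness (lying over) via the Cayley--Hamilton/Nakayama determinant trick, and the residue-field identification from the fact that a field algebraic over the algebraically closed field $\C$ equals $\C$. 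Your route buys self-containedness and extra generality: it shows the extension exists for an \emph{arbitrary} unital commutative complex algebra with a finite automorphism group, the cia structure entering only through the automatic continuity of characters (a fact the paper also uses, in the proof of Theorem~\ref{thm:2.5}). The paper's route buys brevity, and its residue-field step needs no integrality at all---it works for any maximal ideal of a cia. Two inessential remarks: your closing attribution of the analytic/Gelfand-theoretic input to the properness step is misplaced as far as this paper's proof is concerned---there the properness is the cited algebraic lemma of \cite{NS11}, while the analytic input sits in the residue-field step via \cite{Are47}; and your parenthetical Turpin-based alternative \cite{Tu70} is much sketchier than your main argument (e.g.\ the passage from properness of the closure in some Banach quotient back to the original extension problem is not spelled out), but nothing in your proof rests on it.
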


\begin{prf} Let $\cI \subeq \cA^\Gamma$ denote the kernel of $\chi$. 
Then $\cI$ is a proper ideal of $\cA^\Gamma$, 
and \cite[Lemma~3.1]{NS11} implies that 
$\cA \cI$ is a proper ideal of $\cA$, hence contained 
in a maximal ideal $\cM$. Now $\cA/\cM$ is a complex division algebra and a cia 
(\cite[Lemma~B.9]{Wag11}), so that \cite{Are47} 
implies that $\cA/\cM \cong \C$. Therefore the quotient map 
$\cA \to \cA/\cM$ is a character extending $\chi$. 
\end{prf}

\begin{rem} \mlabel{rem:bd} 
(a) Unfortunately, the analog of Proposition~\ref{prop:2.4} 
for involutive characters is false. Here is the minimal example. 
Let $\bD := \C^2$ denote the $2$-dimensional involutive algebra, 
where the algebra structure is given by pointwise
multiplication and $(z_1, z_2)^* := (\oline{z_2}, \oline{z_1})$. 
Then $\Gamma_\bD = \{ \chi_1, \chi_2\}$ is a $2$-element set with 
$\chi_j(z) = z_j$ and $\chi_1^*= \chi_2$. In particular, the involutive 
algebra $\bD$ has no involutive characters. However, 
$\Gamma := \Aut(\bD,*)$ is the two-element group $\{\id,\tau\}$ with 
$\tau(z_1, z_2) = (z_2, z_1)$ and 
$\bD^\Gamma = \Delta_\C$ is the diagonal subalgebra, 
on which $\chi(z,z) := z$ defines an involutive character. 

(b) An involutive cia is called {\it hermitian} if, 
for each hermitian element $a = a^*$, the spectrum 
$\sigma_\cA(a)$ is contained in $\R$, i.e., $a + z \1$ is invertible 
for $z \not\in \R$. For a hermitian cia any character 
$\chi \: \cA \to \C$ is involutive because $\chi(a) \in \R$ 
for $a = a^*$. 

(c) The algebra $\bD$  is not hermitian because the element 
$a = (i,-i)$ is hermitian with $\sigma_\bD(a) = \{ \pm i\}$. 
Note that $\chi(z) := z_1 z_2$ is a multiplicative 
character $(\bD,\cdot) \to (\C,\cdot)$ 
satisfying $\chi(a^*) = \oline{\chi(a)}$ for 
every $a \in \bD$. 
\end{rem}

\begin{theorem}\mlabel{thm:2.5} 
Let $\cA$ be a commutative unital continuous inverse algebra. 
Then, for every continuous polynomial multiplicative map $\phi: \cA \rightarrow (\C,\cdot)$ 
of degree $N$, 
there exist finitely many continuous algebra homomorphisms 
$\chi_1, \ldots, \chi_N \: \cA \to \C$ such that
$\varphi=\chi_1 \cdots \chi_N.$ 
\end{theorem}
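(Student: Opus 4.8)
The plan is to reduce first to the case where $\phi$ is \emph{homogeneous} of degree $N$, and then to realize $\phi$ as the restriction to the diagonal of a character of the tensor power $\cA^{\hat\otimes N}$, which by the results on tensor products factors as a product of $N$ characters of $\cA$.

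First I would decompose the continuous polynomial $\phi$ into its homogeneous components $\phi = \sum_{k=0}^N \phi_k$, where each $\phi_k$ is continuous and homogeneous of degree $k$. Substituting $a \mapsto sa$ and $b \mapsto tb$ into the multiplicativity relation $\phi(sa)\phi(tb) = \phi(st\,ab)$ and comparing coefficients of $s^k t^l$ (as a polynomial identity in $s,t \in \C$) yields $\phi_k(a)\phi_l(b) = 0$ for all $a,b$ whenever $k \ne l$, together with $\phi_k(ab) = \phi_k(a)\phi_k(b)$ when $k = l$. The first relation forces all but one of the $\phi_k$ to vanish identically; since $\phi$ has degree $N$ (and $\phi(\1)^2 = \phi(\1)$ excludes the trivial case $\phi \equiv 0$), the surviving component is $\phi_N$, so that $\phi = \phi_N$ is homogeneous of degree $N$ and multiplicative. (The case $N = 0$ gives $\phi \equiv 1$, the empty product.)

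Next I would pass to the symmetric power. By polarization, $\phi$ corresponds to a continuous symmetric $N$-linear form $P \: \cA^N \to \C$ with $\phi(a) = P(a,\dots,a)$, and the universal property of the completed projective tensor product (Definition~\ref{def:3.1}) turns $P$ into a continuous linear functional $\hat P$ on $\cA^{\hat\otimes N}$, which is $S_N$-invariant by symmetry of $P$. Restricting $\hat P$ to $\oline S^N(\cA) = (\cA^{\hat\otimes N})^{S_N}$ produces a continuous linear functional $\tilde\phi$ with $\tilde\phi(a^{\vee N}) = \phi(a)$. The key observation is that $\tilde\phi$ is in fact a \emph{character} of $\oline S^N(\cA)$: since $a^{\vee N} b^{\vee N} = (ab)^{\vee N}$ in $\cA^{\hat\otimes N}$, multiplicativity of $\phi$ gives $\tilde\phi(a^{\vee N} b^{\vee N}) = \tilde\phi(a^{\vee N})\tilde\phi(b^{\vee N})$ on the diagonal elements; as these form a multiplicatively closed set that topologically generates $\oline S^N(\cA)$, expanding a general product bilinearly and using continuity shows that $\tilde\phi$ is multiplicative throughout.

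Finally I would extend and factor. By Corollary~\ref{cor:2.3}, $\cA^{\hat\otimes N}$ is a cia, and by Example~\ref{ex:cia}(f) its fixed-point algebra $\oline S^N(\cA)$ under the finite group $S_N$ is again a cia, so Proposition~\ref{prop:2.4} applies and extends $\tilde\phi$ to a character $\hat\phi$ of $\cA^{\hat\otimes N}$. By the identification of $\Gamma_{\cA^{\hat\otimes N}}$ with $(\Gamma_\cA)^N$ obtained in the proof of the tensor-product theorem (cf.\ Corollary~\ref{cor:2.3}), we have $\hat\phi = \chi_1 \otimes \cdots \otimes \chi_N$ for characters $\chi_j \: \cA \to \C$. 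Evaluating on the diagonal then gives $\phi(a) = \tilde\phi(a^{\vee N}) = \hat\phi(a \otimes \cdots \otimes a) = \chi_1(a)\cdots\chi_N(a)$, the claimed factorization. I expect the steps needing the most care to be the two ``new'' cia inputs that replace the Banach-algebra arguments of \cite{NS11} — namely that $\cA^{\hat\otimes N}$ and $\oline S^N(\cA)$ remain cias (so that characters are automatically continuous and extend) and that characters of the completed tensor power are precisely $N$-tuples of characters of $\cA$ — but both are furnished by the earlier results, leaving the homogeneity reduction and the diagonal/character identification as the genuinely load-bearing parts of the argument.
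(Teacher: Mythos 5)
Your proof is correct and follows essentially the same route as the paper's: polarization turns $\phi$ into a linear functional on $\oline S^N(\cA)$ that is multiplicative on the diagonal generators $a^{\vee N}$ (hence a character), Proposition~\ref{prop:2.4} together with Corollary~\ref{cor:2.3} extends it to a character of $\cA^{\hat\otimes N}$, and restricting along the unital embeddings $a \mapsto \1^{\otimes(i-1)}\otimes a \otimes \1^{\otimes(N-i)}$ produces the factors $\chi_1,\ldots,\chi_N$. Your explicit reduction to the homogeneous case is the one point the paper leaves implicit in this proof (it surfaces only in the proof of Corollary~\ref{cor:4.9}), so it is a correct and useful supplement rather than a different approach.
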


\begin{proof} Write 
$\phi(a) = \psi(a \vee \cdots \vee a)$ for a continuous linear map 
$\psi \: S^N(\cA) \to \C.$ 
For the diagonal generators of $S^N(\cA)$ 
(cf.\ Definition~\ref{def:3.1}), we then have 
\[ \psi( a^{\vee N} b^{\vee N}) 
= \psi( (ab)^{\vee N}) 
= \phi(ab) = \phi(a) \phi(b) 
=  \psi( a^{\vee N}) \psi(b^{\vee N}).\] 
From the linearity of $\psi$ and its multiplicativity 
on a set of topological  linear generators, it now follows that $\psi$ is an algebra homomorphism. 

The continuity of $\psi$ further implies that it 
extends to a continuous linear map on the completion 
$\oline S^N(\cA)$, which is the fixed point algebra for the canonical 
$S_N$-action on the completion of 
$\cA^{\otimes N}$.
From Corollary~\ref{cor:2.3} we know that this completion is a continuous 
inverse algebra, so that Proposition~\ref{prop:2.4} shows that 
$\psi$ extends to a continuous character 
$\chi: \cA^{\otimes N} \rightarrow \C$.
Then
\[ \phi(a)=\chi(a \otimes \cdots \otimes a)
=\prod_{i = 1}^n \chi(\1^{\otimes i-1} \otimes a \otimes \1^{\otimes n-i}) 
=\chi_1(a)\cdots \chi_N(a),\] 
where $\chi_i \: \cA \rightarrow \C$ is the character
$\chi_i(a):=\chi(\1^{\otimes i-1} \otimes a \otimes \1^{\otimes n-i})$.  
Since every character of $\cA$ is automatically continuous (because $\cA^\times$ is open), 
this proves the theorem.
\end{proof}

\begin{cor} \mlabel{cor:4.9} 
If $\cA$ is a complex unital cia and 
$\phi \: (\cA,\cdot) \to \C$ holomorphic and multiplicative, then 
there exist finitely many continuous algebra homomorphisms 
$\chi_j \: \cA \to \C$ such that $\varphi=\chi_1 \cdots \chi_N.$ 
\end{cor}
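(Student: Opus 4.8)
The plan is to reduce Corollary~\ref{cor:4.9} to Theorem~\ref{thm:2.5} by showing that a holomorphic multiplicative map $\phi \: (\cA,\cdot) \to \C$ is automatically a continuous homogeneous polynomial of some finite degree $N$. The entire content is to produce this integer $N$, after which Theorem~\ref{thm:2.5} applies verbatim to furnish the characters $\chi_1,\dots,\chi_N$.

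First I would restrict $\phi$ to the scalar multiples of the unit. Since $t \mapsto t\1$ is a linear, hence holomorphic, map $\C \to \cA$ and $(t\1)(s\1) = (ts)\1$, the composite $f(t) := \phi(t\1)$ is a holomorphic multiplicative function $\C \to \C$. From $f(1) = f(1)^2$ one sees that either $\phi \equiv 0$ (the degenerate case, where $\phi(\1) = 0$ already forces $\phi(a) = \phi(a\1) = \phi(a)\phi(\1) = 0$ for all $a$), or $f(1) = 1$. In the latter case, writing $f(t) = t^N g(t)$ with $g$ holomorphic and $g(0) \neq 0$ and using that $g$ is again multiplicative forces $g \equiv 1$, so that $f(t) = t^N$ for a unique $N \in \N_0$.

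The key step is then the scaling identity. For arbitrary $a \in \cA$ and $t \in \C$, multiplicativity gives
\[ \phi(ta) = \phi\big((t\1)\,a\big) = \phi(t\1)\,\phi(a) = t^N\,\phi(a),\]
so $\phi$ is homogeneous of degree $N$. A holomorphic function on a locally convex space admits a Taylor expansion $\phi = \sum_{k \geq 0}\hat\phi_k$ into continuous homogeneous polynomials $\hat\phi_k$ of degree $k$, convergent near $0$; comparing the $k$-homogeneous components of $\phi(t\,\cdot\,) = t^N\phi$ and invoking uniqueness of these components shows $\hat\phi_k = 0$ for $k \neq N$. Hence $\phi = \hat\phi_N$ is a continuous polynomial multiplicative map of degree $N$, and Theorem~\ref{thm:2.5} yields continuous algebra homomorphisms $\chi_1,\dots,\chi_N \: \cA \to \C$ with $\phi = \chi_1 \cdots \chi_N$.

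I expect the only genuine obstacle to be the homogeneity step, i.e.\ the passage from \emph{holomorphic} to \emph{polynomial of finite degree}: the identity $\phi(ta) = t^N\phi(a)$ does all the work, and the remaining collapse of the Taylor series to a single homogeneous piece is a standard fact about holomorphic maps on locally convex spaces. The truly degenerate case $\phi \equiv 0$ is not a product of (necessarily nonzero) characters and should be excluded by convention, exactly as in Theorem~\ref{thm:2.5}, while the case $N = 0$ corresponds to $\phi \equiv 1$ and the empty product.
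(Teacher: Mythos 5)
Your proof is correct and follows essentially the same route as the paper's: restrict $\phi$ to $\C\1$ to see that $\phi(z\1) = z^N$, deduce the homogeneity $\phi(za) = z^N\phi(a)$, observe that the Taylor expansion in homogeneous polynomials then collapses to its single degree-$N$ term, and apply Theorem~\ref{thm:2.5}. Your explicit handling of the degenerate case $\phi \equiv 0$ and of $N = 0$ merely spells out what the paper leaves implicit.
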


\begin{prf} Since $\phi$ is holomorphic, its restriction to the subalgebra $\C \1$ 
is holomorphic and multiplicative, hence of the form 
$\phi(z\1) = z^n$ for some $n \in \N_0$. This implies that 
$\phi(za) = z^n \phi(a)$ for $z\in \C, a \in \cA$.
Now $\phi$ has a Taylor expansion in homogeneous
polynomials (cf.~\cite{Ne06}), and the only nonzero term is
the one of degree $n$. Hence the preceding theorem applies.
\end{prf}

\begin{ex} If $X$ is a compact manifold, 
then $\cA = C^\infty(X)$ is a unital continuous 
inverse algebra with spectrum $\Gamma_\cA = X$ (Example~\ref{ex:cia}(b)). 
As the completed tensor powers are given by $\cA^{\hat\otimes n} 
\cong C^\infty(X^n)$ 
(\cite[Thm.~4]{Ma02}) and the $S_n$-action on this algebra is induced by the natural 
action of $S_n$ on $X^n$, the algebra $S^n(\cA)$ consists of the 
smooth symmetric functions on $X^n$. In particular, its spectrum is the quotient 
$X^n/S_n$, which can be identified with the set of $n$-element multisubsets of $X$. 
As the simple example $X = \R$ already shows, this space is not a smooth manifold. 
\end{ex}

\section{Irreducible \texorpdfstring{$*$}{*}-representations of 
\texorpdfstring{$\g \otimes \cA$}{g(A)}} 
\mlabel{sec:2}

Let $\fk$ be a compact semisimple Lie algebra and 
$\g := \fk_\C$ its complexification. In this section we develop an 
analog of the classical Cartan--Weyl theory of highest weight representations 
for bounded irreducible unitary representations of $\fk_\cA := \fk \otimes_\R \cA_\R$.
Here and in the remainder of this section, $\cA$ is an involutive 
unital commutative continuous inverse algebra.
Our main 
result is a classification of the irreducible bounded unitary representations of 
$\fk_\cA$ as finite tensor products of evaluation representations. 

\subsection{Triangular decomposition}

We write $x \mapsto \oline x$ for the complex conjugation on $\g=\fk_\C$ 
and put $x^* := - \oline x$. We then have 
$\fk = \{ x \in \g\: x^* = -x \}.$
If $\ft \subeq \fk$ is maximal abelian, then 
$\fh := \ft_\C$ is a Cartan subalgebra. Let $\Delta \subeq \fh^*$ be 
the corresponding root system, so that we have the root decomposition 
\begin{eqnarray*}
\g = \mathfrak{h} \oplus \bigoplus_{\alpha \in \Delta} \g_{\alpha}.
\end{eqnarray*}
Note that $\oline{\fh} = \fh$ and $\oline{\g_\alpha} = \g_{-\alpha}$. 
We write $\check \alpha \in \fh$ for the coroot associated to 
$\alpha \in \Delta$, i.e., the unique element $\check \alpha
 \in [\g_\alpha, \g_{-\alpha}]$ with $\alpha(\check \alpha) = 2$. 
Fix a positive system $\Delta^+$, and let
$\Pi = \{ \alpha_1,\ldots, \alpha_r\}$ denote the corresponding 
simple roots. 

Then 
$\g(\mathcal{A}):=\g \otimes_\C \mathcal{A},$ 
equipped with Lie bracket 
\begin{eqnarray*}
\left[x_1 \otimes a_1, x_2 \otimes a_2\right]:=[x_1,x_2] \otimes a_1a_2
\end{eqnarray*}
is a locally convex Lie algebra with respect to the natural tensor product 
topology, for which $\g(\cA) \cong \cA^{\dim \g}$ as a topological vector space. 
The antilinear antiautomorphisms $*$ of $\g$ and $\cA$ combine to the 
antilinear antiautomorphism of $\g(\cA)$, defined by 
\[ (x \otimes a)^* := x^* \otimes a^*.\] 
The Lie algebra $\fk_{\cA} = \fk \otimes_\R \cA_\R$ 
is the corresponding real form;
$ \fk_\cA = \{ z \in \g(\cA) \: z^* = - z \}$. 
We define\footnote{\label{note1} 
In view of the difference in sign conventions \label{footnotepage}
for holomorphic induction and 
highest weight representations, 
we define
$\fg^+$ to be the span of the root spaces corresponding to \emph{negative} roots.} 
\[ \fg^\pm := \sum_{\alpha \in \mp \Delta^+} (\g_\alpha \otimes \cA) 
\quad \mbox{ and } \quad 
\g^0 := \fh \otimes \cA \] 
to obtain the triangular decomposition 
\[ \g(\cA) = \fg^+ \oplus \g^0 \oplus \fg^-.\] 

\subsection{Inducible functionals}
Bounded $*$-representations correspond
to so-called inducible functionals. 
In this section, we will classify the inducible 
functionals of $\fg(\cA)$.

\begin{defn} \mlabel{def:bsrep}

(a) For a real topological Lie algebra $\fu$, a {\it bounded unitary representation} 
is a pair $(\pi, \cH)$, where $\cH$ is a complex Hilbert space and 
$\pi \: \fu \to \fu(\cH)$ a continuous homomorphism of Lie algebras. 

(b) If $(\g, *)$ is a complex topological Lie algebra and 
$x \mapsto x^*$ a continuous antilinear involutive antiisomorphism, then 
a {\it bounded $*$-representation of $\g$} is a pair $(\pi, \cH)$, 
where $\cH$ is a complex Hilbert space and 
$\pi \: \g \to B(\cH) = \gl(\cH)$ a continuous homomorphism of Lie algebras with 
$\pi(x^*) = \pi(x)^*$ for $x \in \g$. 
Then the restriction to the real form 
$\fu := \{ x \in\g \: x^* = - x\}$ is a bounded unitary representation. 
Conversely, the complex linear extension of every bounded 
unitary representation 
of $\fu$ to $\g$ is a bounded $*$-representation. 
\end{defn}

\begin{prop} \mlabel{prop:6.1} Let $(\pi, \cH)$ be a bounded $*$-representation 
of $\g(\cA)$. Then the following assertions hold: 
\begin{description}
\item[\rm(i)] $\cE := \cH^{\fg^-} = (\pi(\fg^+)\cH)^\bot$ carries a
$*$-representation $\rho$ of the commutative subalgebra $\g^0 = \fh \otimes \cA$. 
\item[\rm(ii)] $\cE$ generates the $\g(\cA)$-module $\cH$. 
\item[\rm(iii)] There exists an $N \in \N$ with $\pi(\fg^+)^N 
= \pi(\fg^-)^N = \{0\}$. 
\item[\rm(iv)] The restriction map 
$R \: \pi(\g(\cA))' \to \rho(\g^0)', B \mapsto B\res_{\cE}$ is an isomorphism 
of von Neumann algebras. 
\item[\rm(v)] $(\pi, \cH)$ is irreducible if and only if 
$\dim \cE = 1$. 
\end{description}
\end{prop}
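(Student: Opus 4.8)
The plan is to treat the five assertions in the order (i), (iii), (ii), (iv), (v), since (ii) and (iv)--(v) all rest on the nilpotency in (iii). First I would record the single structural fact that drives everything: the involution interchanges the two nilpotent parts, $(\fg^+)^* = \fg^-$. Indeed $(\g_\alpha \otimes a)^* = \g_{-\alpha}\otimes a^*$ and $-\Delta^+ = \Delta^-$, so $*$ maps $\fg^+$ onto $\fg^-$. From this, for $v \in \cH$ one has $v \in (\pi(\fg^+)\cH)^\bot$ iff $\langle \pi(\eta^*)v, w\rangle = 0$ for all $\eta \in \fg^+$ and $w \in \cH$, i.e. iff $\pi(\fg^-)v = 0$; this is the asserted identity $\cE = \cH^{\fg^-} = (\pi(\fg^+)\cH)^\bot$ in (i). That $\cE$ is $\g^0$-invariant follows from $[\g^0, \fg^-]\subseteq \fg^-$: if $\pi(\fg^-)v = 0$ and $h\in\g^0$, then $\pi(\xi)\pi(h)v = \pi(h)\pi(\xi)v + \pi([\xi,h])v = 0$. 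Since $\g^0$ is $*$-stable, $\rho := \pi|_{\g^0}$ restricts to a $*$-representation on $\cE$, giving (i).

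The crux is (iii), and this is the step where boundedness is indispensable. I would pick $h_0 \in i\ft$ with $\alpha(h_0) \ge c > 0$ for every $\alpha \in \Delta^+$ (possible since the positive roots are real-valued on $i\ft$ and admit a common strictly positive element), and set $H := \pi(h_0\otimes \1)$. A short sign check gives $h_0^* = h_0$, so $H$ is a \emph{bounded} self-adjoint operator. For a root vector $x\otimes a$ one has $\ad(H)\pi(x\otimes a) = \alpha(h_0)\pi(x\otimes a)$, so a product of $N$ operators coming from $\fg^-$ is an $\ad(H)$-eigenvector with eigenvalue $\ge Nc$. Since $\|\ad(H)\|\le 2\|H\|$ on $B(\cH)$, no nonzero eigenvector can have eigenvalue exceeding $2\|H\|$; hence any such product vanishes as soon as $N > 2\|H\|/c$. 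As $\fg^\pm$ are spanned by root vectors, this yields $\pi(\fg^+)^N = \pi(\fg^-)^N = 0$ for $N := \lfloor 2\|H\|/c\rfloor + 1$.

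Assertion (ii) then drops out: in any nonzero closed invariant subspace $\cK$, starting from $0\neq v\in\cK$ and applying elements of $\fg^-$, nilpotency forces a longest nonzero word $w = \pi(\xi_1)\cdots\pi(\xi_m)v \in \cK$ with $\pi(\fg^-)w = 0$, so $0\neq w \in \cK\cap\cE$. Applying this to $\cK = \overline{\cU(\g(\cA))\cE}$ (if it were proper, its invariant orthocomplement would meet $\cE$, contradicting $\cE\subseteq\cK$) shows $\cE$ generates $\cH$; applying it to $\cK=\cH$ shows $\cE\neq 0$, which I will need in (v).

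For (iv), the map $R(B) = B|_\cE$ is a well-defined $*$-homomorphism because every $B\in\pi(\g(\cA))'$ preserves $\cE$ and commutes with $\rho(\g^0)$, and it is injective by (ii). The real work is surjectivity. I would use the contravariant-form identity $\langle \pi(w_1)v_1, \pi(w_2)v_2\rangle = \langle v_1, \rho(\Pi_0(w_1^* w_2))v_2\rangle$ for $v_i\in\cE$, where $\Pi_0 \: \cU(\g(\cA)) \to \cU(\g^0)$ is the Harish--Chandra projection along $\fg^-\cU(\g(\cA)) + \cU(\g(\cA))\fg^+$ (using PBW together with $\pi(\fg^-)\cE = 0$ and $(\fg^+)^* = \fg^-$). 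Given $C\in\rho(\g^0)'$, I would define $B(\pi(w)v) := \pi(w)Cv$; well-definedness follows because $C$ and $C^*$ commute with every $\rho(\Pi_0(\cdots))$, and boundedness (with $\|B\|\le\|C\|$) follows because the Gram matrix $[\rho(\Pi_0(w_i^* w_j))]$ is positive and commutes with $C\oplus\cdots\oplus C$. This $B$ lies in $\pi(\g(\cA))'$ and restricts to $C$, so $R$ is a $*$-isomorphism of von Neumann algebras; I expect \emph{this surjectivity and boundedness estimate to be the main obstacle}. Finally (v): via $R$, irreducibility is equivalent to $\rho(\g^0)' = \C I_\cE$; since $\g^0$ is abelian and $*$-stable, $\rho(\g^0)$ is a commutative self-adjoint family, so $\rho(\g^0)''\subseteq\rho(\g^0)'$ is commutative, and $\rho(\g^0)'=\C I$ gives $\rho(\g^0)''=B(\cE)$, which, being commutative, forces $\dim\cE\le 1$; together with $\cE\neq 0$ this yields $\dim\cE = 1$, the converse being immediate.
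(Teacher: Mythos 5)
Your proof is correct, but on the two load-bearing steps it takes a genuinely different route from the paper. For the nilpotency (iii), the paper restricts $\pi$ to the finite-dimensional subalgebra $\g \cong \g \otimes \1$, integrates it to a holomorphic representation of the $1$-connected group, and uses boundedness to split $\cH$ into finitely many weight spaces for the torus $\exp(\ft)$; both $\pi(\fg^-)^N = \{0\}$ and $\cE \neq \{0\}$ (hence (ii)) drop out of the finiteness of that decomposition. Your argument stays entirely on the operator level: products of $N$ root operators from $\fg^-$ are $\ad(H)$-eigenvectors with eigenvalue at least $Nc$, while every eigenvalue of the bounded derivation $\ad(H)$ has modulus at most $2\|H\|$. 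This is more elementary (no integration to the group, no weight theory) and even gives the explicit bound $N \leq 2\|H\|/c + 1$; you then recover (ii) from nilpotency by the longest-nonzero-word argument, which matches the paper's reduction via orthogonal complements of invariant subspaces. For surjectivity in (iv), the paper lifts only \emph{projections} $P \in \rho(\g^0)'$: it forms $\cH_0 = \overline{U(\g(\cA))P\cE}$, shows $\cH_0 \cap \cE = P\cE$ using $U(\g(\cA)) = U(\fg^+)U(\g^0)U(\fg^-)$, and then invokes the facts (cited from Dixmier) that von Neumann algebras are generated by their hermitian projections and that restriction images of von Neumann algebras are von Neumann algebras. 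You instead extend an \emph{arbitrary} $C \in \rho(\g^0)'$ directly through the contravariant-form identity and the positivity of the Gram matrix $\bigl[\rho(\Pi_0(w_i^* w_j))\bigr]$, which yields well-definedness and $\|B\| \leq \|C\|$ in one stroke (since $C$ and $C^*$ commute with the Gram matrix, so does its square root, and the estimate follows). Your version is more self-contained and proves more per step; the paper's is shorter because it outsources the von Neumann algebra theory.

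One small correction: with the paper's sign conventions, where $\fg^-$ annihilates $\cE$ and $\pi(\fg^+)\cH \perp \cE$, the Harish--Chandra projection you need is the one along $\fg^+ U(\g(\cA)) + U(\g(\cA))\fg^-$, not along $\fg^- U(\g(\cA)) + U(\g(\cA))\fg^+$ as you wrote: it is the terms with a $\fg^-$-factor on the \emph{right} that kill $v_2 \in \cE$, and the terms with a $\fg^+$-factor on the \emph{left} whose image is orthogonal to $v_1 \in \cE$. Both subspaces are complements of $U(\g^0)$ by PBW, so this is a harmless transposition and the rest of your argument goes through verbatim, but as literally stated the identity $\langle \pi(w_1)v_1, \pi(w_2)v_2\rangle = \langle v_1, \rho(\Pi_0(w_1^* w_2))v_2\rangle$ would fail.
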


\begin{prf} (i) The relation $(\fg^+)^* = \fg^-$ implies that 
$\pi(\fg^+)^* = \pi(\fg^-)$, which leads to 
\[ \cH^{\fg^-} = \ker(\pi(\fg^-)) = (\pi(\fg^+)\cH)^\bot.\] 
Further, $[\g^0, \g^-] \subeq \g^-$ implies that 
this closed subspace is invariant under $\g^0$. 

(ii) Since, for every $\g(\cA)$-invariant subspace 
$\cK \subeq \cH$, the orthogonal complement $\cK^\bot$ is also invariant, 
it suffices to show that any non-zero invariant subspace 
$\cK$ intersects $\cE$ non-trivially. This amounts to showing that, 
if $\cH$ is non-zero, then $\cE$ is non-zero. 

To this end, we integrate the representation of the 
finite-dimensional involutive Lie algebra 
$\pi \: \g \to \gl(\cH)$ to a holomorphic representation 
of the corresponding $1$-connected group $\pi_G \: G \to \GL(\cH)$. 
Let $\ft := \fk \cap \fh$. Then $T := \exp_G(\ft)$ is a torus, 
so that the boundedness of $\pi$ implies that $\cH$ 
decomposes into  finitely many $\fh$-weight spaces 
\[ \cH = \bigoplus_{\beta \in \fh^*} \cH_\beta.\] 
From the relation $\g_\alpha(\cA) \cH_\beta \subeq \cH_{\beta + \alpha}$ 
and the finiteness of the decomposition of $\cH$, we derive the existence 
of a minimal $N \in \N$ with $\pi(\fg^-)^N = \{0\}$.  
Then $\pi(\fg^-)^{N-1}\cH$ is non-zero and contained in~$\cE$. 

(iii) We have already seen that 
$\pi(\fg^-)^N = \{0\}$, so that (ii) follows from 
$\pi(\fg^+) = \pi(\fg^-)^*$. 

(iv) Since $\pi(\g(\cA))'$ commutes in particular with 
$\pi(\fg^-)$, it leaves the subspace $\cE$ invariant, so that 
$R$ is well-defined. Since $\cE$ generates the $\g(\cA)$-module $\cH$, 
the map $R$ is injective. To see that it is surjective, 
it suffices to show that its range contains all projections 
of $\rho(\g^0)'$. Here we use that 
each von Neumann algebra is generated by hermitian  projections 
(\cite[Chap.~1, \S 1.2]{Dix96}) 
and that images of von Neumann algebras under restriction maps 
are von Neumann algebras (\cite[Chap.~1, \S 2.1, Prop.~1]{Dix96}). 
So let $P = P^* =P^2 \in \rho(\g^0)'$ be a hermitian 
projection and $\cE_0 := P(\cE)$ be its range. 

If we denote by $U(\fg(\cA))$ the universal enveloping algebra, then
 $\cH_0 := \oline{U(\g(\cA))\cE_0}$ is the closed $\g(\cA)$-invariant 
subspace generated by $\cE_0$. From 
$U(\g(\cA)) = U(\fg^+)U(\g^0)U(\fg^-)$ we derive that 
\[ \cH_0 \subeq \oline{U(\fg^+)\cE_0} 
\subeq \cE_0 + \pi(\fg^+)\cH
\subeq \cE_0 + \cE^\bot,\] 
which implies that $\cH_0 \cap \cE = \cE_0$. 
We conclude that the orthogonal projection 
$\tilde P \: \cH \to \cH_0$, which is contained in $\pi(\g(\cA))'$, 
satisfies $\tilde P\res_{\cE} = P$. Therefore $R$ is surjective. 

(v) In view of Schur's Lemma, $(\pi, \cH)$ is irreducible if and only if 
$\pi(\g(\cA))' = \C \1$. According to (iv), this is equivalent to 
the irreducibility of $(\rho, \cE)$. As $\g^0$ is abelian, this is equivalent 
to $\dim \cE = 1$. 
\end{prf}

\begin{rem} (Disintegration of bounded representations) 
For every bounded $*$-representation $(\pi, \cH)$ of $\g(\cA)$, 
every irreducible representation of the $C^*$-algebra $\cA := C^*(\pi(\g(\cA)))$ 
generated by $\pi(\g(\cA))$   
defines an irreducible bounded $*$-representations of 
$\g(\cA)$. This implies that $\pi$ is a direct 
integral of irreducible ones (cf.\ \cite{Dix77} in the separable case and 
\cite{He82} for inseparable representations). We therefore understand 
the structure of bounded $*$-representations if we know the irreducible 
representations. 
\end{rem}

\begin{defn} We call an involutive  linear functional 
$\lambda \: \g^0 \to \C$ {\it inducible} 
if there exists a bounded $*$-representation 
$(\pi, \cH)$ of $\g(\cA)$ with 
$(\rho, \cE) \cong (\lambda,\C)$, i.e., 
if $\lambda$ occurs as the $\g^0$-weight on $\cE$ for some 
irreducible bounded representation of~$\g(\cA)$.
\end{defn} 

\begin{lem}\label{findimeq} If $(\pi_1, \cH_1)$ and $(\pi_2, \cH_2)$ are 
irreducible representations with $\g^0$-weights 
$\lambda_1$ and $\lambda_2$ on $\cE$, 
then $\pi_1 \cong \pi_2$ if and only if $\lambda_1 = \lambda_2$. 
\end{lem}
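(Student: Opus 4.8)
The plan is to prove the two implications separately, exploiting the intrinsic nature of the construction in Proposition~\ref{prop:6.1}. The forward direction is essentially formal: if $U \colon \cH_1 \to \cH_2$ is a unitary intertwiner, then because $\cE_i = \cH_i^{\fg^-} = \ker \pi_i(\fg^-)$ is defined intrinsically from $\pi_i$, the operator $U$ restricts to a unitary isomorphism $\cE_1 \to \cE_2$ intertwining the $\g^0$-representations $\rho_1$ and $\rho_2$. By irreducibility and Proposition~\ref{prop:6.1}(v) these are the one-dimensional representations attached to $\lambda_1$ and $\lambda_2$, so the intertwining forces $\lambda_1 = \lambda_2$.

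For the reverse direction I would assume $\lambda_1 = \lambda_2 =: \lambda$ and form the bounded $*$-representation $\pi := \pi_1 \oplus \pi_2$ on $\cH := \cH_1 \oplus \cH_2$. Since $\fg^-$ acts componentwise, I would check that $\cE := \cH^{\fg^-} = \cE_1 \oplus \cE_2$, which is two-dimensional because each $\cE_i$ is one-dimensional by Proposition~\ref{prop:6.1}(v). As $\g^0$ acts by the \emph{same} scalar functional $\lambda$ on both summands, the representation $\rho$ of $\g^0$ on $\cE$ satisfies $\rho(\g^0) \subeq \C\1_\cE$, whence $\rho(\g^0)' = B(\cE) \cong M_2(\C)$. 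Applying the von Neumann algebra isomorphism of Proposition~\ref{prop:6.1}(iv) to $\pi$ then yields $\pi(\g(\cA))' \cong \rho(\g^0)' \cong M_2(\C)$.

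Finally I would read off the equivalence from the commutant. Writing an element of $\pi(\g(\cA))'$ in block form relative to $\cH_1 \oplus \cH_2$ and invoking Schur's Lemma for the irreducible $\pi_i$, the diagonal blocks are scalars and the off-diagonal blocks are intertwiners between $\pi_1$ and $\pi_2$; hence $\pi(\g(\cA))'$ is four-dimensional exactly when $\pi_1 \cong \pi_2$, and otherwise it is the two-dimensional commutative algebra $\C \oplus \C$. Since the previous step identified it with the noncommutative four-dimensional algebra $M_2(\C)$, the representations must be equivalent. The step I expect to require the most care is the verification that $\cE = \cE_1 \oplus \cE_2$ with $\rho$ genuinely scalar, since it is precisely the coincidence $\lambda_1 = \lambda_2$ that makes the commutant noncommutative and thereby forces equivalence; everything else is a routine application of Proposition~\ref{prop:6.1} and Schur's Lemma.
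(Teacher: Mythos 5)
Your proof is correct and follows essentially the same route as the paper: the forward direction restricts an intertwiner to $\cE_1 \to \cE_2$, and the reverse direction forms $\pi_1 \oplus \pi_2$, computes $\rho(\g^0)' \cong M_2(\C)$, transfers this to $\pi(\g(\cA))'$ via Proposition~\ref{prop:6.1}(iv), and rules out inequivalence by Schur's Lemma, which would force the commutant to be $\C^2$. Your block-matrix elaboration of the last step is just a more explicit phrasing of the paper's argument.
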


\begin{prf} Suppose that $\lambda_1 = \lambda_2 = \lambda$. 
Consider the direct sum representation 
$\pi := \pi_1 \oplus \pi_2$ on 
${\cH := \cH_1 \oplus \cH_2}$. 
Then $\cE = \cE_1 \oplus \cE_2$ is $2$-dimensional with 
$\rho(x)(v,w) = (\lambda(x)v, \lambda(x)w)$. 
In particular, we obtain $\rho(\g^0)' \cong M_2(\C)$ 
for the commutant. We conclude that $\pi(\g(\cA))' \cong M_2(\C)$ 
(Proposition~\ref{prop:6.1}(iv)). 
If the representations $\pi_1$ and $\pi_2$ were not equivalent, 
then we would have obtained $\pi(\g(\cA))' \cong \C^2$ by Schur's Lemma. 
This 
shows that $\pi_1 \cong \pi_2$. 
Conversely, suppose that $\pi_1 \simeq \pi_2$. Then the 
intertwiner $U \: \cH_1 \rightarrow \cH_2$ 
restricts to an intertwiner
$U|_{\cE_1} \: \cE_1 \rightarrow \cE_2$
of the one-dimensional representations $\rho_1$
and $\rho_2$, so \mbox{that $\lambda_1 = \lambda_2$}. 
\end{prf}

\begin{defn} In view of the preceding lemma, we write 
$(\pi_\lambda, \cH_\lambda)$ for the unique irreducible 
representation with $(\rho, \cE) \cong (\lambda,\C)$. 
We call $\lambda$ the {\it highest weight of $\pi_\lambda$}. 
\end{defn}

\begin{lem} \mlabel{lem:sum} 
If $\lambda$ and $\mu$ are inducible, then so is their sum 
$\lambda + \mu$. 
\end{lem}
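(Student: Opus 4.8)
The plan is to realize $\lambda+\mu$ as the highest weight of an irreducible bounded $*$-representation constructed inside the tensor product of $\pi_\lambda$ and $\pi_\mu$. First I would fix highest weight vectors: by Proposition~\ref{prop:6.1}(v) the spaces $\cE_\lambda := \cH_\lambda^{\fg^-}$ and $\cE_\mu := \cH_\mu^{\fg^-}$ are one-dimensional, say $\cE_\lambda = \C v_\lambda$ and $\cE_\mu = \C v_\mu$, with $\g^0$ acting through $\lambda$ on $v_\lambda$ and through $\mu$ on $v_\mu$. On the Hilbert space tensor product $\cH := \cH_\lambda \hat\otimes \cH_\mu$ I would define
\[ \pi(z) := \pi_\lambda(z) \otimes \1 + \1 \otimes \pi_\mu(z), \qquad z \in \g(\cA). \]
Each $\pi(z)$ is bounded, the map $z \mapsto \pi(z)$ is continuous, a direct computation gives $\pi([z,w]) = [\pi(z),\pi(w)]$, and $\pi(z^*) = \pi(z)^*$, so $(\pi,\cH)$ is again a bounded $*$-representation of $\g(\cA)$.

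Next I would set $v := v_\lambda \otimes v_\mu \neq 0$. Since $\fg^-$ annihilates both $v_\lambda$ and $v_\mu$, it annihilates $v$, and for $z \in \g^0$ one computes $\pi(z)v = (\lambda(z)+\mu(z))\,v$. Hence $v \in \cE := \cH^{\fg^-}$ and $\g^0$ acts on $\C v$ through $\lambda+\mu$. The remaining task is to extract an irreducible subrepresentation whose $\cE$-space is exactly $\C v$. For this I would exploit the $\fh$-weights: identifying $\fh \cong \fh \otimes \1 \subseteq \g^0$, the proof of Proposition~\ref{prop:6.1}(ii) shows that $\cH_\lambda$ and $\cH_\mu$ are finite orthogonal sums of $\fh$-weight spaces. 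Since $\fg^+$ lowers the $\fh$-weight by a positive root and $\cH_\lambda = \oline{U(\fg^+)v_\lambda}$, the top $\fh$-weight of $\cH_\lambda$ is $\lambda\res_\fh$, occurring with multiplicity one (spanned by $v_\lambda$), and likewise for $\cH_\mu$. Consequently $\cH$ has the unique highest $\fh$-weight $\lambda\res_\fh + \mu\res_\fh$, with one-dimensional weight space $\C v$.

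Finally I would pass to the cyclic submodule $\cM := \oline{U(\g(\cA))v}$ and prove it is irreducible. The torus $T := \exp_G(\ft)$ acts unitarily on $\cH$, so every closed invariant subspace splits into its $\fh$-weight components. Given any orthogonal decomposition $\cM = \cK_1 \oplus \cK_2$ into closed invariant subspaces, the one-dimensional top weight space $\C v$ of $\cM$ must lie entirely in one summand, say $v \in \cK_1$; by minimality of the cyclic module this forces $\cM \subseteq \cK_1$, i.e.\ $\cK_2 = \{0\}$. Thus $\cM$ is indecomposable. Since $\pi\res_\cM$ is a $*$-representation, the orthogonal complement of any closed invariant subspace is again invariant (as in the proof of Proposition~\ref{prop:6.1}(ii)), so for $\cM$ indecomposability is equivalent to irreducibility. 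By Proposition~\ref{prop:6.1}(v) we then have $\dim \cM^{\fg^-} = 1$; as $v \in \cM^{\fg^-}$, this gives $\cM^{\fg^-} = \C v$ with $\g^0$ acting through $\lambda+\mu$. Hence $\lambda+\mu$ is inducible.

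The main obstacle is precisely that $\pi_\lambda \otimes \pi_\mu$ need not be irreducible, and that the spaces $\cH_\lambda,\cH_\mu$ may be infinite-dimensional, so there is no appeal to finite-dimensional complete reducibility and no guarantee that the representation decomposes discretely. The device that overcomes this is the one-dimensionality of the top $\fh$-weight line $\C v$: it renders the cyclic module $\cM$ indecomposable, and for $*$-representations indecomposability coincides with irreducibility. This is cleaner than disintegrating $\pi$ into irreducibles and tracking which highest weights occur, which would also work but requires the direct-integral machinery cited after Proposition~\ref{prop:6.1}.
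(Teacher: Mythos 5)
Your proposal is correct, and its core construction --- forming $\pi_\lambda \otimes \1 + \1 \otimes \pi_\mu$ on $\cH_\lambda \hat\otimes \cH_\mu$ and passing to the closed cyclic submodule generated by $v_\lambda \otimes v_\mu$ --- is exactly the paper's. Where you diverge is in how irreducibility of that submodule is verified. The paper is more economical: it reuses the computation from the proof of Proposition~\ref{prop:6.1}(iv), namely that for a subspace $\cE_0 \subeq \cE$ the cyclic module $\cH_0 = \oline{U(\g(\cA))\cE_0}$ satisfies $\cH_0 \subeq \oline{U(\fg^+)\cE_0} \subeq \cE_0 + \cE^\bot$, hence $(\cH_0)^{\fg^-} = \cH_0 \cap \cE = \cE_0$; applied to $\cE_0 = \cF := \cE_\lambda \otimes \cE_\mu$ this identifies the $\fg^-$-invariants of the cyclic module as the line $\cF$ at once, and Proposition~\ref{prop:6.1}(v) then yields irreducibility. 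You instead prove irreducibility first, by showing that the $\fh$-weight space of $\cH_\lambda \hat\otimes \cH_\mu$ for the weight $(\lambda+\mu)\res_\fh$ is the line $\C v$ (nonzero sums of positive roots cannot cancel), so any orthogonal invariant decomposition of $\cM$ concentrates $v$ in one summand, forcing indecomposability, which for $*$-representations is irreducibility; only afterwards do you invoke Proposition~\ref{prop:6.1}(v) to get $\cM^{\fg^-} = \C v$. Both arguments are sound, and both ultimately rest on the triangular decomposition $U(\g(\cA)) = U(\fg^+)U(\g^0)U(\fg^-)$ --- you need it to see that $\cH_\lambda = \oline{U(\fg^+)v_\lambda}$ has top weight of multiplicity one. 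Your route makes the finite-dimensional ``a highest weight vector of multiplicity one generates an irreducible module'' mechanism explicit and avoids the orthogonality trick; the paper's route buys brevity, since all the needed work was already packaged into Proposition~\ref{prop:6.1}.
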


\begin{prf} We consider the $*$-representation $(\pi, \cH)$ with 
\[ \cH := \cH_\lambda \otimes \cH_\mu \quad \mbox{ and } \quad 
\pi := \pi_\lambda \otimes \1 + \1 \otimes \pi_\mu.\] 
Then $\cF := \cE_\lambda \otimes \cE_\mu \subeq \cE$ is a one-dimensional 
subspace on which $\g^0$ acts by the weight $\lambda + \mu$. 
Since $\cH_0 := \oline{U(\g(\cA)) \cF} \subeq \cH$ is a 
$\g(\cA)$-submodule with $\cF = (\cH_0)^{\fg^-}$ 
(Proposition~\ref{prop:6.1}(iv)), it carries an irreducible representation with 
highest weight $\lambda + \mu$. 
\end{prf}

\begin{defn} Let $\cA$ be a commutative involutive cia and 
$\chi \: \cA \to \C$ an involutive character. 
Then $\ev_\chi := \id \otimes \chi \: \g(\cA) \to \g(\C) \cong \g$ 
is an involutive algebra homomorphism. 
If $(\rho,\cH)$ is a $*$-representation of $\g$ 
with highest weight $\lambda$, then the 
representation $\pi_{\chi,\rho} := \rho \circ \ev_\chi$ 
of $\g(\cA)$ on $\cH$ is called an 
irreducible {\it evaluation representation}. 
This is an irreducible $*$-representation with highest 
weight~$\lambda \otimes \chi$. 
\end{defn}

The proof of the following theorem builds 
on the main result of Section~\ref{sec:3} (Theorem~\ref{thm:6.x}) 
which deals with the special case $\g = \fsl_2(\C)$. 

\begin{thm} \mlabel{thm:ind-charac} 
All functionals of the form 
$\lambda \otimes \chi$, where $\chi \in \Gamma_\cA^*$ is an involutive character 
and $\lambda \in \fh^*$ is dominant integral, i.e., 
$\lambda(\check \alpha) \in \N_0$ for $\alpha \in \Pi$, are 
inducible. Conversely, any inducible functional is such a finite sum. 
\end{thm}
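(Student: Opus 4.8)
The plan is to treat the two implications separately: sufficiency follows directly from the evaluation representations, while necessity is reduced to the $\fsl_2$-case. For sufficiency, recall that for dominant integral $\lambda$ and $\chi \in \Gamma_\cA^*$ the evaluation representation $\pi_{\chi,\rho_\lambda}$ is an irreducible bounded $*$-representation with highest weight $\lambda \otimes \chi$, so each such functional is inducible; since a sum of inducible functionals is again inducible by Lemma~\ref{lem:sum}, every finite sum $\sum_j \lambda_j \otimes \chi_j$ of this type is inducible.

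For necessity, let $\Lambda \: \g^0 \to \C$ be inducible, realized on the one-dimensional space $\cE = \C v$ of an irreducible bounded $*$-representation $(\pi, \cH)$ of $\g(\cA)$, so that $\pi(h \otimes a) v = \Lambda(h \otimes a) v$ for all $h \in \fh$ and $a \in \cA$ (Proposition~\ref{prop:6.1}(v)). Since the simple coroots $\check\alpha_1, \dots, \check\alpha_r$ form a basis of $\fh$, the functional $\Lambda$ is completely determined by the continuous functions $\phi_i \: \cA \to \C$, $\phi_i(a) := \Lambda(\check\alpha_i \otimes a)$, for $i = 1, \dots, r$. The crux is to show that each $\phi_i$ is a finite sum $\sum_{\chi} n_i(\chi)\, \chi$ of involutive characters $\chi \in \Gamma_\cA^*$ with coefficients $n_i(\chi) \in \N_0$.

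Here I carry out the $\fsl_2$-reduction. For each simple root $\alpha_i$ the span $\langle \g_{\alpha_i}, \check\alpha_i, \g_{-\alpha_i}\rangle$ is a copy of $\fsl_2(\C)$, and the corresponding subalgebra $\fsl_2^{(i)}(\cA) := \langle \g_{\alpha_i}, \check\alpha_i, \g_{-\alpha_i}\rangle \otimes \cA \cong \fsl_2(\cA)$ inherits the involution of $\g(\cA)$, its Cartan part being $\C\check\alpha_i \otimes \cA$. As $\cE = \cH^{\fg^-}$, the vector $v$ is annihilated by $\pi(\g_{\alpha_i} \otimes \cA)$ and is a joint eigenvector of $\C\check\alpha_i \otimes \cA$ with eigenvalue $\phi_i$ (under the identification $\check\alpha_i \otimes a \leftrightarrow a$). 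Since $\check\alpha_i^* = \check\alpha_i$, the commutative family $\pi(\C\check\alpha_i \otimes \cA)$ is $*$-closed, so the orthogonal projection onto $\C v$ commutes with it. Applying Proposition~\ref{prop:6.1} to the cyclic bounded $*$-representation $\overline{U(\fsl_2^{(i)}(\cA)) v}$ of $\fsl_2^{(i)}(\cA)$, part~(iv) identifies this projection with one in the commutant of the representation, producing an invariant subspace whose highest-weight space is $\C v$; by part~(v) this subspace carries an irreducible representation with highest weight $\phi_i$. Thus $\phi_i$ is inducible for $\fsl_2^{(i)}(\cA)$, and Theorem~\ref{thm:6.x} yields the asserted expression $\phi_i = \sum_{\chi} n_i(\chi)\, \chi$ with $n_i(\chi) \in \N_0$.

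It remains to assemble the pieces. Let $S \subseteq \Gamma_\cA^*$ be the finite union of the characters occurring in $\phi_1, \dots, \phi_r$, and for $\chi \in S$ define $\lambda_\chi \in \fh^*$ on the coroot basis by $\lambda_\chi(\check\alpha_i) := n_i(\chi)$. Each $\lambda_\chi$ is dominant integral because $\lambda_\chi(\check\alpha_i) = n_i(\chi) \in \N_0$ for every simple root, and evaluating on $\check\alpha_i \otimes a$ gives $\sum_{\chi \in S} \lambda_\chi(\check\alpha_i)\chi(a) = \phi_i(a) = \Lambda(\check\alpha_i \otimes a)$, so that $\Lambda = \sum_{\chi \in S} \lambda_\chi \otimes \chi$ by linearity and the fact that the $\check\alpha_i$ span $\fh$. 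The main obstacle is the $\fsl_2$-reduction: identifying $\phi_i$ as an inducible functional of the subalgebra $\fsl_2^{(i)}(\cA)$ and, above all, the substance of Theorem~\ref{thm:6.x}, whose proof occupies Section~\ref{sec:3}; once these are in hand the assembly reduces to the linear algebra of the coroot basis.
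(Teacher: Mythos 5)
Your proof is correct and takes essentially the same route as the paper: evaluation representations together with Lemma~\ref{lem:sum} for sufficiency, an $\fsl_2$-reduction along the simple coroots feeding into Theorem~\ref{thm:6.x} for necessity, and the same coroot-basis assembly at the end. The only (immaterial) difference is how the inducibility of $\lambda\res_{\check\alpha_i \otimes \cA}$ is certified: the paper shows by a PBW factorization and orthogonality that the cyclic module generated by $\cE$ is itself irreducible with highest-weight space $\cE$, while you extract an irreducible subrepresentation by lifting the projection onto $\C v$ through the commutant isomorphism of Proposition~\ref{prop:6.1}(iv); both arguments are sound.
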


\begin{prf} The definition of the evaluation representations 
shows that any functional of the form 
$\lambda \otimes \chi$, $\chi \in \Gamma_\cA^*$  and $\lambda \in \fh^*$ 
dominant integral, is inducible. Further, Lemma~\ref{lem:sum} 
implies that any sum of such 
functionals is inducible. 

We now show that any inducible functional is of this form. 
Suppose that $\lambda$ is inducible 
and that $(\pi, \cH) := (\pi_\lambda, \cH_\lambda)$ is the corresponding representation 
of $\g(\cA)$. Let $\alpha \in \Delta$ and 
\[ \g^\alpha(\cA) := \g_\alpha(\cA) + \g_{-\alpha}(\cA) 
+ \check \alpha \otimes \cA \cong \fsl_2(\cA).\] 
Then $\cE$ is annihilated by $\g_\alpha(\cA)$ and 
generates a $\g^\alpha(\cA)$-subrepresentation 
$(\pi_\alpha, \cH_\alpha)$. Then 
\[ U(\g^\alpha(\cA)) \cE = U(\g_{-\alpha}(\cA)) \cE 
\subeq \cE + \pi_\alpha(\g_{-\alpha}(\cA)) \cH_\alpha \] 
implies that 
\[ \cE_\alpha := \cH_\alpha^{\g_\alpha(\cA)} = (\pi_\alpha(\g_{-\alpha}(\cA))\cH_\alpha)^\bot 
= \cE \] 
is one-dimensional, so that $(\pi_\alpha, \cH_\alpha)$ is irreducible 
with $\cE_\alpha = \cE$. Therefore 
$\lambda\res_{\check \alpha \otimes \cA}$ is  inducible. 
The main result of Section~\ref{sec:3} (Theorem~\ref{thm:6.x}) 
asserts that there exist finitely 
many pairwise different involutive characters $\chi_1, \ldots, \chi_N \in \Gamma_\cA$ 
and $m_j \in \N_0$ with 
\[ \lambda(\check\alpha \otimes a) 
= \sum_{j = 1}^N m_j\chi_j(a) \quad \mbox{ for } \quad a \in \cA.\] 

Recall that $\Pi = \{ \alpha_1, \ldots, \alpha_r\}$ is the set of 
simple roots. Then 
$\check \alpha_1, \ldots, \check \alpha_r$ is a basis of $\fh$, 
and we obtain finitely many pairwise different 
involutive characters $\chi_j \in \Gamma_\cA$ and 
$m_{ij} \in \N_0$ with 
\[ \lambda(\check \alpha_i \otimes a) = 
\sum_j m_{ij} \chi_j(a) \quad \mbox{ for } \quad a \in \cA.\] 
Define $\lambda_j \in \fh^*$ by 
$\lambda_j(\check \alpha_i) = m_{ij}$ and note that 
$\lambda_j$ is dominant integral. We now have for each~$i$ 
\[ \lambda(\check \alpha_i \otimes a) 
= \sum_j (\lambda_j \otimes \chi_j)(\check \alpha_i \otimes a) 
 \quad \mbox{ for } \quad a \in \cA, \] 
so that 
$\lambda = \sum_j \lambda_j \otimes \chi_j.$
\end{prf} 

\subsection{Bounded \texorpdfstring{$*$}{*}-representations}
The classification of inducible functionals now 
yields the irreducible $*$-representations \mbox{of $\fg(\cA)$}.

\begin{lem}
  \mlabel{lem:surject} 
If $\chi_1, \ldots, \chi_N \: \cA \to\C$ are mutually different characters 
of the complex algebra $\cA$, then the homomorphism 
\[ \chi \: \cA \to \C^N, \quad 
\chi(a) := (\chi_1(a),\ldots, \chi_N(a)) \] 
is surjective. 
\end{lem}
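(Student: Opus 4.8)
The plan is to prove surjectivity by an explicit Chinese Remainder Theorem construction: I will produce, for each $j \in \{1, \ldots, N\}$, an element $e_j \in \cA$ whose image $\chi(e_j)$ is the $j$-th standard basis vector of $\C^N$, i.e.\ $\chi_i(e_j) = \delta_{ij}$ for all $i$. Once such elements are found, the image of $\chi$ contains a basis of $\C^N$, and since $\chi$ is linear, its image is a linear subspace; containing a spanning set forces it to be all of $\C^N$, which is the assertion.

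The building blocks come from the hypothesis that the characters are mutually distinct. For each ordered pair $(i,j)$ with $i \neq j$, distinctness of $\chi_i$ and $\chi_j$ yields an element $a_{ij} \in \cA$ with $\chi_i(a_{ij}) \neq \chi_j(a_{ij})$. Using that $\cA$ is unital and that each $\chi_k$ is a unital algebra homomorphism, I would normalize this to
\[ b_{ij} := \frac{a_{ij} - \chi_i(a_{ij})\1}{\chi_j(a_{ij}) - \chi_i(a_{ij})} \in \cA, \]
so that $\chi_i(b_{ij}) = 0$ and $\chi_j(b_{ij}) = 1$.

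I would then set $e_j := \prod_{i \neq j} b_{ij}$ and verify the defining property using multiplicativity of the characters. Applying $\chi_j$ gives $\chi_j(e_j) = \prod_{i \neq j} \chi_j(b_{ij}) = 1$, while for any $k \neq j$ the product defining $e_j$ contains the factor $b_{kj}$, and $\chi_k(e_j) = \prod_{i \neq j}\chi_k(b_{ij})$ vanishes because the factor with $i = k$ contributes $\chi_k(b_{kj}) = 0$. Hence $\chi(e_j) = (\delta_{1j}, \ldots, \delta_{Nj})$, as required.

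There is no serious obstacle here: the argument is essentially the separation-of-points statement underlying the Chinese Remainder Theorem (equivalently, Dedekind's linear independence of distinct characters, which would give an alternative proof — a nontrivial relation $\sum_j c_j \chi_j = 0$ cutting out a hyperplane that contains the image cannot exist). The only points requiring care are that $\cA$ is unital (so that $\chi_i(a_{ij})\1$ makes sense and the $\chi_k$ are unital, making the normalization legitimate) and that each $\chi_k$ is genuinely multiplicative, so that the value of $\chi_k$ on the product $e_j$ factors through the individual factors; both hold by the definition of a character in the present setting.
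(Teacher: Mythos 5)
Your proof is correct, but it takes a different route from the paper. The paper disposes of this lemma in one line: distinct characters are linearly independent (Dedekind's theorem), so if the image of $\chi$ were a proper subspace of $\C^N$ it would lie in a hyperplane $\sum_j c_j z_j = 0$, yielding a nontrivial relation $\sum_j c_j \chi_j = 0$ --- a contradiction. That is precisely the ``alternative proof'' you mention in passing at the end; the paper makes it the whole argument. Your primary argument is instead a constructive Chinese-Remainder-style one, exhibiting explicit elements $e_j = \prod_{i \neq j} b_{ij}$ with $\chi_i(e_j) = \delta_{ij}$. Both are valid. The trade-offs: the linear-independence argument is shorter, quotes a classical fact, and does not use the unit of $\cA$ (only that the characters are nonzero), whereas your construction genuinely needs $\cA$ unital so that the normalization $b_{ij} = (a_{ij} - \chi_i(a_{ij})\1)/(\chi_j(a_{ij}) - \chi_i(a_{ij}))$ makes sense --- harmless here, since every application in the paper (Theorems \ref{thm:tenspro} and \ref{thm:6.x}) concerns unital cias, and nonzero characters of a unital algebra are automatically unital. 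In return, your proof is self-contained and produces explicit preimages of the standard basis vectors, which is more information than bare surjectivity.
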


\begin{prf} This follows immediately from the fact that 
characters are linearly independent. 
\end{prf}

\begin{thm} \mlabel{thm:tenspro} 
Every bounded irreducible $*$-representation 
$(\pi, \cH)$ of $\g(\cA)$
is unitarily equivalent to a finite tensor product of irreducible evaluation representations;
there exists a finite set $\bx \subseteq \Gamma_{\cA}$
of involutive characters, and for each $\chi \in \bx$
an 
irreducible $*$-representation
$\rho_{\chi}$ of $\fg$, 
such that 
$\pi \simeq \pi_{\bx,\rho} := \bigotimes_{\chi \in \bx} \rho_{\chi} \circ \ev_{\chi}$. 
Conversely, all such representations are irreducible, 
and $\pi_{\bx,\rho} \simeq \pi_{\bx',\rho'}$ if and only if
$\bx = \bx'$ and $\rho_{\chi} \simeq \rho_{\chi}'$ for all $\chi\in \bx$.
\end{thm}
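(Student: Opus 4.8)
The plan is to reduce everything to the classification of inducible functionals in Theorem~\ref{thm:ind-charac} together with the rigidity of Lemma~\ref{findimeq}, which says that an irreducible bounded $*$-representation is determined up to equivalence by its highest weight. Under this reduction the whole statement becomes an assertion about the highest weights $\sum_j \lambda_j \otimes \chi_j$ and about an explicit model realizing them.

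For the existence part I would start from an irreducible $\pi$ with highest weight $\lambda$. By Theorem~\ref{thm:ind-charac}, $\lambda = \sum_{j=1}^N \lambda_j \otimes \chi_j$ for pairwise distinct involutive characters $\chi_j \in \Gamma_\cA^*$ and dominant integral $\lambda_j$; discarding the terms with $\lambda_j = 0$, I may assume all $\lambda_j \neq 0$, and I set $\bx := \{\chi_1, \ldots, \chi_N\}$ and $\rho_{\chi_j} := \rho_{\lambda_j}$. I then form the candidate $\pi_{\bx,\rho} = \bigotimes_j \rho_{\lambda_j} \circ \ev_{\chi_j}$, which is manifestly a bounded $*$-representation, being finite-dimensional and built from the continuous involutive homomorphisms $\ev_{\chi_j}$. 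It remains to show it is irreducible with highest weight $\lambda$, since Lemma~\ref{findimeq} then yields $\pi \simeq \pi_{\bx,\rho}$.

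The key structural observation, which also gives the converse, is that $\pi_{\bx,\rho}$ factors through the evaluation homomorphism $\ev := (\ev_{\chi_1}, \ldots, \ev_{\chi_N}) \: \g(\cA) \to \g^N := \g \oplus \cdots \oplus \g$ as $\pi_{\bx,\rho} = (\rho_{\lambda_1} \boxtimes \cdots \boxtimes \rho_{\lambda_N}) \circ \ev$, where $\boxtimes$ denotes the outer tensor product representation of $\g^N$. By Lemma~\ref{lem:surject} the scalars $(\chi_1(a), \ldots, \chi_N(a))$ range over all of $\C^N$ as $a$ varies, so $\ev$ is surjective; since $\g$ is semisimple, the outer tensor product of the finite-dimensional irreducibles $\rho_{\lambda_j}$ (on spaces $V_j$) is an irreducible representation of $\g^N$, as its image generates $\End(V_1) \otimes \cdots \otimes \End(V_N)$ by Burnside/Jacobson density. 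Surjectivity of $\ev$ then forces $\pi_{\bx,\rho}$ to be irreducible. A direct computation on the pure tensor $v_1 \otimes \cdots \otimes v_N$ of highest weight vectors, which lies in $\cE(\pi_{\bx,\rho})$ because each $v_j$ is annihilated by the relevant root spaces, shows that $\g^0 = \fh \otimes \cA$ acts by $h \otimes a \mapsto \sum_j \lambda_j(h)\chi_j(a) = \big(\sum_j \lambda_j \otimes \chi_j\big)(h \otimes a)$, so the highest weight is exactly $\lambda$. I expect this irreducibility step---surjectivity of $\ev$ combined with irreducibility of outer tensor products---to be the main, though not deep, obstacle; everything else is bookkeeping.

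Finally, for uniqueness I would use Lemma~\ref{findimeq} to translate $\pi_{\bx,\rho} \simeq \pi_{\bx',\rho'}$ into equality of highest weights, $\sum_{\chi \in \bx} \lambda_\chi \otimes \chi = \sum_{\chi' \in \bx'} \lambda'_{\chi'} \otimes \chi'$, and then show this decomposition is unique. Evaluating both sides on $\check\alpha_i \otimes a$ for the basis $\check\alpha_1, \ldots, \check\alpha_r$ of $\fh$ reduces the equality to $\sum_\chi \big(\lambda_\chi(\check\alpha_i) - \lambda'_\chi(\check\alpha_i)\big)\chi(a) = 0$ over the finite set $\bx \cup \bx'$ (with absent weights set to zero). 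Linear independence of distinct characters forces $\lambda_\chi(\check\alpha_i) = \lambda'_\chi(\check\alpha_i)$ for all $i$, hence $\lambda_\chi = \lambda'_\chi$ in $\fh^*$. Since the weights present are nonzero, this gives $\bx = \bx'$, and as a dominant integral weight determines the finite-dimensional irreducible representation of $\g$, also $\rho_\chi \simeq \rho'_\chi$ for all $\chi \in \bx$.
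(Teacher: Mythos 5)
Your proposal is correct and follows essentially the same route as the paper: decompose the highest weight via Theorem~\ref{thm:ind-charac}, realize it by the tensor product of evaluation representations, which factors through the surjective homomorphism $\ev_{\bx} \: \g(\cA) \to \g^{\bx}$ (Lemma~\ref{lem:surject}) and is therefore irreducible, and conclude with Lemma~\ref{findimeq}. The only cosmetic difference is in the uniqueness step, where you compare highest-weight decompositions directly via linear independence of characters, whereas the paper argues with kernel containments $\ker \chi' \supeq \bigcap_{\chi \in \bx}\ker\chi$; both reduce to the same linear-independence fact.
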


\begin{prf} In view of Theorem~\ref{thm:ind-charac}, 
we can write the highest weight $\lambda \: \fh \otimes \cA \to \C$ of 
$(\pi, \cH)$ in the form 
$\lambda = \sum_{\chi \in \bx} \lambda_\chi \otimes \chi$,
where the $\lambda_\chi$ are dominant weights and $\bx\subseteq \Gamma_{\cA}$ 
is a finite set of involutive characters of~$\cA$. 
Then 
\[ \ev_{\bx} \: \g(\cA) \to \g^{\bx}, \quad 
\ev(x \otimes a)(\chi) = \chi(a)x 
\] 
is a surjective homomorphism of Lie algebras (Lemma~\ref{lem:surject}) 
through which 
all the evaluation representations $\pi_{\chi,\rho_{\chi}}$ factor,
where $\rho_{\chi}$ is the representation with highest weight $\lambda_{\chi}$.
This implies that the tensor product 
$\pi_{\bx,\rho} = \bigotimes_{\chi \in \bx} \rho_{\chi} \circ \ev_{\chi}$ defines an irreducible 
representation of $\g(\cA)$, which clearly has highest weight 
$\lambda$. 

Conversely, suppose that $\pi_{\bx,\rho} \simeq \pi_{\bx',\rho'}$.
Then
$\ker \chi' \supset \bigcap_{\chi \in \bx} \ker \chi$ for all $\chi' \in \bx'$, so that 
$\chi' \in \bx$ by Lemma~\ref{lem:surject}.
Similarly, we have $\chi \in \bx'$ for all $\chi \in \bx$, 
whence $\bx' = \bx$.
It then follows from Lemma~\ref{findimeq} and the surjectivity of $\ev_{\bx}$  
that $\pi_{\chi} \simeq \pi'_{\chi}$ for all $\chi \in \bx$.
\end{prf}

Specializing to $\cA = C^\infty(X,\C)$ if $X$ is a compact manifold 
(Example~\ref{ex:cia}(a)) and to 
$\cA = C^\infty_c(X,\C)_+$ if $X$ is a non-compact manifold 
(Example~\ref{ex:cia}(b)),  we 
notice that $\Gamma_{\cA} \simeq X$, so that we obtain: 

\begin{cor} \mlabel{cor:tenspro} 
Let $X$ be a smooth manifold. 
Then every bounded irreducible $*$-representation 
$(\pi, \cH)$ of the Fr\'echet--Lie algebra 
$C^\infty(X,\fk)$ (if $X$ is compact) or 
$C^\infty_c(X,\fk) \rtimes \fk \cong \fk \otimes_\R C^\infty_c(X,\R)_+$ 
(if $X$ is non-compact) is a finite tensor product 
$\pi \cong \bigotimes_{x \in \bx} \rho_x \circ \ev_{x} $ 
of irreducible evaluation representations
for some finite subset $\bx \subseteq X$ and irreducible $*$-representations
$\rho_x$ of $\fg$.
Conversely, all such representations 
are irreducible,  
and $\pi_{\bx,\rho} \simeq \pi_{\bx',\rho'}$ if and only if
$\bx = \bx'$ and $\rho_{x} \simeq \rho_{x}'$ for \mbox{all $x \in \bx$}.
\end{cor}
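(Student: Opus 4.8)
The plan is to deduce the corollary directly from Theorem~\ref{thm:tenspro}, so that the work consists entirely in matching the two Lie algebras $C^\infty(X,\fk)$ and $C^\infty_c(X,\fk)\rtimes\fk$ to the setting $\fk_\cA = \fk\otimes_\R\cA_\R$ of that theorem and in identifying the relevant involutive characters with points of $X$. First I would fix the involution on $\cA$ to be complex conjugation, so that $\cA_\R = C^\infty(X,\R)$ in the compact case and, writing $\cA = C^\infty_c(X,\C)_+$, $\cA_\R = \R\1 \oplus C^\infty_c(X,\R)$ in the non-compact case. The isomorphisms $\fk\otimes_\R C^\infty(X,\R)\cong C^\infty(X,\fk)$ and $\fk\otimes_\R\cA_\R \cong C^\infty_c(X,\fk)\rtimes\fk$ are then immediate from the tensor-product bracket, the semidirect factor $\fk$ arising from the unit $\1$. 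By Example~\ref{ex:cia}(a),(b), in each case $\cA$ is a commutative unital continuous inverse algebra, so the hypotheses of Theorem~\ref{thm:tenspro} are satisfied.

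Next I would pass from the real Lie algebra to its complexification. By Definition~\ref{def:bsrep}(b), bounded unitary representations of the real form $\fk_\cA$ are in one-to-one correspondence with bounded $*$-representations of $\g(\cA)=\g\otimes_\C\cA$ via complex-linear extension, and this correspondence manifestly preserves irreducibility and unitary equivalence. Hence every bounded irreducible representation of $C^\infty(X,\fk)$ (resp.\ of $C^\infty_c(X,\fk)\rtimes\fk$) corresponds to a bounded irreducible $*$-representation of $\g(\cA)$, to which Theorem~\ref{thm:tenspro} applies verbatim.

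It then remains to identify the involutive characters. Since $\cA_\R$ consists of real-valued functions, every hermitian element $a=a^*$ is real-valued and therefore has real spectrum, so $\cA$ is hermitian and by Remark~\ref{rem:bd}(b) every character is involutive; thus $\Gamma_\cA^* = \Gamma_\cA$. By Example~\ref{ex:cia}(a),(b) the characters are the point evaluations $\chi_x(f)=f(x)$, and under the resulting bijection $\Gamma_\cA^*\simeq X$ the evaluation representation $\rho\circ\ev_{\chi_x}$ of $\g(\cA)$ is exactly $\rho\circ\ev_x$, where $\ev_x\:C^\infty(X,\fk)\to\fk$ is evaluation at $x$. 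Feeding this identification into Theorem~\ref{thm:tenspro} yields the asserted decomposition $\pi\cong\bigotimes_{x\in\bx}\rho_x\circ\ev_x$ over a finite $\bx\subseteq X$, and transports the converse statement and the equivalence criterion $\pi_{\bx,\rho}\simeq\pi_{\bx',\rho'}\iff \bx=\bx'$ and $\rho_x\simeq\rho_x'$ directly to the present setting.

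The step that will require the most care is the non-compact case: there $\cA=C^\infty_c(X,\C)_+$ is a unitalization, so strictly its character space also contains the augmentation $\eps$ vanishing on $C^\infty_c(X,\C)$, giving $\Gamma_\cA\cong X_\omega$ rather than $X$. I would check that the evaluation representation attached to $\eps$ factors through the projection onto the constant subalgebra $\fk$, and argue that this boundary character is either absorbed or excluded so that the parametrizing set can indeed be taken to be $X$ as stated. Everything else is bookkeeping, since the genuine representation-theoretic content already resides in Theorem~\ref{thm:tenspro} and, through it, in the $\fsl_2$-reduction of Section~\ref{sec:3}.
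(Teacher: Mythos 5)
Your proposal is correct and follows exactly the paper's route: the paper obtains the corollary by specializing Theorem~\ref{thm:tenspro} to $\cA = C^\infty(X,\C)$, resp.\ $\cA = C^\infty_c(X,\C)_+$, and identifying the (automatically involutive) characters with point evaluations via Example~\ref{ex:cia}.

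Concerning the loose end you flag in the non-compact case: your instinct is right, and the resolution is ``absorbed'', not ``excluded''. The augmentation character $\eps$ of $C^\infty_c(X,\C)_+$ is involutive, so it legitimately occurs in Theorem~\ref{thm:tenspro}, and the associated evaluation representation is $(f,y)\mapsto\rho(y)$, an irreducible bounded representation of $C^\infty_c(X,\fk)\rtimes\fk$ factoring through the quotient $\fk$. This representation is \emph{not} equivalent to any finite tensor product of evaluations at points of $X$ (such a product never annihilates the ideal $C^\infty_c(X,\fk)$ unless it is trivial), so it cannot be excluded. Strictly speaking, the parametrizing set should therefore be $\Gamma_\cA^* \cong X_\omega$, the one-point compactification, with the point at infinity contributing the representations pulled back from $\fk$; the paper itself glosses over this by asserting $\Gamma_\cA \simeq X$ just before the corollary. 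That the authors are aware of the issue shows up later, in Lemma~\ref{prop:c.23b}, where they argue separately that the characters arising there do not vanish on the ideal $C^\infty_c(U,\R)$ and hence are genuine point evaluations. So your write-up, made precise in this way, is if anything slightly more careful than the paper's own one-line deduction.
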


\subsection{Translation to the group context}
Theorem \ref{thm:tenspro} classifies the bounded irreducible 
$*$-representations of $\fg(\cA)$.
We now discuss how these results lift to the group level, 
providing a complete classification 
of the bounded unitary representations of the $1$-connected 
Lie group $K_\cA$ with Lie algebra $\fk \otimes_\R \cA_\R$.
Here and throughout this section, $\cA$
will be a a unital commutative involutive cia and
$\fk$ a compact semisimple Lie algebra.

\subsubsection{Matrix groups over cias} 

We start by introducing some Lie groups related to~$\cA$. 

\begin{defn}  (a) Since $\cA$ is commutative, 
$\tr \:  \gl_n(\cA) \to \cA, (x_{ij}) \mapsto \sum_{j = 1}^n x_{jj}$ 
is a homomorphism of Lie algebras, so that 
\[ \fsl_n(\cA) := \{ x \in \gl_n(\cA) \: \tr x = 0\} \] 
is a closed ideal which is the Lie algebra of the Lie subgroup 
\[ \SL_n(\cA) := \ker(\det),\] 
where $\det \: \GL_n(\cA) \to \cA^\times$ is the natural determinant 
homomorphism (cf.\ \cite[Prop.~IV.3.4]{Ne06}). We write $\tilde\SL_n(\cA)$ 
for the unique $1$-connected locally exponential Lie group with 
Lie algebra $\fsl_n(\cA)$, which is the simply connected 
covering of the identity component $\SL_n(\cA)_0$. 

(b)  If $\cA$ is involutive, then the involution 
extends to all matrix algebras $M_n(\cA)$ by 
\[ (x_{ij})^* := (x_{ji}^*).\] 
We have corresponding unitary groups and their Lie algebras 
\[ \U_n(\cA) := \{ g \in \GL_n(\cA) \: g^* = g^{-1} \} \quad \mbox{ and } \quad 
\fu_n(\cA) := \{ x \in \gl_n(\cA) \: x^* = - x\}.\] 
The closed subalgebra $\su_n(\cA) := \fu_n(\cA)\cap \fsl_n(\cA)$ 
is the Lie algebra of the Lie subgroup 
\[ \SU_n(\cA) := \U_n(\cA) \cap \SL_n(\cA).\] 
Hence it is the Lie algebra of a unique $1$-connected Lie group, 
denoted $\tilde\SU_n(\cA)$. 
\end{defn} 

\begin{rem} \mlabel{rem:complexif} 
(a) Let $K$ be a $1$-connected compact Lie group  
and $G := K_\C$ its universal complexification. We write 
$\fk \subeq \fg = \fk_\C$ for their Lie algebras.  
Then $\fk_\cA = \fk \otimes_\R \cA_\R$ is a real form of the complex 
Lie algebra $\g(\cA) = \g \otimes_\C \cA$. 
Let $K_\cA$, resp., $G(\cA)$ be corresponding $1$-connected locally exponential 
Lie groups. Then the canonical morphism 
$\eta \: K_\cA \to G(\cA)$ for which 
$\L(\eta)$ is the inclusion $\fk \otimes_\R \cA_\R \into \fg \otimes_\C \cA$ 
has the following universal property. 
For each smooth morphism 
$\alpha \: K_\cA  \to H$, where $H$ is a complex Lie group with exponential 
function, there exists a unique holomorphic morphism 
$\alpha_\C \: G(\cA) \to H$ with 
$\alpha_\C \circ \eta = \alpha$. To verify this claim, we simply have to 
integrate the complex linear extension 
$\L(\alpha)_\C \: \g \otimes \cA  \to \L(H)$ to a group 
homomorphism (\cite[Thm.~4.1.19]{Ne06}). 

(b) For $\fk = \su_n(\C)$ and $\g = \fsl_n(\C)$ 
we have $K_\cA = \tilde\SU_n(\cA)$ and $G(\cA) = \tilde\SL_n(\cA)$. 
\end{rem}

\subsubsection{Bounded unitary representations} 

We now discuss the translation 
from Lie algebra to Lie group representations. 

\begin{defn} Let $G$ be a complex locally exponential Lie group endowed 
with an antiholomorphic antiautomorphism $g \mapsto g^*$.  
A {\it holomorphic $*$-representation of $G$} is a pair 
$(\pi, \cH)$, where $\cH$ is a complex Hilbert space and 
$\pi \: G \to \GL(\cH)$ is a holomorphic homomorphism 
satisfying $\pi(g^*) = \pi(g)^*$ for $g \in G$. 
Then $\dd\pi \: \g \to B(\cH)$ is a bounded $*$-representation of $\g$ 
in the sense of Definition~\ref{def:bsrep}. 
\end{defn}

With Remark~\ref{rem:complexif} we immediately obtain the following generalization 
of Weyl's correspondence between unitary representations of $K_\cA$ and holomorphic 
representations of $G(\cA)$: 

\begin{prop} Let $\cA$ be a commutative involutive cia, 
$\g$ a semisimple complex Lie algebra with compact real form $\fk$, 
$G(\cA)$ a $1$-connected Lie group with Lie algebra 
$\g \otimes_\C \cA$, and 
$K_\cA$ a $1$-connected Lie group with Lie algebra 
$\fk \otimes_\R \cA_\R$. 
 
If $(\pi_\C, \cH)$ is a holomorphic $*$-representation of $G(\cA)$, then 
$\pi := \pi_\C \circ \eta$ is a bounded unitary representation of 
$K_\cA$. 
We thus obtain a one-to-one correspondence between holomorphic $*$-representations 
$\pi_\C$ of $G(\cA)$ and bounded unitary representations $\pi$ of $K_\cA$. 

The commutants $\pi(K_\cA)'$ and 
$\pi_\C(G(\cA))'$ coincide, so that $\pi$ is irreducible 
if and only if $\pi_\C$ has this property. 
\end{prop}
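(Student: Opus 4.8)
The plan is to derive everything from the universal property of the canonical morphism $\eta \: K_\cA \to G(\cA)$ recorded in Remark~\ref{rem:complexif}, together with the observation that $\eta$ takes values in the ``unitary part'' of $G(\cA)$. First I would establish the auxiliary fact that $\eta(g)^* = \eta(g)^{-1}$ for all $g \in K_\cA$. On the infinitesimal level $\L(\eta)$ is the inclusion $\fk_\cA \into \g(\cA)$, and every $x \in \fk_\cA$ satisfies $x^* = -x$; since the antiautomorphism $g \mapsto g^*$ of $G(\cA)$ sends $\exp x$ to $\exp(x^*)$ (one checks that $t \mapsto \exp(tx)^*$ is a one-parameter group with initial velocity $x^*$), we obtain $\eta(\exp x)^* = \exp(-x) = \eta(\exp x)^{-1}$ for $x \in \fk_\cA$. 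The set of $g$ with $\eta(g)^* = \eta(g)^{-1}$ is a subgroup containing a neighborhood of $\1$, hence all of the connected group $K_\cA$.

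Granting this, the first direction is immediate: if $(\pi_\C, \cH)$ is a holomorphic $*$-representation, then $\pi := \pi_\C \circ \eta$ is a composite of smooth maps, hence smooth into $\GL(\cH)$, and $\pi(g)^* = \pi_\C(\eta(g)^*) = \pi_\C(\eta(g)^{-1}) = \pi(g)^{-1}$ shows that $\pi$ takes values in $\U(\cH)$; thus $\pi$ is a bounded unitary representation of $K_\cA$. For the converse I would apply the universal property with $H = \GL(\cH)$, which is a complex Lie group with exponential function: a bounded unitary representation $\pi \: K_\cA \to \U(\cH) \subseteq \GL(\cH)$ is a smooth morphism, so there is a unique holomorphic homomorphism $\pi_\C \: G(\cA) \to \GL(\cH)$ with $\pi_\C \circ \eta = \pi$. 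Because this extension is unique, the two assignments are mutually inverse, and the asserted bijection follows once we know that $\pi_\C$ is a $*$-representation.

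Proving the $*$-property is the step I expect to require the most care, and I would handle it by a second appeal to uniqueness. Consider $\beta(g) := \pi_\C(g^*)^*$. Since $g \mapsto g^*$ on $G(\cA)$ and $A \mapsto A^*$ on $\GL(\cH)$ are both antiholomorphic antiautomorphisms, the composite $\beta$ is a homomorphism (the two product-reversals cancel) and is holomorphic (the two conjugate-linear differentials, with the complex-linear $\dd\pi_\C$ in between, compose to a complex-linear differential). Using $\eta(g)^* = \eta(g)^{-1}$ and the unitarity of $\pi$, one computes $\beta(\eta(g)) = \pi_\C(\eta(g)^{-1})^* = (\pi(g)^{-1})^* = \pi(g)$, so $\beta \circ \eta = \pi = \pi_\C \circ \eta$. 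The uniqueness clause of the universal property then forces $\beta = \pi_\C$, i.e.\ $\pi_\C(g^*) = \pi_\C(g)^*$.

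Finally, for the commutants I would pass to the Lie algebra. Since $\cA = \cA_\R \oplus i\cA_\R$, the algebra $\g(\cA)$ is the complexification $(\fk_\cA)_\C$, and $\dd\pi_\C$ is the complex-linear extension of $\dd\pi$. An operator $T$ commutes with $\pi(K_\cA)$ iff it commutes with $\dd\pi(\fk_\cA)$ (differentiate, and re-integrate using connectedness of $K_\cA$), iff it commutes with $\Spann_\C \dd\pi(\fk_\cA) = \dd\pi_\C(\g(\cA))$, iff it commutes with $\pi_\C(G(\cA))$ (again by connectedness of $G(\cA)$). Hence $\pi(K_\cA)' = \pi_\C(G(\cA))'$, and by Schur's lemma $\pi$ is irreducible precisely when $\pi_\C$ is.
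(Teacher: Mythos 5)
Your proof is correct and takes essentially the same route as the paper: the paper offers no written-out proof, stating the proposition as an immediate consequence of the universal property of $\eta$ from Remark~\ref{rem:complexif}, and your argument is exactly that universal-property argument carried out in detail (unitarity of $\pi$ from $\eta(g)^*=\eta(g)^{-1}$, the $*$-compatibility of the holomorphic extension via the uniqueness clause, and the commutant identification by passing to the derived representations and using that $\g(\cA)$ is the complexification of $\fk_\cA$).
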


Combining Theorem~\ref{thm:tenspro} with the preceding 
proposition, we obtain: 

\begin{thm} \mlabel{thm:6.10} 
 Let $\cA$ be a commutative involutive cia. 
Then every bounded irreducible unitary representation $(\pi, \cH)$ 
of $K_\cA$ is a finite tensor product of evaluation representations 
corresponding to irreducible representations of $K$. 
In particular, $\cH$ is finite-dimensional. 
\end{thm}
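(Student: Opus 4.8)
The plan is to treat Theorem~\ref{thm:6.10} as the group-level shadow of Theorem~\ref{thm:tenspro}, passing between the two through the Weyl-type correspondence of the preceding proposition. First I would start with a bounded irreducible unitary representation $(\pi,\cH)$ of the $1$-connected group $K_\cA$ and invoke the preceding proposition to produce the associated holomorphic $*$-representation $(\pi_\C,\cH)$ of $G(\cA)$. Since the commutants $\pi(K_\cA)'$ and $\pi_\C(G(\cA))'$ coincide, $\pi_\C$ is again irreducible, and its differential $\dd\pi_\C \: \g(\cA) \to B(\cH)$ is by construction a bounded $*$-representation of $\g(\cA)$. It is irreducible as well, since a closed $\dd\pi_\C$-invariant subspace is automatically invariant under the connected group $G(\cA)$, so that $\dd\pi_\C$ and $\pi_\C$ share the same commutant.

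Next I would apply Theorem~\ref{thm:tenspro} directly to $\dd\pi_\C$. It yields a finite set $\bx \subseteq \Gamma_\cA^*$ of involutive characters and, for each $\chi \in \bx$, a finite-dimensional irreducible $*$-representation $\rho_\chi$ of $\g$ with dominant integral highest weight, such that $\dd\pi_\C \simeq \bigotimes_{\chi \in \bx} \rho_\chi \circ \ev_\chi$. Because $\bx$ is finite and each $\rho_\chi$ is finite-dimensional, this tensor product is finite-dimensional, which already establishes the final assertion that $\cH$ is finite-dimensional.

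It then remains to integrate this Lie-algebra decomposition back to $K_\cA$. For each $\chi \in \bx$ the involutive algebra homomorphism $\ev_\chi = \id \otimes \chi \: \g(\cA) \to \g$ restricts to a homomorphism $\fk_\cA \to \fk$, which by $1$-connectedness of $K_\cA$ together with the universal property recorded in Remark~\ref{rem:complexif} integrates to a smooth morphism $K_\cA \to K$. Likewise, the finite-dimensional irreducible $*$-representation $\rho_\chi$ of $\g$ is the differential of an irreducible unitary representation of the $1$-connected compact group $K$; composing the two yields a group-level evaluation representation of $K_\cA$. Their tensor product is a unitary representation of $K_\cA$ whose differential is $\dd\pi$, and since $K_\cA$ is $1$-connected a unitary representation is determined by its differential, so this tensor product is unitarily equivalent to $\pi$. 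This exhibits $\pi$ as a finite tensor product of evaluation representations attached to irreducible representations of $K$.

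I expect the only genuine obstacle to be the bookkeeping of the last paragraph: verifying that the abstract tensor-product decomposition on the Lie-algebra side corresponds termwise to bona fide group evaluation representations, and that the $1$-connectedness argument closes the gap between the Lie algebra and the group. Everything of mathematical substance has already been done in Theorem~\ref{thm:tenspro}; the residual work is the routine passage through the exponential correspondence, which is harmless precisely because all representations in sight are finite-dimensional.
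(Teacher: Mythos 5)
Your proposal is correct and follows exactly the route the paper takes: the paper's proof of this theorem is literally "Combining Theorem~\ref{thm:tenspro} with the preceding proposition, we obtain," i.e.\ pass to the holomorphic $*$-representation of $G(\cA)$ via the Weyl-type correspondence, decompose its differential by Theorem~\ref{thm:tenspro}, and integrate back using $1$-connectedness. Your write-up simply fills in the same bookkeeping the paper leaves implicit, so there is nothing to flag.
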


\begin{rem} Let $K$ be a compact Lie group with Lie algebra $\fk$. 
Then Tychonov's Theorem implies that the product group $K^X$ of 
{\em all} maps $X \to K$ is a compact group. 
Any irreducible continuous unitary representation $(\pi, \cH)$ of 
$K^X$ is finite-dimensional and factors through a projection to 
some quotient Lie group $K^\bx$, where $\bx \subeq X$ is a finite subset. 
Hence there exist irreducible unitary representations 
$(\rho_x, V_x)$, labeled by $x \in \bx$, such that 
$\pi(g) = \otimes_{x \in \bx} \rho_x(g_x)$. 

It now follows from Corollary~\ref{cor:tenspro} that
every 
irreducible bounded unitary representation of the 
connected Lie group $C^\infty(X,K)_0$, where $X$ is a compact manifold, 
extends to a continuous representation of the compact group 
$K^X$. This observation may be of some interest in the context
 of Loop Quantum Gravity where one works with compactified 
gauge groups of the form $K^X$ (cf.\ \cite{Th08}). 
As we shall see in Section~\ref{sec:4} below, 
this picture changes for the 
group $C^\infty_c(X,K)$ when $X$ is a non-compact manifold. In this case 
there exist bounded irreducible unitary representations that do not 
extend to the compact group $K^X$.  However, it turns out that they all factor through 
representations of groups of the type $K^{(\bx)}$
with $\bx \subset X$ a \emph{locally} finite subset. 

\end{rem}

\section{Irreducible \texorpdfstring{$*$}{*}-representations of \texorpdfstring{$\fsl_2(\cA)$}{sl2(A)}} 

\mlabel{sec:3}

In this section we apply Theorem~\ref{thm:2.5} on multiplicative characters 
to obtain a classification of 
the inducible functionals of
$\fsl_2(\cA)$,
where $\cA$ is a unital involutive commutative cia. 
We have already seen how this was used in Theorem~\ref{thm:ind-charac} 
to 
obtain the corresponding result for tensor products 
$\g(\cA) = \fk_\C \otimes \cA$, where $\fk$ is a compact Lie algebra and 
$\g = \fk_\C$. Theorem~\ref{thm:ind-charac} in turn led to the characterization
of bounded irreducible $*$-representations in 
Theorem~\ref{thm:tenspro}. 


Throughout this section, $\cA$ will be a unital commutative involutive cia.

\subsection{\texorpdfstring{The group $\tilde\SL_2(\cA)$}{The group SL2(A)}} 

We use the standard notation for the basis elements 
of $\fsl_2(\C)$: 
\[ h := \pmat{1 & 0 \\ 0 & -1}, \quad 
e := \pmat{0 & 1 \\ 0 & 0} \quad \mbox{ and } \quad 
f := \pmat{0 & 0 \\ 1 & 0} \] 
satisfying the relations 
$[h,e] = 2e$,
$[h,f] = -2f$ and 
$[e,f] = h$.
In the notation of Section~\ref{sec:2}, we have 
$\g = \fg^+ \oplus \g^0 \oplus \fg^-$ with\footnote{Recall that 
$\fg^+$ is the span of the \emph{negative} roots, cf. the footnote 
\ref{note1}.}
\[ \g = \fsl_2(\C), \quad 
\g^0 = \cA h, \quad 
\fg^- = \cA e \quad \mbox{ and } \quad 
\fg^+ = \cA f.\] 

Using Gauss decomposition, we see that the 
identity component $\SL_2(\cA)_0$ of the Lie group $\SL_2(\cA)$
is generated by matrices of the form 
\[
\pmat{1 & a \\ 0 & 1}, \quad \pmat{1 & 0 \\ b & 1},\quad a,b \in \cA\,. 
\]
Indeed, this follows from 
\[ \pmat{ a & b \\ c & d} \in e^{\cA f} \Delta(a) e^{\cA e}\quad \mbox{ for } \quad 
\Delta(a) := \pmat{a & 0 \\ 0 & a^{-1}} \quad \mbox{ and } \quad a \in \cA^\times\]  
and 
\begin{equation}
  \label{eq:diagelt}
\Delta(a) = \pmat{a & 0\\ 0 & a^{-1}}= \pmat{1 & 0 \\ a^{-1} - 1 & 1} 
\pmat{1 & 1 \\  0  & 1} 
\pmat{1 & 0 \\  a - 1 & 1} 
\pmat{1 & - a^{-1}\\ 0  & 1} \quad \mbox{ for } \quad 
a \in \cA^\times. 
\end{equation}
We write $q \: \tilde \SL_2(\cA) \to \SL_2(\cA)_0$ for the universal 
covering homomorphism  with $\L(q) = \id_{\fsl_2(\cA)}$ and 
$e_{12} \: (\cA,+) \to \tilde \SL_2(\cA)$ 
for the unique continuous homomorphism satisfying
\[
q(e_{12}(x)) = \pmat{1 & x \\ 0 & 1} \quad \mbox{ for } \quad x \in \cA.
\]
We likewise define $e_{21} \: \cA \to \tilde \SL_2(\cA)$. 
For $a \in \cA^\times$, we define 
$\tilde \Delta(a) \in \tilde \SL_2(\cA)$ by 
\begin{equation}
  \label{eq:tilded}
\tilde\Delta(a):= e_{21}(a^{-1}-1) e_{12}(1) e_{21}(a-1) e_{12}(-a^{-1})
\end{equation}
and observe that $\tilde \Delta(\1)= \1$.
In view of \eqref{eq:diagelt}, 
we have 
$q \circ \tilde \Delta =  \Delta$ on  $\cA^\times$. 

This means that the restriction $\tilde \Delta \: \cA^\times_0 \to \tilde \SL_2(\cA)$ to the identity 
component $\cA^\times_0$ is the unique 
continuous lift of the homomorphism $\Delta \: \cA^\times_0 \to \SL_2(\cA)_0$ 
to the simply connected covering group $\tilde\SL_2(\cA)$, 
hence in particular a morphism of Lie groups (cf.\ \cite{GN13}). This in turn 
implies that 
\begin{equation}
  \label{eq:exprel}
\tilde \Delta(e^a) = \exp_{\tilde \SL_2(\cA)}(ah) \quad \mbox{ for } \quad a \in \cA,
\end{equation}
where $e^x = \sum_{n = 0}^\infty \frac{x^n}{n!}$ 
is the exponential function of the Lie group 
$\cA^\times$ and $\exp_{\tilde \SL_2(\cA)}$ is the exponential function of the Lie group 
$\tilde \SL_2(\cA)$.


\subsection{Inducible functionals on \texorpdfstring{$\fsl_2(\cA)$}{sl2(A)}} 

Let $(\pi, \cH)$ be a bounded irreducible $*$-representation of 
$\fsl_2(\cA)$. We consider the closed subspace 
\[ \cE := \cH^{\fg^-} = \cH^{\cA e} = \ker(\pi(\cA e))\] 
and recall from Proposition~\ref{prop:6.1} that 
the representation 
$(\rho, \cE)$ of $\g^0 = \cA h$ is one-dimensional, hence can 
be written as 
$\rho(ah) = \lambda(a) \1$  for some $\lambda \in \cA'$. 
In this subsection we shall determine which linear functionals 
arise from this construction. 

To simplify notation, we now  put 
\[ G := \tilde\SL_2(\cA) \quad \mbox{ and } \quad G^0 := Z_G(h)\,.\]
In order to determine which $\lambda \in \cA'$ have the above property, 
we need to 
go to the level of Lie groups, i.e.\ we need 
the
holomorphic $*$-representation 
$\pi_{G} \: \tilde \SL_2(\cA) \to \GL(\cH)$ 
with $\dd \pi_{G} = \pi$ (cf.\ Remark~\ref{rem:complexif}). 

Let $P \in B(\cH)$ denote the orthogonal projection to $\cE$. 
Then 
\[ \phi \: G \to B(\cE), \quad 
\phi(g) := P \pi_G(g) P \]  
is a holomorphic function. 
Observe that the homomorphism 
$\tilde \Delta \: \cA^\times_0 \to G$ defines an isomorphism 
$  
\tilde \Delta \: \cA^\times_0 \to (G^0)_0
$
of locally exponential complex Lie groups. 


\begin{lem} \mlabel{lem:4.3} 
With $G$, $\phi$ and $\rho$ as above, we have:
  \begin{itemize}
    \item[\rm(i)] $\phi(g_- g g_+) = \phi(g)$ for $g \in G$, 
$g_+ \in e_{12}(\cA)$, $g_- \in e_{21}(\cA)$. 
 \item[\rm(ii)] $\rho_G = \phi\res_{G^0}  \: G^0 \to \GL(\cE)$ is a representation 
with $\dd\rho_G = \rho$. 
    \item[\rm(iii)] $\phi(\tilde \Delta(e^x)) = \rho_G(\exp(xh))$ for 
$x \in \cA$. 
    \item[\rm(iv)] The group homomorphism $\rho_G \circ \tilde \Delta \: \cA^\times_0 \to 
\GL(\cE)$ 
extends to a polynomial function $F \: \cA \to B(\cE)$. 
  \end{itemize}
\end{lem}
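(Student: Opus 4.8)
\emph{Plan.} The whole lemma rests on two consequences of Proposition~\ref{prop:6.1}(i) transported to the group level: that $\pi_G(e_{12}(x))$ fixes $\cE$ pointwise, and that $P\pi_G(e_{21}(y)) = P$. For the first, $\pi_G\circ e_{12}$ is a continuous homomorphism of $(\cA,+)$ with derivative $x\mapsto\pi(xe)$, so $\pi_G(e_{12}(x)) = \exp(\pi(xe))$; since $\cE = \ker\pi(\cA e)$, this operator acts as the identity on $\cE$, whence $\pi_G(g_+)P = P$ for $g_+\in e_{12}(\cA)$. For the second, $\pi_G(e_{21}(y)) = \exp(\pi(yf))$, and $\cE = (\pi(\cA f)\cH)^\bot$ gives $\pi(\cA f)\cH\subseteq\cE^\bot$, hence $P\pi(\cA f) = 0$ and $P\pi_G(e_{21}(y)) = P$. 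Part~(i) is then immediate:
\[ \phi(g_- g g_+) = P\pi_G(g_-)\pi_G(g)\pi_G(g_+)P = P\pi_G(g)P = \phi(g). \]

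For (ii) I would use that $G^0 = Z_G(h)$ acts by $\Ad$ preserving the $\ad h$-eigenspaces of $\fsl_2(\cA)$; in particular $\Ad(g_0)(\cA e) = \cA e$, so the equivariance $\pi_G(g_0)\pi(\xi)\pi_G(g_0)^{-1} = \pi(\Ad(g_0)\xi)$ shows $\pi_G(g_0)$ preserves $\cE = \ker\pi(\cA e)$. Thus $\phi(g_0) = \pi_G(g_0)\res_\cE\in\GL(\cE)$, and $\rho_G := \phi\res_{G^0}$ is multiplicative, i.e.\ a representation; differentiating $\rho_G(\exp(t\xi)) = \exp(t\pi(\xi))\res_\cE$ at $t=0$ for $\xi\in\L(G^0) = \g^0$ yields $\dd\rho_G = \rho$. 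Part~(iii) is then a substitution: by \eqref{eq:exprel} we have $\tilde\Delta(e^x) = \exp(xh)\in G^0$, so $\phi(\tilde\Delta(e^x)) = \rho_G(\exp(xh))$.

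The only real work is (iv). Here I would feed the explicit formula \eqref{eq:tilded} into part~(i): deleting the outer factors $e_{21}(a^{-1}-1)$ and $e_{12}(-a^{-1})$ gives, for every $a\in\cA^\times$,
\[ \phi(\tilde\Delta(a)) = \phi(e_{12}(1)e_{21}(a-1)) = P\exp(\pi(e))\exp(\pi((a-1)f))P. \]
On $\cA^\times_0$ the left-hand side equals $(\rho_G\circ\tilde\Delta)(a)$, since there $\tilde\Delta(a)\in(G^0)_0\subseteq G^0$, so it remains to see that the right-hand side restricts a polynomial function defined on all of $\cA$. This is exactly where Proposition~\ref{prop:6.1}(iii) enters: with $N$ such that $\pi(\fg^+)^N = \pi(\cA f)^N = \{0\}$, the exponential truncates,
\[ \exp(\pi((a-1)f)) = \sum_{k=0}^{N-1}\frac{1}{k!}\pi((a-1)f)^k, \]
each summand being a continuous homogeneous polynomial of degree $k$ in $a-1$, hence in $a$. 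Since $\pi(e)^N = 0$ as well, $P\exp(\pi(e))$ is a fixed bounded operator, and $F(a) := P\exp(\pi(e))\exp(\pi((a-1)f))P$ is a continuous $B(\cE)$-valued polynomial of degree $<N$ extending $\rho_G\circ\tilde\Delta$.

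I expect the substantive point to be (iv), and within it the single observation that the $\fg^+$-nilpotency from Proposition~\ref{prop:6.1}(iii) converts the group exponential into an honest polynomial; parts (i)--(iii) are bookkeeping once the weight-space behaviour of $e_{12}(\cA)$ and $e_{21}(\cA)$ and the $\Ad$-invariance of $\cE$ under $G^0$ are recorded. The detail to handle with care is that ``polynomial'' is meant as a finite sum of continuous homogeneous maps between locally convex spaces, so that the continuity of $\pi$ is what guarantees $F$ is admissible for the later application of Theorem~\ref{thm:2.5}.
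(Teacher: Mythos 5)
Your proposal is correct and follows essentially the same route as the paper: parts (i)--(iii) are the same bookkeeping (the paper handles the $e_{21}$-side of (i) via the symmetry $\phi(g^*)=\phi(g)^*$ with $e_{12}(\cA)^*=e_{21}(\cA)$, while you verify $P\pi_G(e_{21}(\cA))=P$ directly from $\cE=(\pi(\cA f)\cH)^\bot$ --- an equivalent one-line use of Proposition~\ref{prop:6.1}(i)). For (iv) you reduce, exactly as the paper does, to $\phi(\tilde\Delta(a))=P\,e^{\pi(e)}e^{\pi((a-\1)f)}P$ via \eqref{eq:tilded} and (i), and obtain polynomiality from the nilpotency $\pi(\cA f)^N=\{0\}$ of Proposition~\ref{prop:6.1}(iii).
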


\begin{prf}We prove the above point by point.
\begin{itemize}
 \item[(i)]
 By definition, the elements of $\cE$ are fixed by 
$e_{12}(\cA)$, so that $\phi(gg_+) = \phi(g)$ for $g \in G$ and $g_+ \in e_{12}(\cA)$. 
From $\phi(g)^* = \phi(g^*)$ and $e_{12}(\cA)^* = e_{21}(\cA)$, we now have (i).
 \item[(ii)]
 As $\pi_G(G^0)$ normalizes $\fg^-$, it preserves $\cE$, and this 
proves (ii).
\item[(iii)]
follows from (ii) and \eqref{eq:exprel}.
\item[(iv)]
From (i) and the definition of $\tilde \Delta$, we derive 
\[ \phi(\tilde \Delta(a)) 
= \phi(e_{12}(1) e_{21}(a-1)) 
=P \pi_G(e_{12}(1) e_{21}(a-1)) P 
=P  e^{\pi(e)} e^{\pi((a-\1)f)}P.\] 
That this function is polynomial in $a$ 
follows from the existence of a natural number $N \in \N$ with 
${\pi(\cA f)^{N} = \{0\}}$ 
(Proposition~\ref{prop:6.1}(iii)). 
\end{itemize}
\end{prf}

\begin{prop} \mlabel{prop:6.4} 
For $\fsl_2(\cA)$, any inducible functional 
$\lambda \: \cA \to \C$ is a finite sum of characters. 
Any finite sum of involutive characters is inducible. 
\end{prop}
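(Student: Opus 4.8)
The plan is to pass to the group $G = \tilde\SL_2(\cA)$ and exploit that, for an irreducible representation, the weight space $\cE$ is one-dimensional (Proposition~\ref{prop:6.1}(v)), so that $B(\cE) \cong \C$ and $\GL(\cE) \cong \C^\times$. For the forward direction I would realize an inducible $\lambda$ by a bounded irreducible $*$-representation $(\pi, \cH)$ with $\rho(ah) = \lambda(a)\1$ on the one-dimensional space $\cE$. Lemma~\ref{lem:4.3}(iv) then produces a continuous polynomial map $F \: \cA \to B(\cE) \cong \C$ extending the group homomorphism $\rho_G \circ \tilde\Delta \: \cA^\times_0 \to \GL(\cE) \cong \C^\times$. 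The first point to check is that $F$ is multiplicative on all of $\cA$: it is multiplicative on the open set $\cA^\times_0$ because $\rho_G \circ \tilde\Delta$ is a homomorphism of multiplicative groups, and since $(a,b) \mapsto F(ab)$ and $(a,b) \mapsto F(a)F(b)$ are continuous polynomial maps agreeing on the open set $\cA^\times_0 \times \cA^\times_0$, they agree everywhere.

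At this stage Theorem~\ref{thm:2.5} applies and factors $F = \chi_1 \cdots \chi_N$ into continuous characters $\chi_j \: \cA \to \C$. To recover $\lambda$, I would combine Lemma~\ref{lem:4.3}(iii), relation~\eqref{eq:exprel}, and $\dd\rho_G = \rho$ to obtain $F(e^x) = \rho_G(\exp(xh)) = e^{\lambda(x)}$ for all $x \in \cA$, while on the other hand $F(e^x) = \prod_j \chi_j(e^x) = e^{\sum_j \chi_j(x)}$, using that each continuous character satisfies $\chi_j(e^x) = e^{\chi_j(x)}$. Consequently the continuous linear functional $\sum_j \chi_j - \lambda$ takes values in $2\pi i \Z$; since $\cA$ is connected and this functional vanishes at $0$, its image is a connected subset of the discrete set $2\pi i \Z$ containing $0$, hence it vanishes identically and $\lambda = \sum_{j=1}^N \chi_j$ is a finite sum of characters.

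For the converse it suffices to show that a single involutive character $\chi$ is inducible, since Lemma~\ref{lem:sum} then upgrades this to an arbitrary finite sum of involutive characters. For one $\chi$ I would take the evaluation representation $\rho_{\mathrm{std}} \circ \ev_\chi$, where $\rho_{\mathrm{std}}$ is the standard two-dimensional $*$-representation of $\fsl_2(\C)$ (unitary on $\su_2$); involutivity of $\chi$ makes $\ev_\chi$ a $*$-homomorphism, and surjectivity of $\ev_\chi$ onto $\fsl_2(\C)$ makes the composite an irreducible bounded $*$-representation. Its weight space $\cE = \ker(\pi(\cA e)) = \ker \rho_{\mathrm{std}}(e)$ is one-dimensional with $\rho(ah) = \chi(a)\1$, so that $\lambda = \chi$ is inducible.

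The substance of the argument lies in the two inputs it rests on: the construction of the polynomial function $F$ in Lemma~\ref{lem:4.3}, and—above all—Theorem~\ref{thm:2.5}, whose sole purpose, and the reason for developing Section~\ref{sec:1}, is precisely to factor such a multiplicative polynomial into characters over a general continuous inverse algebra. Within the present proof the only genuinely delicate steps are the promotion of multiplicativity from the open unit group $\cA^\times_0$ to all of $\cA$ by the polynomial identity principle, and the removal of the $2\pi i \Z$-ambiguity in passing from $e^{\lambda} = e^{\sum_j \chi_j}$ to $\lambda = \sum_j \chi_j$ via connectedness of $\cA$.
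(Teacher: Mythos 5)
Your proposal is correct and follows essentially the same route as the paper: Lemma~\ref{lem:4.3}(iv) produces the polynomial map $F$, multiplicativity is extended from $\cA^\times_0$ to $\cA$ by polynomial/analytic continuation, the factorization machinery of Section~\ref{sec:1} gives $F=\chi_1\cdots\chi_N$, and the converse is handled by the two-dimensional evaluation representation $\rho_{\mathrm{std}}\circ\ev_\chi$ together with Lemma~\ref{lem:sum}. The only (cosmetic) differences are that the paper invokes Corollary~\ref{cor:4.9} (which reduces to Theorem~\ref{thm:2.5} after a homogeneity reduction) rather than Theorem~\ref{thm:2.5} directly, and recovers $\lambda=\sum_j\chi_j$ by differentiating $F$ at $\1$ instead of your $2\pi i\Z$-valued functional and connectedness argument.
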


\begin{prf} Since $F$ is a polynomial map to $B(\cE) \cong \C$ 
and $F(ab) = F(a) F(b)$ holds for 
$a,b \in \cA^\times_0 = e^\cA$, analytic continuation implies that 
$F(ab) = F(a) F(b)$ for $a,b \in \cA.$
On the other hand, $F(e^x) = e^{\lambda(x)}$ for $x \in \cA$. 
Now Corollary~\ref{cor:4.9} implies the existence 
of $\chi_1, \ldots, \chi_N \in \Gamma_\cA$ with 
$F = \prod_{j = 1}^N \chi_j$. Differentiating in $\1$, we obtain  
\[ \lambda = \chi_1 + \ldots + \chi_N.\] 
For the converse, we only have to show that any involutive 
character $\chi$ is inducible (cf. Lemma~\ref{lem:sum}). This follows from the fact that 
$\ev_\chi \: \fsl_2(\cA) \to \fsl_2(\C) \subeq \gl_2(\C)$ is a 
$2$-dimensional $*$-representation with 
highest weight $\lambda = \chi$. 
\end{prf}

\begin{rem} \mlabel{rem:non-invol} 
Every inducible character is involutive.
Indeed, suppose that
$\chi \: \cA \to \C$ is a non-involutive character. 
The pair $(\chi, \chi^*)$ then defines a surjective homomorphism 
\[ \ev \: \fsl_2(\cA) \to \fsl_2(\bD) \cong \fsl_2(\C) \oplus \fsl_2(\C), \] 
where the involution on $\bD = \C^2$ is given by $(z_1, z_2)^* = (\oline{z_2}, \oline{z_1})$ 
(cf.\ Remark~\ref{rem:bd} and Lemma~\ref{lem:surject}). 
Then $\fsl_2(\bD) \cong \fsl_2(\C)^2$, but the corresponding real form is 
\[ \su_2(\bD) = \su_2(\C) \otimes_\R \bD_\R 
= (\su_2(\C) \otimes (1,1)) \oplus 
 (\su_2(\C) \otimes (i,-i)) 
= \{ (z, - z^*) \: z \in \fsl_2(\C) \}. \] 
As a real Lie algebra, we thus obtain 
$\su_2(\bD) \cong \fsl_2(\C)$, which is a simple real Lie algebra with 
no non-zero bounded unitary representation. This means that 
$\chi$ is not inducible.
\end{rem}

The following theorem closes the gap in the characterization of 
inducible functionals in Proposition~\ref{prop:6.4}. 

\begin{thm} \mlabel{thm:6.x} For $\fsl_2(\cA)$, any inducible functional 
$\lambda \: \cA \to \C$ is a finite sum of involutive algebra characters. 
Conversely, any such sum is inducible. 
\end{thm}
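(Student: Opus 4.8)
The converse statement—that any finite sum of involutive characters is inducible—is already contained in Proposition~\ref{prop:6.4}, so I only have to treat the forward direction, where the task is to upgrade the conclusion of Proposition~\ref{prop:6.4} from ``sum of characters'' to ``sum of \emph{involutive} characters''. The plan is to start from a decomposition $\lambda = \sum_j m_j \chi_j$ with pairwise distinct characters $\chi_j \in \Gamma_\cA$ and multiplicities $m_j \in \N$, furnished by Proposition~\ref{prop:6.4}. Since $(\rho,\cE)$ is a $*$-representation of $\g^0 = \cA h$ and $h^* = h$, the functional $\lambda$ is involutive, i.e.\ $\lambda(a^*) = \overline{\lambda(a)}$. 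Passing to conjugates and using that distinct characters are linearly independent (cf.\ Lemma~\ref{lem:surject}), I conclude that the involution $\chi \mapsto \chi^*$ permutes the $\chi_j$ and preserves the multiplicities $m_j$; that is, the multiset $\{(\chi_j,m_j)\}$ is $*$-stable. The involutive $\chi_j$ are harmless, so everything reduces to excluding a conjugate pair $\{\chi,\chi^*\}$ with $\chi \neq \chi^*$ occurring with a common multiplicity $m \geq 1$.

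The heart of the argument is to show that $\pi_\lambda$ factors through the evaluation at the finite set $\bx := \{\chi_j\}$, i.e.\ that $\pi_\lambda$ annihilates the ideal $\fsl_2(\cI)$, where $\cI := \bigcap_j \ker \chi_j \trianglelefteq \cA$. Because the multiset of characters is $*$-stable, the ideal $\cI$ is $*$-stable and $\lambda\!\mid_{\cI} = 0$. Now $\cE = \ker \pi(\cA e)$ is immediately annihilated by $\pi(\cI e)$, and by $\pi(\cI h)$ since $\lambda\!\mid_\cI = 0$; the one nontrivial point is that $\pi(\cI f)$ also kills $\cE$. For $b \in \cI$ and a unit vector $v \in \cE$ I would compute, using $(bf)^* = b^* e$, $b^* \in \cI$, and $[e,f]=h$,
\[ \|\pi(bf)v\|^2 = \langle v, \pi(b^* e)\pi(bf) v\rangle = \langle v, \pi(b^*b\, h) v\rangle = \lambda(b^* b) = 0, \]
since $b^* b \in \cI$. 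Hence $\pi(\fsl_2(\cI))\cE = 0$, and a standard reduction—commuting $\pi(\fsl_2(\cI))$ to the right through the generators $\pi(\cA f)$ of $\cH$ over $\cE$ and using that $\fsl_2(\cI)$ is an ideal—propagates this to $\pi(\fsl_2(\cI)) = 0$ on all of $\cH$. Thus $\pi_\lambda$ descends to a bounded $*$-representation of $\fsl_2(\cA/\cI)$, with $\cA/\cI \cong \C^{\bx}$.

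With the factorization in hand, the conjugate pair $\{\chi,\chi^*\}$ singles out a $*$-stable summand $\bD \cong \C^2$ of $\cA/\cI$ carrying the swap-conjugate involution of Remark~\ref{rem:bd}, and hence a real subalgebra $\su_2(\bD) \subseteq \su_2(\cA/\cI)$ with $\su_2(\bD) \cong \fsl_2(\C)$ as a real Lie algebra (Remark~\ref{rem:non-invol}). The restriction of $\pi_\lambda$ to $\su_2(\bD)$ is a bounded unitary representation of the real simple Lie algebra $\fsl_2(\C)$, which by Remark~\ref{rem:non-invol} must vanish; yet on $\cE$ the associated $\g^0$-weight is the nonzero functional $m(\chi + \chi^*)$, so $\pi_\lambda$ acts nontrivially on $\su_2(\bD)$—a contradiction. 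Therefore no conjugate pair occurs and every $\chi_j$ is involutive. I expect the main obstacle to be the factorization step, and specifically the identity $\pi(\cI f)\cE = 0$: it is exactly here that the involutivity of $\lambda$ (through the $*$-stability of $\cI$, which yields $b^* b \in \cI$) is indispensable, and it is what separates the genuine $*$-representation theory from the purely algebraic statement of Proposition~\ref{prop:6.4}.
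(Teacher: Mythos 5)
Your proposal is correct and follows essentially the same route as the paper: decompose $\lambda$ via Proposition~\ref{prop:6.4}, use involutivity of $\lambda$ and linear independence of characters to get a $*$-stable multiset, kill $\fsl_2(\cI)$ for $\cI = \bigcap_j \ker\chi_j$ via the computation $\|\pi(f\otimes b)v_\lambda\|^2 = \lambda(b^*b) = 0$ together with cyclicity of $v_\lambda$, and then rule out non-involutive pairs by identifying $\su_2(\bD) \cong \fsl_2(\C)$, which admits no non-zero bounded unitary representations. The only cosmetic difference is that you phrase the last step as a contradiction with the non-vanishing weight $m(\chi+\chi^*)$, while the paper concludes directly that the number of conjugate pairs is zero.
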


\begin{prf} In view of Proposition~\ref{prop:6.4}, it only remains to show that,  
if a functional $\lambda$ is inducible and a finite sum of characters, 
then it can be written as a finite sum of involutive characters. 

Let $(\pi_\lambda, \cH_\lambda)$ be the irreducible $*$-representation 
of $\fsl_2(\cA)$ with highest weight $\lambda$ and unit highest weight 
vector $v_\lambda$ spanning $\cE_\lambda$. 
According to
Proposition~\ref{prop:6.4}, we then have
\[ \lambda = \chi_1 + \cdots + \chi_N \quad \mbox{ with } \quad 
\chi_j \in \Gamma_\cA\,.\]   
We rewrite 
\[ \lambda = m_1 \chi_1 + \cdots + m_k \chi_k, \quad \chi_j \in \Gamma_\cA, 
m_j \in \N_0, \] 
where the $\chi_j$ are pairwise 
different, hence linearly independent in the dual space $\cA'$. 
Since $\lambda = \lambda^*$, the relation 
\[ m_1 \chi_1 + \cdots + m_k \chi_k=  m_1 \chi_1^* + \cdots + m_k \chi_k^* \] 
implies the existence of an involution 
$\sigma\in S_k$ with
$\chi_j^* = \chi_{\sigma(j)}$ for $j =1,\ldots, k$. 
If $\sigma(j) = j$, then $\chi_j$ is involutive;  
if $\sigma(j) \not= j$, then $\chi_j^* = \chi_{\sigma(j)}$. 
It follows in particular that $m_j = m_{\sigma(j)}$. We may thus write 
\[ \lambda = 
\sum_{j = 1}^\ell a_j \eta_j + 
\sum_{j = 1}^n b_j (\gamma_j + \gamma_j^*), \quad 
a_j, b_j \in \N_0,\] 
where the $\eta_j$ are involutive and $\gamma_j \not=\gamma_j^*$. 
This shows in particular that the ideal 
\begin{equation}
  \label{eq:ideal}
 \cI := \bigcap_{j = 1}^N \ker \chi_j  
= \bigcap_i \ker \eta_i \cap \bigcap_j (\ker \gamma_j \cap \ker \gamma_j^*) 
\end{equation}
is $*$-invariant. 
Since $e\otimes \cI \subseteq \fg^-$, the Lie algebra
$e \otimes \cI + h \otimes \cI$ annihilates $v_\lambda$. This 
implies that the linear functional 
$\alpha(X) := \la \pi_\lambda(X)v_\lambda, v_\lambda \ra$ satisfies 
$\fsl_2(\cI) \subeq \ker \alpha.$ 
Next we observe that, for $a \in \cI$, 
\[ \| \pi_\lambda(f \otimes a)v_\lambda\|^2 
= \la \pi_\lambda(f \otimes a)^*\pi_\lambda(f \otimes a) v_\lambda, v_\lambda \ra 
= \la \pi_\lambda([(f \otimes a)^*, f \otimes a]) v_\lambda, v_\lambda \ra 
\in \alpha(\fsl_2(\cI)) = \{0\},\] 
whence 
$v_\lambda \in \cH_\lambda^{\fsl_2(\cI)}.$ 
Since $\fsl_2(\cI) \trile \fsl_2(\cA)$ is an ideal, 
the subspace $\cH_\lambda^{\fsl_2(\cI)}$ is invariant under 
$\fsl_2(\cA)$. As it contains the cyclic vector $v_\lambda$, 
it follows that $\fsl_2(\cI) \subeq \ker \pi_\lambda$ 
(cf.\ \cite[Lemma~IX.1.3]{Ne00} for similar arguments). 
Therefore the representation $\pi_\lambda$ factors 
through a representation $\oline\pi_\lambda$ of the 
involutive quotient algebra $\fsl_2(\cA/\cI)$. 
We now have to understand the structure of this algebra. 

With \eqref{eq:ideal} and Lemma~\ref{lem:surject}, we see that 
$\codim \cI = \ell + 2 n$. Accordingly, 
$\cA/\cI$ is isomorphic to the algebra 
$\C^\ell \oplus \bD^n$ with the involution 
\[ (x_1, \ldots, x_\ell, y_1, \ldots, y_n, z_1, \ldots, z_n)^* 
= (\oline{x_1}, \ldots, \oline{x_\ell}, \oline{z_1}, \ldots, \oline{z_n}, 
\oline{y_1}, \ldots, \oline{y_n}).\] 
We thus obtain 
\[ \su_2(\cA/\cI) \cong \su_2(\C)^\ell \oplus \fsl_2(\C)^n\]  
(cf.\ Remark~\ref{rem:non-invol}). 
Since $\fsl_2(\C)$ has no non-zero bounded $*$-representations, 
$\oline\pi_\lambda$ is trivial on the corresponding 
factors, and this in turn implies that $n = 0$. 
\end{prf}

\section{Lie algebras of smooth sections} 

\mlabel{sec:4} 

Building on Corollary~\ref{cor:tenspro}, 
we now extend our classification results to 
Lie algebras of smooth sections of Lie algebra bundles $\fK \to X$,
where the typical fiber $\fk$ of $\fK$ is compact semisimple, 
and $X$ is a 
$\sigma$-compact smooth manifold with compact boundary 
$\partial X$. 
This includes in particular the Fr\'echet--Lie algebras   
$\gau(P)$ of infinitesimal gauge transformations of 
principal bundles $P \to X$ with compact semisimple structure group~$K$. 

\subsection{Lie algebra bundles}

Let $q \: \fK \to X$ be a smooth 
Lie algebra bundle over 
$X$ 
whose typical fiber $\fk$ is a finite-dimensional Lie algebra. 
Let $\Gamma(\fK)$ denote the space of 
sections $s \: X \to \fK$ that 
are smooth on the interior $X^\circ$, and whose derivatives extend 
continuously to the boundary.
We endow $\Gamma(\fK)$  
with the smooth compact open topology obtained from the embedding 
\[ \Gamma(\fK) \into \prod_{n \in \N_0} C(T^n X, T^n \fK), \quad 
s \mapsto (T^n s)_{n \in \N_0},\] 
and the compact open topology on the spaces $C(T^n X, T^n \fK)$. 
This turns $\Gamma(\fK)$ into a Fr\'echet--Lie algebra with respect to the 
pointwise bracket 
\[ [s_1, s_2](x) := [s_1(x), s_2(x)]\]  
(cf.\ \cite[Thm.~II.2.7]{Ne06}). 

We write $X = \bigcup_{n \in \N} X_n$, where $X_n$ is a compact submanifold 
with boundary,
$X = \bigcup_n X_n$ and $X_n \subeq X_{n+1}^0$ for 
$n \in \N$. (This is possible because we required $\partial X$ to be 
compact.)
We also put $\fK_n := \fK\res_{X_n}$ and observe that the 
restriction map $r_n \: \Gamma(\fK) \to \Gamma(\fK_n)$ is surjective 
for every $n$ (cf.~\cite{Wo06}).
Therefore the embedding
\[ \Gamma(\fK) \to \prolim \Gamma(\fK_n) \] 
is a continuous bijective linear map between Fr\'echet spaces, 
hence a topological isomorphism by the Open Mapping Theorem 
(cf.\ \cite{Ru91}). 

The space $\Gamma_c(\fK)$ of compactly supported smooth functions is 
the union of the closed ideals 
\[ \Gamma(\fK)_{X_n}  := \{ s \in \Gamma(\fK) \:  \supp(s) \subeq X_n\} 
\trile \Gamma(\fK) \] 
which are Fr\'echet spaces. We endow 
$\Gamma_c(\fK) \cong \indlim \Gamma(\fK)_{X_n}$ with the 
corresponding locally convex direct limit topology 
which turns it into an LF-Lie algebra, i.e., an LF-space with a 
continuous Lie bracket (\cite[Cor.~F.24, Rem.~F.28]{Gl04}). 

The following proposition reduces the problem of describing 
the bounded unitary representations of $\Gamma(\fK)$ to the 
case where $X$ is a compact manifold with boundary. 

\begin{prop} \mlabel{prop:6.1b} 
For every bounded unitary representation 
$(\pi, \cH)$ of the Fr\'echet--Lie algebra 
$\Gamma(\fK)$, there exists a compact submanifold $Y \subeq X$ 
with boundary and a bounded unitary representation
$(\oline\pi, \cH)$ of $\Gamma(\fK\res_Y)$ such that 
$\pi(s) = \oline\pi(s\res_Y)$ for every $s \in \Gamma(\fK)$. 
\end{prop}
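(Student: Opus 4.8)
The plan is to exploit the projective-limit description $\Gamma(\fK) \cong \prolim \Gamma(\fK_n)$ recorded above, together with the mere \emph{continuity} of $\pi$, to localise the representation to a single compact piece $X_m$, and then to push $\pi$ forward along the surjective restriction map $r_m \: \Gamma(\fK) \to \Gamma(\fK_m)$. The submanifold $Y := X_m$ will then do the job.

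First I would extract a seminorm estimate from continuity. Since $(\pi, \cH)$ is a bounded unitary representation, $\pi \: \Gamma(\fK) \to \fu(\cH) \subeq B(\cH)$ is a continuous linear map into a normed space, so the $\pi$-preimage of the open unit ball of $B(\cH)$ is a $0$-neighbourhood and hence contains a set of the form $\{ s \: q_0(s) < \eps\}$ for some continuous seminorm $q_0$ on $\Gamma(\fK)$ and some $\eps > 0$. A routine rescaling argument then upgrades this to the global bound $\|\pi(s)\| \le \eps^{-1} q_0(s)$ for all $s$. Now I would use the projective-limit topology: every continuous seminorm on $\Gamma(\fK) \cong \prolim\Gamma(\fK_n)$ is dominated by a finite maximum of seminorms of the form $q_n \circ r_n$, and since $r_n$ factors through $r_m$ whenever $n \le m$, all of these can be pulled back to one index $m$. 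This yields a continuous seminorm $q_m$ on the Fréchet space $\Gamma(\fK_m)$ and a constant $C > 0$ with
\[ \|\pi(s)\| \le C\, q_m(r_m(s)) \qquad \text{for all } s \in \Gamma(\fK). \]

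With this estimate in hand, the factorisation is forced. If $s\res_{X_m} = 0$, i.e.\ $r_m(s) = 0$, then the right-hand side vanishes and so $\pi(s) = 0$; hence $\ker r_m \subeq \ker \pi$. As the bracket on $\Gamma(\fK)$ is pointwise, $\ker r_m = \{ s \: s\res_{X_m} = 0\}$ is an ideal, and $r_m$ is surjective by the construction recalled above. Consequently $\pi$ descends to a well-defined linear map $\oline\pi \: \Gamma(\fK_m) \to \fu(\cH)$, $\oline\pi(t) := \pi(s)$ for any lift $s$ with $r_m(s) = t$ (two lifts differ by an element of $\ker r_m \subeq \ker\pi$), so that $\pi(s) = \oline\pi(s\res_{X_m})$. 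That $\oline\pi$ is a Lie algebra homomorphism follows because $r_m$ is a surjective homomorphism and $\pi$ is one; that it takes values in $\fu(\cH)$ is inherited from $\pi$; and the displayed estimate reads $\|\oline\pi(t)\| \le C\, q_m(t)$, giving continuity. Thus $(\oline\pi, \cH)$ is a bounded unitary representation of $\Gamma(\fK\res_Y)$ with $Y = X_m$, as required.

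The only genuinely substantive step is the first one: reducing an \emph{abstract} continuous seminorm on $\Gamma(\fK)$ to one that factors through the restriction to a single compact submanifold $X_m$. This is exactly where the identification $\Gamma(\fK) \cong \prolim \Gamma(\fK_n)$ (via the Open Mapping Theorem, as quoted) is indispensable; without it one would have no control linking continuity to locality. Once the representation is pinned to a single $X_m$, the remaining algebra — well-definedness, the homomorphism property, unitarity, and continuity of $\oline\pi$ — is routine and rests only on the surjectivity of $r_m$ and the ideal property of $\ker r_m$.
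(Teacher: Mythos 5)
Your proposal is correct and follows essentially the same route as the paper: use the identification $\Gamma(\fK) \cong \prolim \Gamma(\fK_n)$ to dominate $\|\pi(\cdot)\|$ by a continuous seminorm factoring through a single restriction map $r_m$, conclude $\ker r_m \subeq \ker \pi$, and then descend along the surjective quotient map $r_m$ to get $\oline\pi$ on $\Gamma(\fK_m)$ with $Y = X_m$. The paper compresses this into three lines; your write-up merely fills in the routine details (the rescaling to a global seminorm bound, the directedness argument pulling finitely many indices back to one, and the verification that $\oline\pi$ is a bounded unitary representation).
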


\begin{prf} Since $\Gamma(\fK)$ is the projective limit of the 
Fr\'echet spaces $\Gamma(\fK_n)$, there exists an $n \in \N$ 
and a continuous seminorm $p$ on $\Gamma(\fK_n)$ such that 
\[ \|\pi(s)\| \leq p(s\res_{X_n}) \quad \mbox{ for } \quad 
s \in \Gamma(\fK).\] 
This implies that $\pi$ vanishes on the kernel of the restriction 
map $r_n$, and since $r_n$ is a quotient map, the assertion follows 
with $Y = X_n$. 
\end{prf}

\begin{rem} \mlabel{rem:4.2} 
Suppose that $\fk$ is compact. 
Let $\fk = \fz(\fk) \oplus [\fk,\fk]$ denote the decomposition of 
$\fk$ into  center and the 
semisimple commutator algebra. Since this decomposition 
is invariant under the full automorphism group $\Aut(\fk)$, it 
induces a direct sum decomposition 
$\fK \cong Z(\fK) \oplus [\fK,\fK]$ of Lie algebra bundles, which in turn leads to 
\[  \Gamma(\fK) \cong \Gamma(Z(\fK)) \oplus \Gamma([\fK,\fK]).\] 

If $(\pi, \cH)$ is a bounded factor representation of 
$\Gamma(\fK)$, then 
$\pi(\Gamma(Z(\fK))) \subeq Z(\pi(\Gamma(\fK))'') = \C \1$. 
Therefore the representation $\pi$ is a tensor product 
of a one-dimensional unitary representation 
of the abelian Lie algebra $\Gamma(Z(\fK))$ and a 
factor representation of $\Gamma([\fK,\fK])$. 
Since every continuous linear map 
$\lambda \: \Gamma(Z(\fK)) \to i \R$ defines a one-dimensional 
unitary representation, the classification of 
 bounded factor representations of $\Gamma(\fK)$ reduces to the 
corresponding problem for $\Gamma([\fK,\fK])$.   
\end{rem}

\begin{ex} (a) Typically, Lie algebra bundles 
arise as $\fK := \Ad(P)$ for a smooth $K$-principal bundle $q \: P \to X$, where 
$K$ is a  Lie group with Lie algebra $\fk$. 
The adjoint bundle $\Ad(P) := P \times_{\Ad}\fk \rightarrow X$ 
is the orbit space of the action $K \curvearrowright P \times \fk$ 
defined by
$k\cdot(p,x) := (pk^{-1}, \Ad(k)x)$. 

  
The group of vertical bundle automorphisms of $P$ is called 
the \emph{gauge group} $\Gau(P)$.
It is a locally convex Lie group if $M$ is compact.
Each gauge transformation 
$g \in \Gau(P)$ is of the form $g(p) = p\tilde g(p)$
with $\tilde{g} \in C^\infty(P,K)^K$, 
i.e.\ $\tilde{g}(pk) = k^{-1}\tilde{g}(p)k$ for all
$p \in P, k \in K$.
The map $g \mapsto \tilde{g}$ is
an isomorphism of locally exponential Lie groups 
$ \Gau(P) \to C^\infty(P,K)^K$. 
Accordingly, we obtain an isomorphism of Lie algebras
\[ \gau(P) = \Gamma(\Ad(P)) \to 
C^\infty(P,\fk)^K := \{ f \in C^\infty(P,\fk)\: (\forall p \in P, k \in K)\, 
f(pk) = \Ad(k)^{-1}f(p)\}.\]  

(b) If $\rho \: K \to \U(V)$ is a continuous finite-dimensional unitary 
representation of $K$, then we obtain an associated 
vector bundle $\bV := P \times_\rho V$
as the orbit space of the action $K \curvearrowright P \times V$ 
defined by $k\cdot(p,v) := (pk^{-1}, \rho(k)v)$. 
As in the case of the adjoint bundle, one identifies
sections $s\in \Gamma(\bV)$ with equivariant functions
$\tilde s \in C^{\infty}(P,V)^K$.

The gauge group $\Gau(P)$ acts on $\bV$ by bundle automorphisms via 
\[ g\cdot[p,v] := [g(p),v] = [p\tilde g(p), v] 
= [p, \rho(\tilde g(p))v].\] 

For any Radon measure 
$\mu$ on $X$, we obtain on the space $\Gamma(\bV)$ of smooth 
sections of $\bV$ a scalar product by 
\[ \la s, t \ra := \int_X \la s(x), t(x)\ra \, d\mu(x).\] 
On the Hilbert completion $\Gamma^2(\bV,\mu)$ of $\Gamma(\bV)$,
this yields
a unitary 
representation of $\Gau(P)$ by 
$ (g\cdot s)(x) := g\cdot s(x)$.
If $s$ is identified with $\tilde s \in C^{\infty}(P,V)^{K}$,
this reads
$
(g.\tilde s)(p) = \rho(\tilde g(p)) \tilde s(p)
$. 
If $X$ is compact, then this representation 
$\Gau(P) \to \U(\Gamma^2(\bV,\mu))$ is norm continuous, and the corresponding 
derived representation 
\[ (\xi.\tilde s)(p) = \dd\rho(\xi(p)) \tilde s(p) \] 
is a bounded unitary representation of $\gau(P)$. 

Since the commutant of this representation always contains the multiplications 
with elements of $L^\infty(X,\mu)$, it is irreducible if and only if 
$V$ is irreducible, and
$\mu$ is non-zero and supported in a single point $x_0 \in X$. 
Fix $p_0 \in P$ with $q(p_0) = x_0$. Then 
$\Gamma^2(\bV,\mu) \to V, s \mapsto \tilde s(p_0)$ 
is a unitary equivalence intertwining the representation of 
$\Gau(P)$ on $\Gamma^2(\bV,\mu)$ with the evaluation representation on $V$ by 
$\pi_{p_0}(g) := \rho(\tilde g(p_0))$.
\end{ex}

\subsection{Local structure of irreducible bounded representations}
We proceed with the classification of irreducible bounded
unitary representations in terms of \emph{evaluation representations}.
We aim to prove that they are tensor products of evaluation representations. 
In order to do this, we investigate the 
local structure of bounded unitary representations.

\begin{defn} Let $x \in X$ 
and let $(\rho, V)$ be a bounded representation 
of $\fK_x \cong \fk$. Then $\pi(s) := \rho(s(x))$ defines a 
bounded unitary representation of $\Gamma(\fK)$. 
We call these representations {\it evaluation representations}. 
Note that $\pi$ is irreducible if and only if $\rho$ is.
\end{defn}

\begin{rem} \mlabel{rem:decomp} 
In view of the paracompactness of $X$, there exists a 
locally finite open covering $(U_j)_{j \in J}$ 
by  relatively compact subsets $U_j \subeq X$ for which 
$\fK$ is trivial on an open neighborhood of $\oline{U_j}$. 
Then the space $\Gamma_c(\fK\res_{U_j})$ 
of sections of $\fK$ with support contained in $U_j$ is isomorphic to 
$C_c^\infty(U_j,\fk)$ and a partition of unitary argument 
shows that 
\[ \Gamma_c(\fK) = \sum_{j \in J} \Gamma_c(\fK\res_{U_j}),\] 
where the $\Gamma_c(\fK\res_{U_j})$ are ideals isomorphic to $C^\infty_c(U_j,\fk)$.  
\end{rem}

\begin{lem}
  \mlabel{lem:c.20} If $\fk$ is perfect, i.e.\ $\fk = [\fk,\fk]$, then also 
$\Gamma_c(\fK)$ is perfect.
\end{lem}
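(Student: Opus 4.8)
The plan is to reduce the global statement to a purely local computation on trivializing charts, exploiting the decomposition provided by Remark~\ref{rem:decomp}. Since $X$ is paracompact, that remark gives a locally finite cover $(U_j)_{j\in J}$ by relatively compact sets over (neighbourhoods of) which $\fK$ is trivial, together with the identity
\[ \Gamma_c(\fK) = \sum_{j\in J}\Gamma_c(\fK\res_{U_j}), \]
where each summand is an ideal isomorphic as a Lie algebra to $C^\infty_c(U_j,\fk)$ with the pointwise bracket. Because $[\Gamma_c(\fK\res_{U_j}),\Gamma_c(\fK\res_{U_j})] \subeq [\Gamma_c(\fK),\Gamma_c(\fK)]$, it suffices to prove that each ideal $\Gamma_c(\fK\res_{U_j})$ is perfect; under the isomorphism this is exactly the assertion that $C^\infty_c(U,\fk)$ is perfect whenever the finite-dimensional fiber $\fk$ is perfect.

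For this local statement I would fix a basis $e_1,\dots,e_n$ of $\fk$. Perfectness $\fk=[\fk,\fk]$ means precisely that the brackets $[e_p,e_q]$ span $\fk$, so choosing a linear right inverse to the surjective linear map $(c_{pq})\mapsto\sum_{p,q}c_{pq}[e_p,e_q]$ yields linear functionals $\sigma_{pq}\:\fk\to\R$ with $\xi = \sum_{p,q}\sigma_{pq}(\xi)\,[e_p,e_q]$ for all $\xi\in\fk$. Given $f\in C^\infty_c(U,\fk)$, the scalar functions $g_{pq} := \sigma_{pq}\circ f$ lie in $C^\infty_c(U)$ with support contained in $\supp(f)$, and $f(x)=\sum_{p,q}g_{pq}(x)[e_p,e_q]$ holds pointwise. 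Choosing $\chi\in C^\infty_c(U)$ with $\chi\equiv 1$ on $\supp(f)$, I then set $a_{pq}:=g_{pq}e_p$ and $b_{pq}:=\chi e_q$, both in $C^\infty_c(U,\fk)$, and compute
\[ \sum_{p,q}[a_{pq},b_{pq}](x) = \chi(x)\sum_{p,q}g_{pq}(x)[e_p,e_q] = \chi(x)f(x) = f(x), \]
the last equality holding because $\chi\equiv 1$ wherever $f\neq 0$. Thus $f$ is a finite sum of brackets, which proves $C^\infty_c(U,\fk)=[C^\infty_c(U,\fk),C^\infty_c(U,\fk)]$.

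The only genuine subtlety is why the passage to trivializing charts is necessary at all. The linear maps $\xi\mapsto g_{pq}e_p$ used above are defined relative to a choice of frame of the typical fiber and are not bundle endomorphisms of $\fK$ (they need not intertwine the transition functions), so there is no way to run the bracket-decomposition argument directly on a section of a possibly nontrivial bundle; one really has to descend to the local pieces where a frame is available. The two remaining bookkeeping points are routine: only finitely many $U_j$ meet the compact set $\supp(s)$, so the sum in Remark~\ref{rem:decomp} is finite for each fixed section, and the bump function $\chi$ repairs the failure of the constant section $e_q$ to be compactly supported on the (possibly non-compact) set $U$.
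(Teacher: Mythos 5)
Your proof is correct and follows essentially the same route as the paper: both reduce via Remark~\ref{rem:decomp} to the trivialized ideals $C^\infty_c(U,\fk)$, and both use the perfectness of $\fk$ together with a bump function $\chi$ equal to $1$ on the relevant support to exhibit a section as a finite sum of brackets. The only cosmetic difference is that the paper works with elementary tensors $x\otimes f$ (which span), while you decompose a general section at once using the functionals $\sigma_{pq}$; the substance is identical.
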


\begin{prf} Since $\Gamma_c(\fK)$ is a sum of subalgebras of the form 
$C^\infty_c(X,\fk)$ (Remark~\ref{rem:decomp}), 
it suffices to show that $C_c^\infty(X,\fk) \simeq C_c^\infty(X,\R)\otimes_{\R} \fk$ 
is perfect for 
every smooth manifold $X$. 
Since $\fk$ is perfect, every $x \in \fk$ can be written as 
$x = \sum_{j = 1}^k [y_j, z_j]$ with $y_j, z_j \in \fk$. 
For $f \in C_c^\infty(X,\R)$, we choose a function 
$\chi \in C_c^\infty(X,\R)$ with $\chi\res_{\supp(f)} = 1$. Then 
\[ \sum_{j = 1}^k [y_j \otimes \chi, z_j \otimes f] 
=  \sum_{j = 1}^k [y_j,z_j] \otimes \chi f  = x \otimes f\] 
shows that $C_c^\infty(X,\fk)$ is perfect.
\end{prf}
\noindent Note that the above lemma applies in particular to compact semisimple Lie algebras $\fk$,
which are automatically perfect.

\begin{lem} \mlabel{lem:c.21} 
Let $(\rho, \cH)$ be a finite tensor product of 
irreducible evaluation representations
at different points for 
an ideal $\Gamma_c(\fK\res_{U})$ ($U \subeq X$ open) of 
$\Gamma_c(\fK)$. 
Then $(\rho, \cH)$
extends uniquely to a bounded unitary representation 
$(\oline\rho, \cH)$ of $\Gamma_c(\fK)$ 
on the same space. It is again a finite tensor product 
of irreducible evaluation representations
at different points
\end{lem}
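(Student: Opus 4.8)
The plan is to write down the only reasonable candidate for $\oline\rho$ and then prove it is forced. Writing $\rho = \bigotimes_{i=1}^n \pi_{x_i,\sigma_i}$ with pairwise distinct $x_1,\dots,x_n\in U$ and irreducible $*$-representations $\sigma_i$ of $\fK_{x_i}\cong\fk$ on $V_i$, so that $\cH = V_1\otimes\cdots\otimes V_n$ and $\rho(s)=\sum_{i=1}^n\1\otimes\cdots\otimes\sigma_i(s(x_i))\otimes\cdots\otimes\1$ for $s$ supported in $U$, I observe that each $x_i$ already lies in $X$, so the very same formula defines an operator $\oline\rho(s)$ for every $s\in\Gamma_c(\fK)$. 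Since each evaluation $s\mapsto s(x_i)$ is a continuous homomorphism $\Gamma_c(\fK)\to\fK_{x_i}$ and each $\sigma_i$ is finite-dimensional, $\oline\rho$ is a bounded unitary representation of $\Gamma_c(\fK)$; it manifestly restricts to $\rho$ and is a finite tensor product of irreducible evaluation representations at the distinct points $x_1,\dots,x_n$. Thus existence, and the final sentence of the statement, will be immediate, and the whole content lies in uniqueness.

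The preliminary step I would establish is that $\rho$ is irreducible. Using the local triviality of $\fK$ and bump functions with pairwise disjoint supports inside $U$ around the $x_i$, the evaluation map $\Gamma_c(\fK\res_U)\to\bigoplus_{i=1}^n\fK_{x_i}\cong\fk^n$, $s\mapsto(s(x_1),\dots,s(x_n))$, is surjective. Hence $\rho$ factors through it as the outer tensor product $\bigotimes_{i=1}^n\sigma_i$, where $\sigma_i$ acts on the $i$-th summand $\fK_{x_i}\cong\fk$; an outer tensor product of finite-dimensional irreducible representations of a direct sum of Lie algebras is again irreducible, so Schur's Lemma yields $\rho(\Gamma_c(\fK\res_U))'=\C\1$.

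With this in hand the uniqueness is essentially formal. Given any bounded unitary extension $(\tilde\rho,\cH)$ of $\rho$, I would exploit that $\Gamma_c(\fK\res_U)$ is an ideal: for $s\in\Gamma_c(\fK)$ and $t\in\Gamma_c(\fK\res_U)$ the bracket $[s,t]$ lies in the ideal, so $[\tilde\rho(s),\rho(t)]=\tilde\rho([s,t])=\rho([s,t])$, and likewise $[\oline\rho(s),\rho(t)]=\rho([s,t])$. Subtracting, $\tilde\rho(s)-\oline\rho(s)$ commutes with $\rho(\Gamma_c(\fK\res_U))$ and therefore equals $c(s)\1$ for a scalar $c(s)\in\C$. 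The map $c\:\Gamma_c(\fK)\to\C$ is linear, and since the two scalar terms drop out of any commutator one computes $\tilde\rho([s_1,s_2])=[\tilde\rho(s_1),\tilde\rho(s_2)]=[\oline\rho(s_1),\oline\rho(s_2)]=\oline\rho([s_1,s_2])$, whence $c$ vanishes on all brackets.

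Finally, because $\fk$ is compact semisimple and hence perfect, Lemma~\ref{lem:c.20} shows that $\Gamma_c(\fK)$ is perfect, so every element is a sum of brackets and $c\equiv 0$; thus $\tilde\rho=\oline\rho$. I expect the one genuinely non-formal point to be the irreducibility/commutant step of the second paragraph: once $\rho(\Gamma_c(\fK\res_U))'=\C\1$ is known, everything downstream follows from the ideal identity and perfectness, but establishing scalar commutant relies on the surjectivity of the evaluation onto $\fk^n$, which is where the local geometry of $\fK$ and the choice of cut-off functions genuinely enter.
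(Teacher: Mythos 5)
Your proof is correct and follows essentially the same route as the paper's: existence via the obvious extensions of the evaluation representations, and uniqueness by showing that the difference of two extensions commutes with $\rho(\Gamma_c(\fK\res_U))$, hence is scalar by Schur's Lemma and irreducibility, and hence defines a one-dimensional representation vanishing on brackets, which is zero by perfectness of $\Gamma_c(\fK)$ (Lemma~\ref{lem:c.20}). The only difference is that you prove the irreducibility of the tensor product explicitly, via surjectivity of the evaluation map onto $\fk^n$, whereas the paper merely asserts this fact.
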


\begin{prf} Since $\rho$ is a finite tensor product of evaluation
 representations, the existence of the extension 
follows from the obvious extensions of evaluation representations. 

To see that the extension is unique, 
note that a finite tensor product of 
irreducible evaluation representations
is itself irreducible, provided that one evaluates at 
different points $x$ of $X$.
Now suppose that 
$\tilde\rho$ and $\oline\rho$ are two extensions of $\rho$. 
Then, for each $x \in \Gamma_c(\fK)$, the operator 
$\tilde\rho(x) - \oline\rho(x)$ commutes with 
$\rho(\Gamma_c(\fK\res_{U_j}))$, so that Schur's Lemma implies that it is of the form 
$\alpha(x) \1$ for some $\alpha(x) \in i \R$. 
Then $\alpha \: \Gamma_c(\fK) \to \R$ is a one-dimensional representation, 
hence vanishes on all brackets. As $\Gamma_c(\fK)$ is perfect by Lemma~\ref{lem:c.20}, 
$\alpha =0$, and therefore $\tilde\rho = \oline\rho$. 
\end{prf}

\begin{lem} \mlabel{lem:facrep} 
Let $\fg$ be a Lie algebra and $\fn \trile \g$ be an ideal. 
Suppose that $\pi \: \g \to \fu(\cH)$ is a unitary factor 
representation. Then
\begin{description}
\item[\rm(i)] $\pi\res_\fn$ is a factor representation. 
\item[\rm(ii)] If $\pi\res_\fn$ is a type I representation, i.e., 
a multiple of an irreducible representation $(\rho, V)$, and if
$\rho$ extends to a bounded irreducible representation 
$(\pi_2, \cH)$ of $\g$, then 
there exists a bounded representation 
$(\pi_1, \cH_1)$ of $\g$ 
such that $\pi \cong \pi_1 \otimes \pi_2$ and $\fn \subeq \ker \pi_1$. 
Then $\pi_1$ is a factor representation which is 
irreducible if and only if $\pi$ is irreducible. 
\end{description}
\end{lem}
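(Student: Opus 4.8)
The plan is to phrase everything in terms of von Neumann algebras: write $M := \pi(\g)''$ and recall that $\pi$ being a factor representation means precisely that the center $\cZ(M) = \pi(\g)' \cap \pi(\g)''$ equals $\C\1$. For (i) I want to show that the center $\cZ_\fn := \pi(\fn)' \cap \pi(\fn)''$ of $N := \pi(\fn)''$ is trivial. One inclusion is free, since $\cZ_\fn \subseteq \pi(\fn)'' \subseteq \pi(\g)''$; so it suffices to prove the complementary inclusion $\cZ_\fn \subseteq \pi(\g)'$, for then $\cZ_\fn \subseteq \cZ(M) = \C\1$ by the factor hypothesis.

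To obtain $\cZ_\fn \subseteq \pi(\g)'$, I would fix $x \in \g$ and consider the one-parameter unitary group $U(t) := e^{t\pi(x)}$, which is norm-continuous because $\pi(x)$ is bounded. Since $\fn$ is an ideal, $(\ad x)^n y \in \fn$ for $y \in \fn$, whence $\Ad(U(t))\pi(y) = e^{t\,\ad\pi(x)}\pi(y) = \sum_{n \ge 0} \frac{t^n}{n!}\pi((\ad x)^n y)$ is a norm-convergent series of elements of $\pi(\fn)$. Therefore $\Ad(U(t))$ maps $\pi(\fn)$ into $N$, and being a normal $*$-automorphism of $B(\cH)$ it maps $N$ onto $N$ and hence its center $\cZ_\fn$ onto itself. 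Now for $Z \in \cZ_\fn$ the path $g(t) := \Ad(U(t))Z$ lies in $\cZ_\fn$ and is norm-differentiable, so its derivative $g'(0) = [\pi(x), Z]$ again lies in the norm-closed space $\cZ_\fn$. Thus $D_x := [\pi(x), \,\cdot\,]$ is a bounded derivation of the abelian von Neumann algebra $\cZ_\fn$. Since every bounded derivation of a commutative von Neumann algebra vanishes (by Sakai's theorem it is inner, hence zero in the abelian case), we get $[\pi(x), Z] = 0$ for all $x$ and $Z$, which is exactly $\cZ_\fn \subseteq \pi(\g)'$. This derivation argument is the crux of the lemma and the step I expect to be the main obstacle.

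For (ii) I would use (i) together with the type I hypothesis to write $\pi\res_\fn$ as a homogeneous multiple of $\rho$: there is a unitary identification $\cH \cong V \otimes \cH_1$ with $\pi(y) = \rho(y) \otimes \1$ for $y \in \fn$, where $\cH_1$ is the multiplicity space, and then $\pi(\fn)' = \C\1_V \otimes B(\cH_1) = \1 \otimes B(\cH_1)$. Let $\pi_2 \: \g \to \fu(V)$ be the given irreducible extension of $\rho$ and set $\beta(x) := \pi(x) - \pi_2(x) \otimes \1$. For $y \in \fn$ one has $\beta(y) = 0$, and a one-line bracket computation using $[x,y] \in \fn$ gives $[\beta(x), \pi(y)] = \pi([x,y]) - \pi([x,y]) = 0$, so $\beta(x) \in \pi(\fn)' = \1 \otimes B(\cH_1)$. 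Writing $\beta(x) = \1 \otimes \pi_1(x)$ defines a map $\pi_1 \: \g \to B(\cH_1)$ with $\pi(x) = \pi_2(x)\otimes\1 + \1\otimes\pi_1(x)$. Feeding this into $[\pi(x),\pi(x')] = \pi([x,x'])$ and comparing the $\1 \otimes B(\cH_1)$-components (the cross terms vanish) shows $[\pi_1(x),\pi_1(x')] = \pi_1([x,x'])$; skew-adjointness of $\pi$ and $\pi_2$ forces $\pi_1(x)^* = -\pi_1(x)$, and continuity is inherited from $\pi$ and $\pi_2$. Thus $\pi_1$ is a bounded unitary representation with $\fn \subseteq \ker\pi_1$, and by construction $\pi \cong \pi_2 \otimes \pi_1 \cong \pi_1 \otimes \pi_2$.

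It remains to identify the commutant. Any $T \in \pi(\g)'$ commutes in particular with $\pi(\fn)$, hence $T \in \1 \otimes B(\cH_1)$, say $T = \1 \otimes S$; and $[T, \pi(x)] = \1 \otimes [S, \pi_1(x)]$, so $T \in \pi(\g)'$ if and only if $S \in \pi_1(\g)'$. Therefore $\pi(\g)' = \1 \otimes \pi_1(\g)'$. Since a von Neumann algebra and its commutant have the same center, $\cZ(\pi(\g)'') = \1 \otimes \cZ(\pi_1(\g)'')$, so $\pi$ is a factor representation if and only if $\pi_1$ is, proving that $\pi_1$ is a factor representation. Likewise $\pi(\g)' = \C\1$ if and only if $\pi_1(\g)' = \C\1$, i.e.\ $\pi$ is irreducible if and only if $\pi_1$ is. This completes the plan.
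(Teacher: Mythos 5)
Your proposal is correct and takes essentially the same route as the paper's proof: in (i) both arguments show that $Z(\pi(\fn)'')$ is annihilated by $\ad \pi(\g)$ and hence lies in $\pi(\g)' \cap \pi(\g)'' = \C\1$, resting on the theorem that bounded derivations of von Neumann algebras are inner (you apply it to the abelian center after establishing its invariance via the automorphisms $\Ad(e^{t\pi(x)})$, while the paper applies it to $\pi(\fn)''$ directly and notes that inner derivations kill the center). Part (ii) coincides with the paper's construction, defining $\pi_1$ through $\pi(x) - \pi_2(x)\otimes\1 \in \pi(\fn)'$ and carrying out the same commutant computation to get the factor and irreducibility statements.
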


\begin{prf} (i) 
Let $\cM := \pi(\fn)'' \subeq B(\cH)$ denote the 
bicommutant of $\pi(\fn)$. The fact that $\fn \trile \g$ is an ideal 
implies that $\cM$ is invariant under $\ad(\pi(\g))$, so that 
we obtain for each $x \in \g$ a derivation $\ad(\pi(x))$ of $\cM$. 
Since every derivation of a von Neumann algebra is inner 
(\cite[Thm.~XI.3.5]{Ta03}), $\ad(\pi(\g))$ annihilates the 
center of $\cM$, so that $Z(\cM) \subeq \pi(\g)'$. 
On the other hand $Z(\cM) \subeq \cM \subeq \pi(\g)''$, so that 
$Z(\cM) \subeq Z(\pi(\g)'') = \C \1$ since $\pi(\g)''$ is a factor. 
We conclude that $\cM$ is also a factor. 

(ii)  If $\cM$ is of type I, 
there exists an irreducible bounded unitary representation 
$(\rho, \cH_2)$ of $\fn$ and a Hilbert space $\cH_1$ (the multiplicity 
space) such that $\cH \cong {\cH_1} \hat\otimes \cH_2$ and 
$\pi\res_\fn = \1_{\cH_1} \otimes \rho$. 
Extending the irreducible representation $\rho$ to a representation 
$\pi_2$ of $\g$ on $\cH_2$, we obtain the representation 
$\tilde\pi_2 := \1 \otimes \pi_2$ of $\g$ on $\cH = {\cH_1} \hat\otimes \cH_2$ 
which coincides on $\fn$ with $\pi$. For each $x \in \g$, the operator 
\[ \tilde\pi_1(x) := \pi(x) - \tilde\pi_2(x) \] 
commutes with $\pi(\fn)$ with generates the von Neumann algebra~$\cM 
\cong \1 \otimes B(\cH_2)$. Therefore 
\[ \tilde\pi_1(x) = \pi_1(x) \otimes \1 \in B({\cH_1}) \otimes \1_{\cH_2} 
\quad \mbox{ for some } \pi_1(x) \in B(\cH_1),\] 
which implies in particular that 
$\tilde\pi_1(x)$ commutes with $\tilde\pi_2(\g)$. This leads to 
\[ \pi(x) = \pi_1(x) \otimes \1 + \1 \otimes \pi_2(x),\] 
i.e., $\pi \cong \pi_1 \otimes \pi_2$. Now 
\[ \pi(\g)' \subeq \pi(\fn)' = (\1 \otimes \cM)' = B(\cH_1) \otimes \1 \] 
leads to 
\[ \pi(\g)' = \pi(\g)'\cap (B(\cH_1) \otimes \1) 
= \pi_1(\g)' \otimes \1 \cong \pi_1(\g)',\] 
and further to 
\[ \pi(\g)'' = \pi_1(\g)'' \otimes B(\cH_2).\] 
We conclude that 
\[ \C \1 = Z(\pi(\g)'') = Z(\pi_1(\g)'') \otimes \1,\] 
so that $\pi_1$ is a factor representation. 
We also see with Schur's Lemma that $\pi$ is irreducible if and only if 
$\pi_1$ is irreducible. 
\end{prf}

\begin{lem} \mlabel{prop:c.23b} Suppose that $\fk$ is compact semisimple. 
Let $(\pi, \cH)$ be a bounded unitary factor (irreducible) representation 
of $\Gamma_c(\fK)$ and $U \subeq X$ an open relatively compact 
subset for which the bundle $\fK$ is trivial on an open neighborhood $V$ 
of $\oline U$. Then the following assertions hold: 
\begin{description}
\item[\rm(i)] The restriction of $\pi$ to the ideal 
$\fn := \Gamma_c(\fK\res_U) \cong C^\infty_c(U,\fk)$ extends to a representation  
of the topological Lie algebra $C^\infty_c(U,\fk) \rtimes \fk 
\cong \fk \otimes_\R C^\infty_c(U)_+$. 
\item[\rm(ii)] There exists a 
bounded factor (irreducible) representation 
$(\pi_1, \cH_1)$ of $\Gamma_c(\fK)$ and a finite tensor product  
$(\pi_2, \cH_2)$ of irreducible evaluation representations
at different points
such that $\pi \cong \pi_1 \otimes \pi_2$ and 
$\fn \subeq \ker \pi_1$. 
\end{description}
\end{lem}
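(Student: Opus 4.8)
The plan is to establish (i) first, by realizing the constant sections $\fk$ as inner derivations of the factor $\cM := \pi(\fn)''$, and then to deduce (ii) by feeding the extension into the highest-weight analysis of Proposition~\ref{prop:6.1} together with the factorization Lemma~\ref{lem:facrep}. First I would record that, since $\fn = \Gamma_c(\fK\res_U)$ is an ideal of $\Gamma_c(\fK)$, Lemma~\ref{lem:facrep}(i) shows $\pi\res_\fn$ to be a factor representation, so $\cM$ is a factor. Using the trivialization of $\fK$ over $V$, fix $\chi\in C^\infty_c(V,\R)$ with $\chi\equiv 1$ on a neighborhood of $\oline U$ and set $\widehat\xi := \chi\,\xi\in\Gamma_c(\fK)$ for $\xi\in\fk$. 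As $\chi\equiv 1$ on $\supp(s)$ for $s\in\fn$, one has $[\widehat\xi,s] = [\xi,s(\cdot)]$, which is exactly the pointwise adjoint action defining $C^\infty_c(U,\fk)\rtimes\fk$. Hence $D_\xi := \ad(\pi(\widehat\xi))\res_\cM$ is (by weak-$*$ continuity of $\ad$) a $*$-derivation of $\cM$, the operator $\pi(\widehat\xi)$ being skew-hermitian; a short computation shows $D_\xi$ is independent of $\chi$ (two cut-offs differ by a section vanishing near $\oline U$, hence by an element commuting with $\pi(\fn)$) and that $\xi\mapsto D_\xi$ is a Lie algebra homomorphism $\fk\to\Der(\cM)$.

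The technical heart of (i), and the step I expect to be the main obstacle, is to lift $\xi\mapsto D_\xi$ to a genuine skew-hermitian representation $\xi\mapsto m_\xi\in\cM$ with $\ad(m_\xi)=D_\xi$. Since every derivation of a von Neumann algebra is inner (\cite[Thm.~XI.3.5]{Ta03}), each $D_\xi$ is implemented by some $\tilde m_\xi\in\cM$, unique up to $Z(\cM)=\C\1$ because $\cM$ is a factor; averaging against the adjoint lets one take $\tilde m_\xi$ skew-hermitian and $\xi\mapsto\tilde m_\xi$ linear. The defect $[\tilde m_\xi,\tilde m_\eta]-\tilde m_{[\xi,\eta]}$ then lies in $\C\1$ and defines a scalar $2$-cocycle on $\fk$; as $\fk$ is semisimple, Whitehead's second lemma gives $H^2(\fk,\C)=0$, so this cocycle is a coboundary, and the correction $m_\xi := \tilde m_\xi-\gamma(\xi)\1$ (with $\gamma$ chosen $i\R$-valued to preserve skew-hermiticity) makes $\xi\mapsto m_\xi$ a homomorphism. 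Setting $\bar\pi(s):=\pi(s)$ for $s\in\fn$ and $\bar\pi(\xi):=m_\xi$ for $\xi\in\fk$ then yields, from the three bracket relations, a bounded unitary representation of $C^\infty_c(U,\fk)\rtimes\fk\cong\fk\otimes_\R C^\infty_c(U)_+$ extending $\pi\res_\fn$, proving (i).

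For (ii) the decisive observation is that the generators $m_\xi$ of $\bar\pi$ lie in $\cM$, so $\bar\pi(\fk\otimes_\R C^\infty_c(U)_+)''=\cM$ and $\bar\pi$ is again a factor representation with commutant $\cM'$. Complexifying $\bar\pi$ to a bounded $*$-representation of $\g(\cA)=\fk_\C\otimes\cA$ with $\cA := C^\infty_c(U,\C)_+$, Proposition~\ref{prop:6.1}(iv) identifies $\cM'\cong\bar\pi(\g(\cA))'$ with $\rho(\g^0)'$, where $\rho$ is the representation of the abelian $\g^0=\fh\otimes\cA$ on $\cE=\cH^{\fg^-}$. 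Since $\cM'$ is a factor and $\rho(\g^0)''\subseteq Z(\rho(\g^0)')=\C\1$, the algebra $\rho(\g^0)$ is scalar, so $\cE$ carries a single $\g^0$-weight $\lambda$, $\rho(\g^0)'=B(\cE)$ is a type~I factor, and $\bar\pi$ — hence $\pi\res_\fn$ — is a type~I factor representation, a multiple of the irreducible $\pi_\lambda$. By Theorem~\ref{thm:tenspro}, $\pi_\lambda$ is a finite tensor product of evaluation representations at distinct characters of $\cA$; restricting to $\fn$ discards the character vanishing on $C^\infty_c(U)$ and leaves an irreducible finite tensor product $\rho_0$ of evaluation representations at distinct points of $U$.

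Finally, Lemma~\ref{lem:c.21} extends $\rho_0$ uniquely to a bounded representation $(\pi_2,\cH_2)$ of $\Gamma_c(\fK)$ that is again a finite tensor product of irreducible evaluation representations at distinct points. The hypotheses of Lemma~\ref{lem:facrep}(ii) for the ideal $\fn\trianglelefteq\Gamma_c(\fK)$ are now met — $\pi\res_\fn$ is a type~I factor representation, a multiple of $\rho_0$, and $\rho_0$ extends to the bounded irreducible representation $\pi_2$ — so there is a bounded factor representation $(\pi_1,\cH_1)$ of $\Gamma_c(\fK)$ with $\fn\subseteq\ker\pi_1$ and $\pi\cong\pi_1\otimes\pi_2$, with $\pi_1$ irreducible exactly when $\pi$ is. This establishes (ii).
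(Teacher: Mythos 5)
Your proof is correct. Part (i) is essentially the paper's own argument: the same cut-off sections $\chi\xi$, the same homomorphism $\fk \to \der_*(\cM)$, innerness of derivations of the factor $\cM = \pi(\fn)''$, and vanishing of the resulting scalar $2$-cocycle by semisimplicity of $\fk$ --- the paper packages your Whitehead-lemma computation as the pullback of the central extension $\fu(Z(\cM)) \to \fu(\cM) \to \der_*(\cM)$ along the homomorphism $\fk \to \der_*(\cM)$.

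Part (ii) reaches the same endgame (Theorem~\ref{thm:tenspro}, then Lemma~\ref{lem:c.21} and Lemma~\ref{lem:facrep}(ii)), but by a genuinely different route to the key type-I statement. The paper forms the $C^*$-algebra $C^*(\oline\pi(\fn_+))$, observes that all of its irreducible representations are finite-dimensional by Corollary~\ref{cor:tenspro}, hence contain the compact operators, and invokes Sakai's theorem (\cite{Sa67}) to conclude that this algebra, and therefore $\cM$, is of type~I. You instead complexify $\oline\pi$ and apply Proposition~\ref{prop:6.1}(iv): since $\cM' \cong \rho(\g^0)'$ is a factor and $\rho(\g^0)$ is abelian, $\rho(\g^0)$ must act by scalars on $\cE$, so $\cM' \cong B(\cE)$ is a type~I factor. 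Your version is more self-contained, staying entirely inside the highest-weight machinery of Section~\ref{sec:2} and avoiding the $C^*$-algebraic input, whereas the paper's detour through $C^*(\oline\pi(\fn_+))$ anticipates the UHF-algebra formalism used in Theorem~\ref{thm:4.12}. One minor imprecision: you ``discard'' a possible tensor factor of $\pi_\lambda$ at the augmentation character of $C^\infty_c(U,\C)_+$ (the character killing $C^\infty_c(U)$); in fact such a factor cannot occur nontrivially, because $\pi(\fn)'' = \oline\pi(\fn_+)'' = \cM$ forces $\pi_\lambda\res_\fn$ to be irreducible, which is how the paper rules it out. But your weaker statement --- that $\pi\res_\fn$ is a multiple of the irreducible $\rho_0$ --- is all that Lemma~\ref{lem:facrep}(ii) requires, so your conclusion stands.
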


\begin{prf} Let $\cM := \pi(\fn)'' \subeq B(\cH)$ denote the 
bicommutant of $\pi(\fn)$. In view of Lemma~\ref{lem:facrep}, 
this is a factor. If $\pi(\fn) = \{0\}$, there is nothing to show. 
We may therefore assume that $\fn \not\subeq \ker \pi$. 
Since $\fn \trile \fg := \Gamma_c(\fK)$ is an ideal, 
we obtain a homomorphism 
\[ \alpha \: \g \to \der_*(\cM), \quad 
\alpha(s)(A) := [\pi(s),A].\] 
Since every derivation of a von Neumann algebra is inner 
(\cite[Thm.~XI.3.5]{Ta03}), 
$\der(\cM) \cong \cM/Z(\cM)$ as Banach--Lie algebras. 
This in turn implies that the $*$-derivations of $\cM$ are induced 
by elements in $\fu(\cM) := \{ A \in \cM \: A^* = - A\}$, i.e., 
\[ \der_*(\cM) := \{ D \in \der(\cM) \: (\forall M\in \cM)\, 
D(M^*) = D(M)^*\} \cong \fu(\cM)/Z(\fu(\cM)).\]  

Let $h \in C^\infty_c(V,\R)$ be such that 
$h\res_U = 1$. For $x \in \fk$ and the corresponding section 
\[ x \otimes h \in \fk \otimes C_c^\infty(V,\R)  \cong 
C_c^\infty(V,\fk) \cong \Gamma_c(\fK\res_V) \subeq \Gamma_c(\fK) \] 
we then have 
\begin{equation}\mlabel{eq:comder}
\alpha(x \otimes h) \pi(s) = \pi([x, s]), \quad 
s \in C_c(U,\fk).
\end{equation}
In particular, $\alpha(x \otimes h)$ does not depend on the choice 
of $h$, which leads to a homomorphism 
\[ \oline\alpha \: \fk \to \der_*(\cM), \quad 
x \mapsto \alpha(x \otimes h).\] 
The pullback along $\oline\alpha$ of the central extension
\[
\fu(Z(\cM)) \rightarrow \fu(\cM) \rightarrow \der_*(\cM) 
\]
yields a $\fu(Z(\cM))$-valued 2-cocycle on $\fk$, which is trivial 
because $\fk$ is semisimple. We conclude that there exists a 
homomorphism
\[ \tilde\alpha \: \fk \to \fu(\cM) \quad \mbox{ with } \quad 
\alpha(x) = \ad(\tilde\alpha(x)) \quad \mbox{ for } \quad x \in \fk.\] 
Then $\tilde\alpha$ is a bounded unitary representation 
of $\fk$. Because of (\ref{eq:comder}), 
\[ \oline\pi \: \fn_+ := 
C_c(U,\fk) \rtimes \fk \to \fu(\cM) \subeq \fu(\cH), \quad 
(s, x) \mapsto \pi(s) + \tilde\alpha(x) \] 
defines a bounded unitary representation whose range lies in $\fu(\cM)$. 

Let $\cA := C^*(\oline\pi(\fn_+)) \subeq B(\cH)$ denote the 
$C^*$-algebra generated by $\oline\pi(\fn_+)$. 
Then every irreducible representation 
$(\alpha, \cF)$ of $\cA$ defines a bounded irreducible representation of $\fn_+$,
hence is finite-dimensional by Corollary~\ref{cor:tenspro}. 
In particular, the image of $\cA$ in 
every irreducible representation contains the compact operators, 
so that $\cA$ is type~I (\cite{Sa67}). We conclude that the factor 
$\cM = \cA''$ is of type I and that 
$\oline\pi$ is a factor representation of type~I, 
hence a multiple of some 
irreducible representation $(\oline\rho,V)$ whose restriction 
$(\rho,V)$ to $\fn$ is also irreducible. 
Now $\rho$ is a finite tensor product of irreducible evaluation representations 
in different points for
$\fk \otimes C^\infty_c(U,\R)_+$ (cf.\ Theorem~\ref{thm:tenspro}), 
and since the restriction to $\fn$ 
is  irreducible, none of the corresponding characters of 
$C^\infty_c(U,\R)_+$ vanishes on the ideal $C^\infty_c(U,\R)$. 
Hence they are given by evaluations in points of $U$ (Example~\ref{ex:cia}). 
In view of Lemma~\ref{lem:c.21}, $\rho$ extends uniquely to an irreducible 
bounded unitary representation $(\pi_2,\cH_2)$ of $\g$. 
Now Lemma~\ref{lem:facrep}(ii) applies and the assertion follows. 
\end{prf}

\begin{thm} \mlabel{thm:d.25} Suppose that $X$ is a smooth 
manifold with compact boundary and that 
$\fK \to X$ is a smooth Lie algebra bundle whose typical fiber 
$\fk$ is a compact semisimple Lie algebra. 
Let $C \subeq X$ be a compact subset 
and let $(\pi, \cH)$ be a bounded unitary factor (irreducible) representation 
of $\Gamma_c(\fK)$. Then there exists a 
bounded factor (irreducible) representation 
$(\pi_1, \cH_1)$ and a finite tensor product  
$(\pi_2, \cH_2)$ of irreducible evaluation representations in different points 
for $\Gamma_c(\fK)$ 
such that $\pi \cong \pi_1 \otimes \pi_2$ and 
$\Gamma(\fK)_C =  \{ s \in \Gamma(\fK) \: \supp(s) \subeq C\} \subeq \ker \pi_1$.
\end{thm}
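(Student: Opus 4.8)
The plan is to reduce the global statement to the local Lemma~\ref{prop:c.23b} by covering the compact set $C$ and peeling off evaluation representations one trivializing patch at a time. Since $\fK$ is locally trivial and $C$ is compact, I first choose finitely many relatively compact open sets $U_1, \ldots, U_m$ covering $C$ such that $\fK$ is trivial on an open neighborhood of each $\oline{U_j}$. A partition of unity subordinate to $\{U_j\}$ splits every section supported in $C$, giving
\[ \Gamma(\fK)_C \subeq \Gamma_c(\fK\res_{U_1 \cup \cdots \cup U_m}) = \sum_{j=1}^m \Gamma_c(\fK\res_{U_j}). \]
Thus it suffices to produce a decomposition $\pi \cong \pi_1 \otimes \pi_2$ in which $\pi_1$ annihilates each ideal $\Gamma_c(\fK\res_{U_j})$, with $\pi_2$ a finite tensor product of irreducible evaluation representations at distinct points.

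I would construct this by induction on $j$. Set $\pi_1^{(0)} := \pi$. Given the factor (irreducible) representation $\pi_1^{(j-1)}$, Lemma~\ref{prop:c.23b} applied to the trivializing patch $U_j$ yields $\pi_1^{(j-1)} \cong \pi_1^{(j)} \otimes \pi_2^{(j)}$, where $\pi_2^{(j)}$ is a finite tensor product of irreducible evaluation representations at distinct points of $U_j$, and $\pi_1^{(j)}$ is again a bounded factor (irreducible) representation of $\Gamma_c(\fK)$ with $\Gamma_c(\fK\res_{U_j}) \subeq \ker \pi_1^{(j)}$; moreover $\pi_1^{(j)}$ is irreducible precisely when $\pi_1^{(j-1)}$ is, hence precisely when $\pi$ is. Setting $\pi_1 := \pi_1^{(m)}$ and $\pi_2 := \pi_2^{(1)} \otimes \cdots \otimes \pi_2^{(m)}$ then gives the desired factorization, with factor/irreducibility of $\pi_1$ inherited at every step.

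The two points requiring care are (i) that $\pi_1^{(j)}$ still annihilates the previously treated ideals $\Gamma_c(\fK\res_{U_i})$, $i<j$, and (ii) that the evaluation points collected in the successive $\pi_2^{(j)}$ are pairwise distinct, so that $\pi_2$ really is a tensor product at different points (hence irreducible). Both follow from a \emph{propagation argument based on perfectness}: whenever $\rho(s) = \rho_1(s) \otimes \1 + \1 \otimes \rho_2(s)$ vanishes on an ideal $\fm$, the relation $\rho_1(s) \otimes \1 = -\1 \otimes \rho_2(s)$ forces $\rho_1(s) = c(s)\1$ and $\rho_2(s) = -c(s)\1$ for a linear functional $c$; since $\rho_1$ is a homomorphism, $c$ vanishes on $[\fm,\fm]$, and as $\fm$ is perfect (Lemma~\ref{lem:c.20}, applied to the section algebra over the relevant open set) both $\rho_1$ and $\rho_2$ vanish on $\fm$. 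Applying this with $\fm = \Gamma_c(\fK\res_{U_1 \cup \cdots \cup U_{j-1}})$, on which $\pi_1^{(j-1)}$ vanishes by induction, gives (i) and hence $\Gamma_c(\fK\res_{U_1 \cup \cdots \cup U_m}) \subeq \ker\pi_1$. Applying it with $\fm = \Gamma_c(\fK\res_{U_j \cap (U_1 \cup \cdots \cup U_{j-1})})$ shows $\pi_2^{(j)}$ vanishes on sections supported in $U_j \cap \bigcup_{i<j} U_i$; since an evaluation representation based at a point $p$ of that open set acts nontrivially on some such section (choosing $\sigma$ supported near $p$, avoiding the other evaluation points, with the nontrivial $\rho$ not killing $\sigma(p)$; trivial factors may be discarded), each point of $\pi_2^{(j)}$ lies in $U_j \setminus \bigcup_{i<j} U_i$, and distinctness across steps follows.

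The main obstacle is exactly this bookkeeping: the local lemma controls only one patch at a time, and one must guarantee that reapplying it neither destroys the vanishing already achieved nor reuses an evaluation point. The perfectness propagation lemma is what closes the induction, and it is where the compact semisimplicity of $\fk$ — hence perfectness of all the section algebras $\Gamma_c(\fK\res_W)$ — enters decisively.
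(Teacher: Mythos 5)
Your proof is correct and follows essentially the same route as the paper: cover the compact set $C$ by finitely many trivializing patches and iterate Lemma~\ref{prop:c.23b}, peeling off a finite tensor product of evaluation representations at each patch while the factor/irreducibility property passes to the remaining factor. Your perfectness-based propagation lemma (via Lemma~\ref{lem:c.20}) makes explicit the bookkeeping --- that the kernel accumulated so far survives each new factorization and that new evaluation points avoid previously treated patches --- which the paper handles more tersely through the minimality of the patch index and the location of the evaluation points; both arguments are sound.
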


\begin{prf} Let $(U_j)_{j \in J}$ be as in Remark~\ref{rem:decomp}. 
Since $C$ is compact and the covering $(U_j)$ is locally finite, 
the set 
$F := \{ j \in J \: U_j \cap C \not=\eset\}$ is finite 
and we may w.l.o.g.\ assume that $F = \{1,\ldots, N\}$. 
Then $\fn := \sum_{j \in F} \fg_j$ is an ideal of $\g := \Gamma_c(\fK)$, 
where $\g_j := \Gamma_c(\fK\res_{U_j}) \cong C_c^\infty(U_j,\fk)$. 
From $C \subeq \bigcup_{j \in F} U_j$ it follows that 
$\supp(s) \subeq C$ implies $s \in \fn$. 

If $\fn \subeq \ker \pi$, we put $\pi_1 := \pi$, and there is nothing 
to show. If this is not the case, 
there exists a minimal $j \in F$ for which $\g_j \not\subeq \ker \pi$. 
Then Lemma~\ref{prop:c.23b} leads to a tensor product 
decomposition 
$\pi = \rho_j \otimes \rho_j'$, where 
$\rho_j$ vanishes on $\sum_{i \leq j} \g_j$ and 
$\rho_j'\res_{\g_j}$ is a finite tensor product of irreducible evaluation 
representations in different points, hence in particular irreducible. 
If $\fn \not\subeq \ker \rho_j$, we apply the same argument to $\rho_j$, 
where $j' \in \{ i \in F \: i > j\}$ is now minimal with $\g_{j'} 
\not\subeq \ker \rho_j$. 
After at most $N$ steps, we arrive at a factorization 
$\pi = \pi_1 \otimes \pi_2,$ 
where $\pi_1$ vanishes on $\fn$ and 
$\pi_2$ is a finite tensor product of irreducible evaluation 
representations. 
Because $\rho_j$ vanishes on $\sum_{i \leq j} \g_j$, the new points
that one obtains at each step cannot coincide with points that one 
already had.
This completes the proof.  
\end{prf}

\subsection{Classification of irreducible bounded representations}

Using the preceding theorem on the representations of 
$\Gamma_c(\fK)$, we can now prove our main result on the 
Fr\'echet--Lie algebra $\Gamma(\fK)$ of all smooth sections. 
In particular, it shows that all irreducible bounded 
unitary representations are finite-dimensional.

\begin{thm} \mlabel{thm:d.24} Suppose that $X$ is a smooth 
manifold with compact boundary and that 
$\fK \to X$ is a smooth Lie algebra bundle whose typical fiber 
$\fk$ is a compact semisimple Lie algebra. 
Then every bounded irreducible unitary representation $\pi$
of $\Gamma(\fK)$ is equivalent to a finite tensor product of irreducible 
evaluation representations at different points.
That is, there exists a finite subset $\bx \subseteq X$
and irreducible representations $\rho_{x}$ of $\fK_{x}$
such that $\pi \simeq \pi_{\bx,\rho} := \bigotimes_{x\in \bx} \rho_{x} \circ \ev_x$. 
Two such representations $\pi_{\bx,\rho}$ and $\pi_{\bx',\rho'}$ are
equivalent if and only if $\bx = \bx'$ and $\rho_{x}\simeq \rho'_{x}$
for all $x\in \bx$.
\end{thm}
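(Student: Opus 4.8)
The plan is to deduce the statement from the factorization result for $\Gamma_c(\fK)$ proved in Theorem~\ref{thm:d.25}, after first reducing to a compact base. Given a bounded irreducible unitary representation $(\pi,\cH)$ of $\Gamma(\fK)$, I would begin by invoking Proposition~\ref{prop:6.1b}: there is a compact submanifold $Y \subeq X$ with boundary and a bounded representation $(\oline\pi,\cH)$ of $\Gamma(\fK\res_Y)$ with $\pi(s) = \oline\pi(s\res_Y)$ for all $s \in \Gamma(\fK)$. Since $r_Y \: \Gamma(\fK) \to \Gamma(\fK\res_Y)$ is surjective, $\oline\pi$ is again irreducible. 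Because $Y$ is compact, every section of $\fK\res_Y$ is compactly supported, so $\Gamma(\fK\res_Y) = \Gamma_c(\fK\res_Y)$ and Theorem~\ref{thm:d.25} becomes applicable; note that finiteness of the point set is automatic, as everything now takes place over the single compact piece $Y$.

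Next I would apply Theorem~\ref{thm:d.25} to $\oline\pi$ with the compact set $C := Y$. Since every section of $\fK\res_Y$ is supported in $Y$, the ideal $\Gamma(\fK\res_Y)_Y$ occurring in that theorem is \emph{all} of $\Gamma(\fK\res_Y)$. Hence the decomposition $\oline\pi \cong \pi_1 \otimes \pi_2$ it provides has $\pi_1$ vanishing on the entire Lie algebra, while $\pi_2$ is a finite tensor product of irreducible evaluation representations at pairwise distinct points of $Y$. As $\oline\pi$ is irreducible, $\pi_1$ is irreducible too (Theorem~\ref{thm:d.25}, cf.\ Lemma~\ref{lem:facrep}(ii)); but an irreducible representation whose image is $\{0\}$ has commutant $B(\cH_1) = \C\1$, hence is one-dimensional. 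Thus $\pi_1$ is the trivial one-dimensional representation and $\oline\pi \cong \pi_2$. Recording the points $\bx \subeq Y \subeq X$ and the fibre representations $\rho_x$ of $\fK_x \cong \fk$ and composing with restriction to $Y$ (so that $(s\res_Y)(x) = s(x)$ for $x \in \bx$) yields $\pi \simeq \bigotimes_{x\in\bx} \rho_x \circ \ev_x$, and in particular $\cH$ is finite-dimensional.

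For the uniqueness assertion the \emph{if} direction is immediate, since permuting tensor factors gives unitarily equivalent representations. For the converse I would argue locally. Fix $x \in X$ and a relatively compact trivializing open neighbourhood $U$ (as in Remark~\ref{rem:decomp}) meeting $\bx \cup \bx'$ at most in $x$, and restrict both representations to the ideal $\Gamma_c(\fK\res_U) \cong C^\infty_c(U,\fk)$. Every $s$ supported in $U$ vanishes at all distinguished points except possibly $x$, so from $\pi_{\bx,\rho}(s) = \sum_{y \in \bx} \1 \otimes \cdots \otimes \rho_y(s(y)) \otimes \cdots \otimes \1$ one reads off that the restriction is the zero representation if $x \notin \bx$, and equals $(\rho_x \circ \ev_x) \otimes \1$, a multiple of the irreducible representation $\rho_x \circ \ev_x$, if $x \in \bx$. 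Since $\pi_{\bx,\rho} \simeq \pi_{\bx',\rho'}$ forces these restrictions to be unitarily equivalent, nonvanishing detects membership in $\bx$ (resp.\ $\bx'$), giving $\bx = \bx'$; and for each common $x$ the unique irreducible subrepresentation of the restriction determines $\rho_x \circ \ev_x$ up to equivalence, whence $\rho_x \simeq \rho'_x$ by surjectivity of $\ev_x$ onto $\fk$.

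The entire substantive content is carried by Theorem~\ref{thm:d.25}, so I do not expect a genuine obstacle in this final step. The only points requiring care are the two reductions: verifying via Proposition~\ref{prop:6.1b} that $\pi$ genuinely factors through a \emph{single} compact $\Gamma(\fK\res_Y)$, and observing that the choice $C = Y$ collapses the factor $\pi_1$ to the trivial representation. The uniqueness part is then routine bookkeeping once the local restriction computation above is in place.
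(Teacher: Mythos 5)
Your proposal is correct and follows essentially the same route as the paper: reduction to a compact base via Proposition~\ref{prop:6.1b}, then Theorem~\ref{thm:d.25} applied with $C$ equal to the whole compact manifold, which forces the factor $\pi_1$ to vanish identically and hence (by irreducibility) to be the trivial one-dimensional representation. The only immaterial difference is the uniqueness step, where you restrict to the local ideals $\Gamma_c(\fK\res_U)$ around each point and compare isotypes, whereas the paper invokes surjectivity of the joint evaluation map $\ev_{\bx\cup\bx'}\colon \Gamma(\fK)\to \bigoplus_{x\in\bx\cup\bx'}\fK_x$ and finite-dimensional representation theory; both arguments are routine and equivalent.
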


\begin{prf} In view of 
Proposition~\ref{prop:6.1b}, we may w.l.o.g.\ assume that 
$X$ is compact. Then $\pi \simeq \pi_{\bx,\rho}$ follows from the 
preceding theorem with $C = X$. 
Since the evaluation map $\ev_{\bx\cup \bx'} : \Gamma(\fK) \rightarrow 
\bigoplus_{x\in \bx\cup \bx'}\fK_x$ is surjective, 
$\pi_{\bx,\rho} \simeq \pi_{\bx',\rho'}$ implies $\bx = \bx'$,
as well as $\rho_{x}\simeq \rho_{x}'$.
\end{prf}

We now consider the Lie algebra $\g = \Gamma_c(\fK)$ for a 
non-compact manifold $X$. 
For every 
bounded factor representation $(\pi, \cH)$ of 
$\g$ and every compact equidimensional submanifold $Y \subeq X$ with boundary, 
we have seen 
in Theorem~\ref{thm:d.25} that
there exists a factorization 
$\pi = \pi_1 \otimes \pi_2$ for which 
$\Gamma(\fK)_Y \subeq \ker \pi_1$, 
and $\pi_2$ is a finite tensor product of evaluation representations 
$\bigotimes_{x\in \bx} \rho_{x} \circ \ev_{x}$, where 
$(\rho_x, V_x)$ are irreducible representations of the Lie algebras 
$\fK_{x} \cong \fk$. Therefore 
\[ C^*(\pi(\Gamma(\fK)_Y)) \cong C^*(\pi_2(\Gamma(\fK)_Y)).\] 
We may then assume w.l.o.g.\ that $\bx \subseteq Y^0$, which 
further implies that 
$\pi_2(\Gamma(\fK)_Y) \cong \oplus_{x\in \bx} \rho_x (\fK_{x})$, and thus 
\[ C^*(\pi_2(\Gamma(\fK)_Y)) \cong \bigotimes_{x\in \bx} B(V_x).\] 

Since $X$ is $\sigma$-compact, 
$X = \bigcup_n Y_n$ with $Y_n \subeq Y_{n+1}^0$ and $Y_n$ is a compact submanifold 
with boundary, so we can iterate the preceding construction. Therefore 
$\Gamma_c(\fK) = \bigcup_n \Gamma(\fK)_{Y_n}$ implies the existence of a 
locally finite subset $\bx \subseteq X$ such that 
\[ C^*(\pi(\Gamma_c(\fK))) 
\cong \indlim C^*(\pi(\Gamma(\fK)_{Y_n})) 
\cong \indlim \bigotimes_{x\in \bx \cap Y_n^0} B(V_x)
=: \hat\bigotimes_{x\in \bx} B(V_x)
\,,\] 
where the second limit is 
the direct limit of the net of $C^*$-algebras $\bigotimes_{x\in \bx \cap Y_n^0}B(V_x)$ 
over the directed system of finite subsets $\bx \cap Y_n^0$ of $\bx$,
i.e.,
the norm completion of the algebraic limit.
Defining
\[
\cA_{\bx,\rho} := \hat\bigotimes_{x\in \bx} B(V_x)\,,
\]
we obtain for every $x$ a canonical inclusion $\iota_x \: B(V_x) \hookrightarrow \cA_{\bx,\rho}$.

Conversely, for every locally finite subset 
$\bx \subeq X$ with corresponding irreducible unitary representations $\rho_x$ 
of $\fK_{x}$, we obtain a 
Lie algebra homomorphism 
\begin{equation}
  \label{eq:iota}
\eta_{\bx,\rho} \: \Gamma_{c}(\fK) \to \cA_{\bx,\rho}, 
\quad s \mapsto 
\sum_{x\in \bx}
\iota_{x} \circ \rho_{x}(s(x))\,
\end{equation}
whose image generates a dense subalgebra. 
(The sum only has finitely many terms.)

The preceding discussion now leads to the following theorem which describes  
the bounded irreducible and factor representations of the 
LF-Lie algebra $\Gamma_c(\fK)$ in terms of irreducible representations of the 
$C^*$-algebras $\cA_{\bx,\rho}$. It reduces all Lie theoretic issues 
concerning these representation to questions concerning $C^*$-algebras. 

\begin{thm} \mlabel{thm:4.12} 
Suppose that $X$ is a smooth 
manifold with compact boundary and that 
$\fK \to X$ is a smooth Lie algebra bundle whose typical fiber 
$\fk$ is a compact semisimple Lie algebra. 
For every bounded unitary  factor (irreducible) representation 
$(\pi, \cH)$ of $\Gamma_c(\fK)$, there exists a locally finite subset 
$\bx \subeq X$, irreducible representations $\rho_{x}$ of $\fK_{x}$ corresponding to 
$x\in \bx$,
and a unique factor (irreducible) representation 
$\beta \: \cA_{\bx,\rho} \to B(\cH)$ with 
$\pi \simeq \beta \circ \eta_{\bx,\rho}$. 
Conversely, every representation of the type  
$(\beta \circ \eta_{\bx,\rho},\cH)$
is a bounded factor (irreducible) representation.
Two such representations are equivalent,
$\beta \circ \eta_{\bx,\rho} \simeq \beta' \circ \eta_{\bx',\rho'}$,
if and only if $\bx = \bx'$, $\rho_x \simeq \rho'_x$ for all 
$x\in \bx$, and $\beta \simeq \beta'$.
\end{thm}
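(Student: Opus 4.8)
The plan is to glue the finite-level factorization of Theorem~\ref{thm:d.25} to the $C^*$-algebraic identification assembled in the paragraphs preceding the statement. Given a bounded factor representation $(\pi,\cH)$ of $\Gamma_c(\fK)$, that discussion already furnishes a locally finite $\bx\subeq X$, irreducible representations $\rho_x$ of $\fK_x$, and an isomorphism $C^*(\pi(\Gamma_c(\fK)))\cong\cA_{\bx,\rho}$ obtained as a direct limit along the exhaustion $X=\bigcup_n Y_n$. First I would define $\beta\:\cA_{\bx,\rho}\to B(\cH)$ as the composite of this isomorphism with the inclusion $C^*(\pi(\Gamma_c(\fK)))\hookrightarrow B(\cH)$, so that $\beta$ is automatically a $*$-representation. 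The crux is the identity $\pi=\beta\circ\eta_{\bx,\rho}$, which I would establish one ideal at a time: applying Theorem~\ref{thm:d.25} with $C=Y_n$ shows that $\pi\res_{\Gamma(\fK)_{Y_n}}$ is the evaluation representation attached to $\bx\cap Y_n^0$, and this coincides with $\beta\circ\eta_{\bx,\rho}$ on $\Gamma(\fK)_{Y_n}$ because $\eta_{\bx,\rho}$ is by definition built from the same evaluations. Since the ideals $\Gamma(\fK)_{Y_n}$ exhaust $\Gamma_c(\fK)$, the two homomorphisms agree everywhere.

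The structural properties then follow formally. As $\eta_{\bx,\rho}(\Gamma_c(\fK))$ generates $\cA_{\bx,\rho}$ as a $C^*$-algebra, $\beta(\cA_{\bx,\rho})$ and $\pi(\Gamma_c(\fK))$ generate the same von Neumann algebra, giving the bicommutant identity $\beta(\cA_{\bx,\rho})''=\pi(\Gamma_c(\fK))''$; hence $\beta$ is a factor (resp.\ irreducible) representation exactly when $\pi$ is, and $\beta$ is uniquely determined since a $*$-representation of a $C^*$-algebra is fixed by its values on the norm-dense range of $\eta_{\bx,\rho}$, which $\pi=\beta\circ\eta_{\bx,\rho}$ prescribes. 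For the converse I would note that $\eta_{\bx,\rho}$ is a continuous Lie algebra homomorphism landing in the skew-hermitian elements of $\cA_{\bx,\rho}$ and that any $*$-representation $\beta$ is contractive, so $\beta\circ\eta_{\bx,\rho}$ is a bounded unitary representation of $\Gamma_c(\fK)$; the same bicommutant identity shows it is a factor (resp.\ irreducible) representation.

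Finally, for the equivalence assertion I would first recover the data $(\bx,\rho)$ from $\pi$. Restricting equivalent representations $\beta\circ\eta_{\bx,\rho}\simeq\beta'\circ\eta_{\bx',\rho'}$ to each $\Gamma(\fK)_{Y_n}$ and invoking the uniqueness of the evaluation decomposition in the compact case (Theorem~\ref{thm:d.24}, via Theorem~\ref{thm:d.25}) forces $\bx\cap Y_n^0=\bx'\cap Y_n^0$ and $\rho_x\simeq\rho_x'$ on that set; letting $n\to\infty$ yields $\bx=\bx'$ and $\rho_x\simeq\rho_x'$ throughout, whence $\cA_{\bx,\rho}=\cA_{\bx',\rho'}$ and $\eta_{\bx,\rho}=\eta_{\bx',\rho'}$, so that an intertwiner for the $\pi$'s is precisely one for the $\beta$'s and $\beta\simeq\beta'$; the reverse implication is immediate. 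The step I expect to be most delicate is the identity $\pi=\beta\circ\eta_{\bx,\rho}$: one must check that the \emph{abstract} direct-limit isomorphism $\cA_{\bx,\rho}\cong C^*(\pi(\Gamma_c(\fK)))$ is genuinely compatible with $\eta_{\bx,\rho}$, which in turn relies on the coherence of the finite-stage factorizations---specifically that the points and fiber representations extracted at stage $Y_n$ stabilize, guaranteed by the ``new points cannot coincide with old points'' observation closing the proof of Theorem~\ref{thm:d.25}.
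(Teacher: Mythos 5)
Your proposal is correct and takes essentially the same route as the paper: the paper's proof of Theorem~\ref{thm:4.12} is precisely the discussion preceding it, namely iterating Theorem~\ref{thm:d.25} along the exhaustion $X=\bigcup_n Y_n$ to obtain the locally finite set $\bx$, the representations $\rho_x$, and the direct-limit isomorphism $C^*(\pi(\Gamma_c(\fK)))\cong\cA_{\bx,\rho}$. Your definition of $\beta$ through that isomorphism, the ideal-by-ideal verification of $\pi=\beta\circ\eta_{\bx,\rho}$, and the density/bicommutant arguments for uniqueness, factoriality and the equivalence assertion are exactly the details the paper leaves implicit, and they are carried out correctly.
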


\begin{rem}
The algebra $\cA_{\bx,\rho}$ is a so-called UHF (ultra hyperfinite) $C^*$-algebra. 
These algebras have been classified by 
Glimm in \cite{Gli60}, where one also finds a characterization 
of their pure states. Even stronger results were obtained later by 
Powers in \cite{Po67}, where he  shows that the automorphism group 
acts transitively on the set of pure states, so that every irreducible 
representation is a twist (by an automorphism) of an infinite 
tensor product of irreducible representations. 
Together with these results, Theorem~\ref{thm:4.12} 
provides a complete description of the 
bounded irreducible unitary representations of $\Gamma_c(\fK)$.  
\end{rem}

\begin{ex} \mlabel{ex:a.5}
If $\cA = \hat\bigotimes_{x\in\bx} B(V_x)$ is an UHF $C^*$-algebra, 
then typical examples of irreducible representations 
are the infinite tensor products 
$\hat\otimes_{x\in\bx} (V_x, v_x)$, where 
$v_x \in V_x$ is a unit vector. All these representations 
are irreducible (\cite[Prop.~5.2.1]{Sam91}; 
see also \cite[Prop.~4.4.3]{Sa71}), but they do not exhaust all 
irreducible representations. The restriction of these representations 
to the canonical maximal abelian subalgebras is multiplicity free. 
This is not true for all irreducible unitary representations 
 (cf.~\cite[Sect.~5.2]{Sam91}). Two such infinite tensor product representations 
corresponding to the sequences $(v_x)$ and $(w_x)$ of unit vectors 
are unitarily equivalent if and only if 
\[ \sum_{x\in \bx} 1 - |\la v_x, w_x \ra| < \infty \] 
(\cite[Prop.5.2.2]{Sam91}). 
In particular, there exist infinitely many non-equivalent irreducible unitary representations. 
The above condition is equivalent to 
\[ \sum_{x\in \bx}^\infty d([v_x], [w_x])^2 < \infty\,, \] 
since the natural metric on the projective 
space $\bP(V_{x})$ satisfies  
$d([v], [w])^2 = 2(1 - |\la v,w \ra|)$ 
(\cite[Lemma~3.2]{Ne12}).

\end{ex}

\subsection{Translation to the group context}
We have carried out our classification of the bounded 
unitary representations on the Lie algebra level, which is equivalent to 
working with the corresponding $1$-connected groups. However, 
some natural mapping groups, such as $G := C^\infty(X,K)$, where $X$ is compact 
(take f.i.~$X = \bS^3$) and 
$K$ is a $1$-connected compact group, are neither connected nor 
simply connected. 

Let $P \rightarrow X$ be a principal $K$-bundle over a {\em compact} space $X$,
with $K$ compact semisimple.
Since every irreducible bounded representation of 
$\Gamma(\Ad(P))$ is a finite tensor product of irreducible evaluation representations 
and every irreducible evaluation representation obviously integrates to a representation 
of $\Gau(P)$, all bounded unitary representations do. They actually 
factor through quotient homomorphisms $\Gau(P) \to \Pi_{x\in \bx} \Gau(P|_{x})$, given by evaluation
in the finite subset $\bx \subseteq X$.
Each factor $\Gau(P|_{x})$ is isomorphic to $K$, so 
$\Pi_{x\in \bx} \Gau(P|_{x})$ is isomorphic to the
$1$-connected group $K^{\bx}$. 
In particular, the group $\pi_0(\Gau(P))$ acts trivially 
on the set 
$\hat{\Gau}(P)_0{}^b$ 
of equivalence classes of bounded irreducible 
unitary representations of the identity component $\Gau(P)_0$. 
If $\Gau(P)$ is not connected, then a bounded irreducible 
unitary representation is not determined by its derived 
Lie algebra representation. 
In this context however, we do have the following theorem:

\begin{thm}
Every irreducible bounded unitary representation of $\Gau(P)$
is equivalent to $\pi_1 \otimes \pi_2$, where 
$\pi_1$ is a finite tensor product of irreducible evaluation representations
at different points,
and $\pi_2$ comes from an irreducible unitary 
representation of the discrete group $\pi_0(\Gau(P))$. 
Conversely, any such tensor product is irreducible.
\end{thm}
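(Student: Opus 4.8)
The plan is to run Clifford--Mackey theory for the extension
\[ 1 \to \Gau(P)_0 \to \Gau(P) \to \pi_0(\Gau(P)) \to 1, \]
exploiting that $\pi_0(\Gau(P))$ has already been shown to act trivially on the set $\hat{\Gau}(P)_0{}^b$ of equivalence classes of bounded irreducible unitary representations of the identity component. Write $G := \Gau(P)$ and $N := \Gau(P)_0$, and let $(\pi, \cH)$ be an irreducible bounded unitary representation. First I would restrict $\pi$ to the normal subgroup $N$. Since $\pi$ is norm continuous, so is $\pi\res_N$, and by the disintegration of bounded representations (as in the Remark following Proposition~\ref{prop:6.1}, using \cite{Dix77} in the separable and \cite{He82} in the inseparable case) together with Theorem~\ref{thm:d.24}, every irreducible constituent of $\pi\res_N$ is a finite tensor product of irreducible evaluation representations at distinct points; in particular it lies in $\hat{\Gau}(P)_0{}^b$ and is finite-dimensional.

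As $N \trile G$, the support of $\pi\res_N$ in $\hat N$ is a single $G$-quasi-orbit for the conjugation action $(g\cdot\tau)(n) := \tau(g^{-1}ng)$; since this action is trivial on $\hat{\Gau}(P)_0{}^b$, the quasi-orbit is a point, so all constituents are equivalent to a single $\sigma$. Hence $\pi\res_N$ is $\sigma$-isotypic, and because $\dim V_\sigma < \infty$ this gives a unitary identification $\cH \cong V_\sigma \otimes \cM$ under which $\pi(g_0) = \sigma(g_0) \otimes \1$ for $g_0 \in N$, with $\cM$ the multiplicity space. The next step is to extend $\sigma$ to all of $G$: being a finite tensor product $\sigma = \bigotimes_{x \in \bx}\rho_x \circ \ev_x$ of evaluation representations, it is defined by the very same formula on $G$, as each $g \mapsto \rho_x(\tilde g(p_x))$ makes sense for arbitrary $g \in \Gau(P)$. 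Call this extension $\pi_1$; it is an irreducible bounded representation of $G$ with $\pi_1\res_N = \sigma$.

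For $g \in G$, both $\pi(g)$ and $\pi_1(g)\otimes\1$ implement the automorphism $\sigma(g_0)\otimes\1 \mapsto \sigma(gg_0g^{-1})\otimes\1$ of $\pi(N) = \sigma(N)\otimes\1$ (for the second, use that $\pi_1$ is a homomorphism extending $\sigma$). Hence $(\pi_1(g)^{-1}\otimes\1)\pi(g)$ commutes with $\sigma(N)\otimes\1$, so by Schur's Lemma it lies in $(\sigma(N)\otimes\1)' = \1\otimes B(\cM)$; write it as $\1\otimes\pi_2(g)$ with $\pi_2(g)\in\U(\cM)$. This yields $\pi(g) = \pi_1(g)\otimes\pi_2(g)$, i.e.\ $\pi \simeq \pi_1\otimes\pi_2$. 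Since $\pi$ and $\pi_1$ are homomorphisms and $\pi_1(g)$ is invertible, $\pi_2$ is a homomorphism; comparing $\pi(g_0) = \sigma(g_0)\otimes\1$ with $\sigma(g_0)\otimes\pi_2(g_0)$ forces $\pi_2\res_N = \1$, so $\pi_2$ factors through $\pi_0(\Gau(P))$.

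Finally, I would read off irreducibility from the commutant. Any $T \in \pi(G)'$ commutes with $\pi(N) = \sigma(N)\otimes\1$, hence $T = \1\otimes S$ with $S\in B(\cM)$; then $T\in\pi(G)'$ forces $S\in\pi_2(G)'$, so $\pi(G)' = \1\otimes\pi_2(G)'$ and $\pi$ is irreducible if and only if $\pi_2$ is. Thus $\pi_2$ comes from an irreducible unitary representation of the discrete group $\pi_0(\Gau(P))$, proving the first assertion. For the converse, given such $\pi_1$ and $\pi_2$, the restriction $\pi_1\res_N$ is already irreducible: by a partition of unity argument the joint evaluation $\Gau(P)_0 \to K^{\bx}$ is surjective, so $\pi_1\res_N$ pulls back an irreducible representation of $K^{\bx}$, and the identical commutant computation gives $(\pi_1\otimes\pi_2)(G)' = \1\otimes\pi_2(G)' = \C\1$. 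The main obstacle is the isotypic decomposition of $\pi\res_N$: it is precisely here that one needs both the disintegration theory in the possibly inseparable setting and the previously established triviality of the $\pi_0(\Gau(P))$-action on $\hat{\Gau}(P)_0{}^b$, since $\pi_0(\Gau(P))$ may well be infinite, so that no naive finite-index Clifford theory is available.
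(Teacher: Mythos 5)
Your proof is correct and takes essentially the same route as the paper: restrict to $\Gau(P)_0$, use the finite-dimensionality of its bounded irreducibles (Theorem~\ref{thm:d.24}) together with the triviality of the $\pi_0(\Gau(P))$-action on $\hat{\Gau}(P)_0{}^b$ to force $\pi\res_{\Gau(P)_0}$ to be isotypic, extend the isotypic component to all of $\Gau(P)$ via the evaluation formula, and split off $\pi_2$ as a representation of $\pi_0(\Gau(P))$. The differences are only in packaging: the paper carries out the isotypic step through the type I $C^*$-algebra $C^*(\pi(\Gau(P)_0))$ (Sakai's theorem plus Dixmier's canonical direct integral and an ergodicity argument) where you invoke Mackey quasi-orbits, and it leaves the Schur-lemma tensor factorization and the converse implicit, both of which you spell out correctly.
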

\begin{prf}
We set $G := \Gau(P)$.
Let $\cA := C^*(\pi(G_0)) = C^*(\dd\pi(\g)) \subeq B(\cH)$ denote the 
$C^*$-algebra generated by $\pi(G_0)$. Then every irreducible 
representation $\beta$ of $\cA$ defines a bounded unitary representation of $G_0$,
hence is finite-dimensional. In particular, the image of $\cA$ in 
every irreducible representation contains the compact operators, 
so that $\cA$ is type~I (\cite{Sa67}). 
The group $G$ acts by conjugation on $\cA$ 
but we have already seen above that its action on the space 
$\hat\cA$ of equivalence classes of irreducible representations of 
$\cA$ is trivial. 

Since $\cH$ and $\cA$ are separable and $\cA$ is of type~I, 
the representation of $\cA$ on $\cH$ has a canonical direct integral decomposition 
\[ \cH 
\cong \int^\oplus_{\hat\cA} \cH_\alpha\, d\mu(\alpha)
\cong \bigoplus_{n \in \N \cup \{\infty\}} \int_{\hat\cA_n} \cH_\alpha\, d\mu_n(\alpha) \] 
for a measure $\mu$ on $\hat\cA$ which is a sum of disjoint measures 
$\mu_n := \mu\res_{\hat\cA_n}$, $n \in \N \cup \{\infty\}$, where 
$(\hat\cA_n)_{n \in \N \cup \{\infty\}}$ is a measurable partition of $\hat\cA$, 
and for which the representation on $\cH_\alpha$, $\alpha \in \hat\cA_n$, 
is an $n$-fold multiple of an irreducible representation of type $\alpha$ 
(\cite[Thm.~8.6.6]{Dix77}). 
Here the measure classes $[\mu_n]$, $n \in \N \cup \{\infty\}$, 
are uniquely determined by the representation. 
Since $G$ acts trivially on $\hat\cA$, it preserves all these measure 
classes. Therefore the irreducibility of the representation 
implies that only one of these measures is non-zero, and that this measure 
is ergodic for the $G$-action on $\hat\cA$, hence a point measure 
because the action is trivial. 
Therefore $\pi\res_{G_0}$ is a factor representation of type I, 
hence a multiple of a tensor product 
of irreducible evaluation representations at different points. This implies that 
$\pi \cong \pi_1 \otimes \pi_2$, where 
$\pi_1$ is a finite tensor product of irreducible evaluation representations
at different points,
and $\pi_2$ vanishes on $G_0$, hence defines an irreducible unitary 
representation of the discrete group $\pi_0(G)$. Conversely, any such tensor product 
is irreducible. 
\end{prf}

\section{Noncompact fibers and projective representations} 
\mlabel{sec:5}

In this section we show that the problem of classifying bounded 
irreducible \emph{projective} unitary representations 
$\overline{\pi} \: \Gamma_{c}(\fK) \rightarrow \fp\fu (\cH)$,
where the typical fiber $\fk$ of $\fK$ is an \emph{arbitrary}
finite-dimensional real Lie algebra, reduces to Theorems \ref{thm:d.24} and \ref{thm:4.12}.
This justifies our assumptions that $\fk$ is compact semisimple
and that $\pi \: \Gamma_{c}(\fK) \rightarrow \fu (\cH)$ is a linear 
representation.


We start with the observation that a finite-dimensional Lie algebra 
$\fk$ is compact if and only if its adjoint group 
$\la e^{\ad \fk} \ra$ is relatively compact in $\GL(\fk)$, which in turn is equivalent 
to the existence of an adjoint invariant norm on $\fk$. 
Next we observe that every finite-dimensional Lie algebra $\fk$ 
contains a unique minimal ideal $\fn \trile \fk$ for which 
$\fk/\fn$ is compact. This is a direct consequence of the fact that 
direct sums and subalgebras of compact Lie algebras are compact, 
which implies that the set of all ideals with compact quotients is filtered.

\begin{prop} \mlabel{prop:6.3} Let $\cA$ be a real commutative associative locally convex 
algebra and let $p$ be a bounded seminorm on $\fk \otimes \cA$ 
which is invariant under $e^{\ad \fk}$. Let 
$\fn \trile \fk$ be the minimal ideal for which 
$\fk/\fn$ is a compact Lie algebra. Then $\fn \otimes \cA \subeq p^{-1}(0)$. 
\end{prop}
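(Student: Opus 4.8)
The plan is to decouple the algebra $\cA$ from the Lie algebra $\fk$ by fixing an element of $\cA$, thereby reducing the statement to a purely finite-dimensional assertion about $\ad$-invariant seminorms on $\fk$. First I would fix $a \in \cA$ and consider the linear map $\iota_a \: \fk \to \fk \otimes \cA$, $z \mapsto z \otimes a$, together with the pulled-back functional $p_a := p \circ \iota_a$. Since $p$ is a seminorm, so is $p_a$, now on the finite-dimensional space $\fk$. The crucial observation is that $\iota_a$ intertwines the adjoint action on $\fk$ with the action on $\fk \otimes \cA$, because $e^{\ad x}$ acts on $\fk \otimes \cA$ as $e^{\ad x} \otimes \id_\cA$ and hence sends $z \otimes a$ to $(e^{\ad x} z) \otimes a$. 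Invariance of $p$ under $e^{\ad \fk}$ therefore transfers to invariance of $p_a$ under $e^{\ad \fk}$ acting on $\fk$ itself, i.e.\ $p_a(e^{\ad x} z) = p(z \otimes a) = p_a(z)$.

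Next I would analyze the kernel $\fm := p_a^{-1}(0)$. As the zero set of an $e^{\ad \fk}$-invariant seminorm, $\fm$ is a linear subspace stable under every $e^{\ad x}$, hence (differentiating the one-parameter group at $0$, using that $\fm$ is closed) stable under every $\ad x$, so $\fm \trile \fk$ is an ideal. The seminorm $p_a$ then descends to a genuine \emph{norm} $\oline{p_a}$ on the quotient $\fk/\fm$, and since the quotient map intertwines $\ad_\fk$ with the adjoint action of $\fk/\fm$, this norm is invariant under the adjoint group of $\fk/\fm$. By the characterization recalled above (a finite-dimensional Lie algebra is compact if and only if it carries an adjoint-invariant norm), $\fk/\fm$ is compact. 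Minimality of $\fn$ among the ideals with compact quotient now forces $\fn \subeq \fm = p_a^{-1}(0)$, that is, $p(z \otimes a) = 0$ for all $z \in \fn$.

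Finally, since $a \in \cA$ was arbitrary, $p$ vanishes on every simple tensor $z \otimes a$ with $z \in \fn$. An arbitrary element of $\fn \otimes \cA$ is a finite sum $\sum_i z_i \otimes a_i$ with $z_i \in \fn$, so the triangle inequality gives $p\big(\sum_i z_i \otimes a_i\big) \le \sum_i p(z_i \otimes a_i) = 0$, whence $\fn \otimes \cA \subeq p^{-1}(0)$ as claimed. The argument is essentially formal once the reduction is in place; the only point that needs genuine care is the identification of $p_a^{-1}(0)$ as an ideal together with the verification that the induced norm on $\fk/\fm$ is invariant under the \emph{intrinsic} adjoint group of the quotient, which is exactly what licenses the appeal to the compactness criterion. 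Note that the boundedness hypothesis on $p$ is not needed beyond the seminorm axioms here, since any seminorm on the finite-dimensional space $\fk$ is automatically continuous.
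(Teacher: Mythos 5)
Your proof is correct and takes essentially the same route as the paper: pull $p$ back along $x \mapsto x \otimes a$ to an $e^{\ad \fk}$-invariant seminorm $p_a$ on $\fk$, observe that the induced norm on $\fk/p_a^{-1}(0)$ is adjoint-invariant so that this quotient is compact, and invoke minimality of $\fn$ to get $\fn \subeq p_a^{-1}(0)$ for every $a \in \cA$. The paper's proof is a terser version of exactly this argument; the details you supply (the kernel being an ideal, invariance under the intrinsic adjoint group of the quotient, and the final triangle-inequality step) are the ones it leaves implicit.
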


\begin{prf} Let $\g = \fk \otimes \cA$. 
For each $a \in \cA$ we consider the seminorm 
$p_a(x) := p(x \otimes a)$ on $\fk$. Since the 
map $\fk \to \fk \otimes \cA, x \mapsto x \otimes a$ is 
$\fk$-equivariant, the seminorm $p_a$ on $\fk$ is invariant. 
Therefore $p_a$ induces on the quotient 
$\fk/p_a^{-1}(0)$ an invariant norm, so that this Lie 
algebra is compact. This implies that 
$\fn \subeq p_a^{-1}(0)$, and hence that 
$\fn \otimes \cA \subeq p^{-1}(0)$. 
\end{prf}

\begin{cor} \mlabel{cor:1} Let $p$ be a bounded seminorm on $C^\infty(X, \fk)$ or 
$C^\infty_c(X,\fk)$, which is invariant under the adjoint action and 
let $\fn \trile \fk$ be the minimal ideal for which 
$\fk/\fn$ is a compact Lie algebra. Then 
$C^\infty(X,\fn)$ and 
$C^\infty_c(X,\fn)$, respectively, are contained in the closed ideal $p^{-1}(0)$. 
\end{cor}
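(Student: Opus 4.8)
The plan is to read both statements as instances of Proposition~\ref{prop:6.3}, with the algebra $\cA$ chosen to be $C^\infty(X,\R)$ in the compact case and $C^\infty_c(X,\R)$ in the non-compact case, the only real work being the verification that ``invariance under the adjoint action'' specializes to the invariance under $e^{\ad\fk}$ required there. First I would record the standard identification: since $\fk$ is finite-dimensional, fixing a basis $x_1,\dots,x_m$ of $\fk$ writes every section uniquely as $\sum_i x_i\otimes f_i$, giving an isomorphism of locally convex Lie algebras $C^\infty(X,\fk)\cong \fk\otimes_\R\cA$ (resp.\ $C^\infty_c(X,\fk)\cong\fk\otimes_\R\cA$) carrying the pointwise bracket to $[x\otimes a,y\otimes b]=[x,y]\otimes ab$. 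Here $\cA$ is a real commutative associative locally convex algebra, as required, and $C^\infty(X,\fn)$ corresponds to $\fn\otimes_\R\cA$; so the claim to prove is exactly $\fn\otimes_\R\cA\subeq p^{-1}(0)$.

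For the hypothesis of Proposition~\ref{prop:6.3} I need $p$ to be invariant under $e^{(\ad y)\otimes\id}$ for all $y\in\fk$. The basic computation is $\ad(y\otimes a)(z\otimes b)=[y,z]\otimes ab$, so $\ad(y\otimes a)$ acts as $(\ad y)\otimes\id$ on every $z\otimes b$ for which $a$ equals $1$ on the support of $b$. When $X$ is compact, $\cA$ is unital, the constant section $y\otimes 1$ is a genuine element of $C^\infty(X,\fk)$, and $\ad(y\otimes 1)=(\ad y)\otimes\id$ on all of $\fk\otimes_\R\cA$; hence $e^{\ad(y\otimes 1)}=e^{(\ad y)\otimes\id}$. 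Since $p$ is invariant under the inner automorphisms $e^{\ad(y\otimes 1)}$, it is invariant under $e^{(\ad y)\otimes\id}$, and Proposition~\ref{prop:6.3} yields $C^\infty(X,\fn)=\fn\otimes_\R\cA\subeq p^{-1}(0)$.

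The main (if minor) obstacle is the non-compact case, where $\cA=C^\infty_c(X,\R)$ has no unit and constant sections are unavailable; here I would substitute bump functions. Given $z=\sum_i x_i\otimes f_i$ with compact support $S$, I choose $h\in C^\infty_c(X,\R)$ with $h\equiv 1$ on a neighbourhood of $S$. Then $\ad(y\otimes h)$ coincides with $(\ad y)\otimes\id$ on the finite-dimensional subspace $\fk\otimes\Spann\{f_i\}$, which contains $z$ and is preserved by this operator; consequently $e^{\ad(y\otimes h)}z=e^{(\ad y)\otimes\id}z$. As $p$ is invariant under the inner automorphism $e^{\ad(y\otimes h)}$, this gives $p\big(e^{(\ad y)\otimes\id}z\big)=p(z)$ for every $z$ (with $h$ allowed to depend on $z$, which is harmless), and invariance under each generator $e^{(\ad y)\otimes\id}$, for all $z$, propagates to the group they generate. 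Thus Proposition~\ref{prop:6.3} applies again and gives $C^\infty_c(X,\fn)=\fn\otimes_\R\cA\subeq p^{-1}(0)$. Finally, $p^{-1}(0)$ is closed since the bounded seminorm $p$ is continuous, and it is an ideal: for $t\in p^{-1}(0)$ and any $\xi$, the curve $s\mapsto e^{s\,\ad\xi}t$ lies in the closed subspace $p^{-1}(0)$, so its derivative $[\xi,t]$ at $s=0$ does too.
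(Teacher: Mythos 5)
Your proposal is correct and essentially reproduces the paper's own argument: in the compact case both use the unit of $C^\infty(X,\R)$ so that $\fk\otimes\1$ supplies the $e^{\ad\fk}$-invariance needed for Proposition~\ref{prop:6.3}, and in the non-compact case both simulate the missing unit by a bump function $\chi$ with $\chi\equiv 1$ on the relevant compact set. The only (immaterial) difference is organizational: the paper restricts $p$ to the subalgebras $\fk\otimes\cA_Y$, $Y\subeq X$ compact, and applies Proposition~\ref{prop:6.3} to each, while you verify the invariance element by element on all of $\fk\otimes C^\infty_c(X,\R)$ and apply the proposition once.
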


\begin{prf} The Lie algebra $C^\infty(X,\fk)$ is covered by the 
preceding proposition because $C^\infty(X,\R)$ is unital, 
so that $\fk \cong \fk \otimes \1 \subeq \fk \otimes \cA$. 

For $\cA = C^\infty_c(X,\R)$, we fix a compact subset 
$Y \subeq X$ and consider the subalgebra 
\[ \cA_Y := C^\infty(X,\R)_Y := \{ f \in C^\infty(X,\R) \: \supp(f) \subeq Y\}.\] 
Let $\chi \in C^\infty_c(X,\R)$ with $\chi\res_Y = 1$. 
For $x \in \fk$ we then have 
\[ \ad (x \otimes \chi)(y \otimes a) = [x,y] \otimes a 
\quad \mbox{ for } \quad a \in \cA_Y.\] 
Therefore the restriction of $p$ to 
$\fk \otimes \cA_Y$ is invariant under 
$e^{\ad \fk}$, so that the assertion follows from Proposition~\ref{prop:6.3}.
\end{prf}

\begin{cor} Let $\fK \to X$ be a $\fk$-Lie algebra bundle, where 
$\fk$ is a finite-dimensional real Lie algebra, 
let $\fn \trile \fk$ be the minimal ideal for which 
$\fk/\fn$ is a compact Lie algebra, and let
$\fN \subeq \fK$ be the corresponding subbundle. 
If $\oline\pi \: \Gamma_c(\fK) \to \pu(\cH)$ 
is a projective unitary representation, 
then $\Gamma_c(\fN) \subeq \ker \oline\pi$. 
\end{cor}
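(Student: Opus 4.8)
The plan is to recognize $\ker\oline\pi$ as the null space of a bounded, adjoint-invariant seminorm and then feed this seminorm into Corollary~\ref{cor:1} in local trivializations. Concretely, I would set
\[ p(s) := \|\oline\pi(s)\|, \qquad s \in \Gamma_c(\fK), \]
where $\|\cdot\|$ is the quotient norm on $\pu(\cH) = \fu(\cH)/i\R\1$. Since $\oline\pi$ is continuous, $p$ is a bounded seminorm, and since $\|\oline\pi(s)\|=0$ is equivalent to $\oline\pi(s)=0$, we have $p^{-1}(0)=\ker\oline\pi$. It therefore suffices to prove that $p$ is invariant under the adjoint action and that $\Gamma_c(\fN)\subeq p^{-1}(0)$.

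The first key step is invariance of $p$ under every inner automorphism $e^{\ad\xi}$, $\xi\in\Gamma_c(\fK)$. Because the fibre $\fk$ is finite-dimensional, the curve $\gamma(t):=e^{t\,\ad\xi}s$ is well-defined pointwise by $\gamma(t)(x)=e^{t\,\ad(\xi(x))}(s(x))$; it has support inside the fixed compact set $\supp(s)$, is smooth as a curve in $\Gamma_c(\fK)$, and satisfies $\gamma'(t)=[\xi,\gamma(t)]$. Applying the continuous homomorphism $\oline\pi$, the curve $u(t):=\oline\pi(\gamma(t))$ solves the linear ODE $u'(t)=[\oline\pi(\xi),u(t)]$ in $\pu(\cH)$. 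As $\oline\pi(\xi)$ has a bounded representative, $\ad(\oline\pi(\xi))$ is a bounded operator, so the unique solution is $u(t)=e^{t\,\ad(\oline\pi(\xi))}\oline\pi(s)$. Here $e^{t\,\ad(\oline\pi(\xi))}$ is the adjoint action of a projective-unitary one-parameter group, which preserves the quotient norm on $\pu(\cH)$; evaluating at $t=1$ gives $p(e^{\ad\xi}s)=\|u(1)\|=\|\oline\pi(s)\|=p(s)$.

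Next I would localize. By Remark~\ref{rem:decomp} choose a locally finite cover $(U_j)_{j\in J}$ by relatively compact open sets over neighborhoods of whose closures $\fK$ — and hence the canonically defined subbundle $\fN$, whose fibre $\fn$ is $\Aut(\fk)$-invariant — is trivial, so that $\Gamma_c(\fK\res_{U_j})\cong C^\infty_c(U_j,\fk)$ and $\Gamma_c(\fN\res_{U_j})\cong C^\infty_c(U_j,\fn)$. The restriction $p_j$ of $p$ to $\Gamma_c(\fK\res_{U_j})$ is a bounded seminorm, and by the previous paragraph (applied to $\xi\in\Gamma_c(\fK\res_{U_j})$) it is invariant under the adjoint action of $C^\infty_c(U_j,\fk)$ on itself. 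Corollary~\ref{cor:1} then yields $C^\infty_c(U_j,\fn)\subeq p_j^{-1}(0)$, i.e.\ $\Gamma_c(\fN\res_{U_j})\subeq p^{-1}(0)=\ker\oline\pi$ for every $j$.

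Finally, any $s\in\Gamma_c(\fN)$ has compact support, so via a partition of unity subordinate to $(U_j)$ it is a \emph{finite} sum $s=\sum_j\psi_j s$ with $\psi_j s\in\Gamma_c(\fN\res_{U_j})$. Each summand lies in the linear subspace $\ker\oline\pi$, hence so does $s$; this gives $\Gamma_c(\fN)\subeq\ker\oline\pi$. I expect the main obstacle to be the invariance step: one must verify that $t\mapsto e^{t\,\ad\xi}s$ is a genuine smooth curve in the LF-Lie algebra $\Gamma_c(\fK)$ (not merely pointwise), so that the ODE computation in $\pu(\cH)$ is legitimate — this is precisely where finite-dimensionality of $\fk$ and the fixed compact support of the curve are used.
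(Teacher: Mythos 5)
Your proof is correct and follows essentially the same route as the paper's: define the adjoint-invariant seminorm $p(s)=\|\oline\pi(s)\|$ on $\Gamma_c(\fK)$, localize via Remark~\ref{rem:decomp}, and apply Corollary~\ref{cor:1} on each trivializing chart. The only difference is that you spell out two steps the paper leaves implicit, namely the ODE argument showing that $\oline\pi$ intertwines the adjoint actions (so that invariance of the norm on $\pu(\cH)$ really yields invariance of $p$) and the final partition-of-unity summation.
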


\begin{prf} Since the operator norm on $\fu(\cH)$ is invariant under the 
adjoint action of $\U(\cH)$, it induces on 
the Banach--Lie algebra $\pu(\cH) = \fu(\cH)/i\R \1$ 
a norm which is also invariant under the adjoint action. 
Therefore $p(s) := \|\oline\pi(s)\|$ is an invariant seminorm 
on $\Gamma_c(\fK)$. We have to show  that 
$\Gamma_c(\fN) \subeq p^{-1}(0)$. 

In view of Remark~\ref{rem:decomp}, $\Gamma_c(\fK)$ 
is a sum of ideals of the form 
$\Gamma_c(\fK\res_U) \cong C^\infty_c(U,\fk)$, where 
$\fK\res_U$ is trivial. It therefore suffices to observe that 
Corollary~\ref{cor:1} implies  
$C^\infty_c(U,\fn) \cong \Gamma_c(\fN\res_U) \subeq p^{-1}(0)$. 
\end{prf}

The preceding corollary shows that, for the sake of classifying bounded projective 
unitary representations of the Lie algebra $\Gamma_c(\fK)$, we may w.l.o.g.\ 
assume that $\fk$ is compact.
As the following theorem shows, 
the corresponding cocycles must be trivial.

\begin{thm} 
If $\fK$ is a bundle of Lie algebras with compact fiber, 
then every bounded projective unitary representation 
of $\Gamma_{c}(\fK)$ lifts to a bounded unitary representation.
\end{thm}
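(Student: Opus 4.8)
The plan is to show that the $2$-cocycle classifying the central extension pulled back by $\overline\pi$ is a coboundary. By the reduction just established I may assume that $\fk$ is compact, and I write $\g := \Gamma_c(\fK)$. First I would produce a bounded linear (not yet homomorphic) lift of $\overline\pi \: \g \to \pu(\cH)$. Fixing a unit vector $e \in \cH$, the map $P \: \fu(\cH) \to i\R\1$, $P(A) := \langle Ae, e\rangle\1$, is a continuous projection which splits the central extension $i\R\1 \to \fu(\cH) \to \pu(\cH)$; composing $\overline\pi$ with the induced continuous linear section gives a continuous linear map $\tilde\pi \: \g \to \fu(\cH)$ with values in bounded operators and $\mathrm{pr}\circ\tilde\pi = \overline\pi$. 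Since $\overline\pi$ is a homomorphism, $[\tilde\pi(x),\tilde\pi(y)] - \tilde\pi([x,y])$ lies in $\ker(\mathrm{pr}) = i\R\1$, so there is a continuous bilinear $\omega \: \g\times\g\to i\R$ with $[\tilde\pi(x),\tilde\pi(y)] = \tilde\pi([x,y]) + \omega(x,y)\1$, and the Jacobi identity shows $\omega$ is a continuous $2$-cocycle. If $\omega = d\beta$ for a continuous linear $\beta\:\g\to i\R$, then $\pi := \tilde\pi - \beta\cdot\1$ satisfies $[\pi(x),\pi(y)] = \pi([x,y])$ and is a bounded unitary representation lifting $\overline\pi$; so everything reduces to proving $[\omega]=0$ in continuous cohomology.

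The key observation is that $\omega$ vanishes on commuting pairs. Indeed, if $[x,y]=0$ then $\omega(x,y)\1 = [\tilde\pi(x),\tilde\pi(y)]$ exhibits the scalar $\omega(x,y)$ as a commutator of two \emph{bounded} operators, and by the classical theorem of Wielandt and Wintner (no nonzero scalar is a commutator in a unital Banach algebra) this forces $\omega(x,y)=0$. This property is shared by every coboundary, since $(d\beta)(x,y) = -\beta([x,y])$ also vanishes when $[x,y]=0$; hence ``vanishing on commuting pairs'' is well defined on cohomology classes. Writing $\fk = \fz(\fk)\oplus[\fk,\fk]$ and, as in Remark~\ref{rem:4.2}, $\fK = Z(\fK)\oplus[\fK,\fK]$, I obtain $\g = \g_{\mathrm{ab}}\oplus\g_{\mathrm{ss}}$ as a direct sum of ideals with $\g_{\mathrm{ab}} = \Gamma_c(Z(\fK))$ abelian and $[\g_{\mathrm{ab}},\g_{\mathrm{ss}}]=0$. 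Thus every pair meeting $\g_{\mathrm{ab}}$ commutes, so $\omega$ already vanishes there, and only $\omega\vert_{\g_{\mathrm{ss}}\times\g_{\mathrm{ss}}}$ remains to be controlled.

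To finish I would invoke the classification of continuous central extensions of section algebras from \cite{JW10}: every continuous $2$-cocycle on $\Gamma_c([\fK,\fK])$ is cohomologous to one built from a symmetric invariant bilinear form $\kappa$ on the semisimple fibre and a reduced $1$-form, of the schematic local shape $\omega_\kappa(\xi\otimes f,\eta\otimes g)=\kappa(\xi,\eta)\,[f\,dg]$ in a trivialization. If such a class is nonzero one can choose $\xi$ and functions $f,g$ with $\kappa(\xi,\xi)[f\,dg]\neq 0$; since $\xi\otimes f$ and $\xi\otimes g$ commute, $\omega_\kappa$ does \emph{not} vanish on all commuting pairs. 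As $\omega$ vanishes on commuting pairs and this passes to the cohomologous representative $\omega_\kappa$, we get $\omega_\kappa=0$, i.e.\ $[\omega]=0$; extending the primitive $\beta$ on $\g_{\mathrm{ss}}$ by $0$ on $\g_{\mathrm{ab}}$ yields $\omega = d\beta$ on all of $\g$, and $\tilde\pi - \beta\cdot\1$ is the desired lift. The main obstacle is precisely this last step: converting the elementary, representation-theoretic vanishing on commuting pairs into triviality of the class genuinely uses the explicit structure of $H^2$ of these algebras (that every nontrivial central extension is of derivative type and hence detected on some commuting pair), and it is the presence of a possibly nontrivial bundle $\fK$, rather than a trivial product, that forces the appeal to \cite{JW10} in place of a bare-hands local computation.
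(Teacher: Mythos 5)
Your proof is correct and follows essentially the same route as the paper: split off the center via $\fK = Z(\fK)\oplus[\fK,\fK]$, show the cocycle vanishes on commuting pairs by a bounded-operator commutator argument (your appeal to Wielandt--Wintner is the same mechanism as the paper's observation that a bounded unitary representation of a two-step nilpotent Lie algebra kills the commutator), and then invoke the classification of \cite{JW10} together with polarization to force the cocycle class to be trivial. The only cosmetic differences are your explicit construction of $\omega$ via a continuous linear section of $\fu(\cH)\to\pu(\cH)$ and your direct commuting-pairs treatment of the mixed center--semisimple terms, where the paper instead combines the cocycle identity with perfectness of $\Gamma_c([\fK,\fK])$.
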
 

\begin{prf} Let $\pi \colon \Gamma_{c}(\mathfrak{K}) \rightarrow \mathfrak{u}(\mathcal{H})$
be a bounded projective representation. Then there exists a cocycle
$\omega \colon \Gamma_{c}(\mathfrak{K}) \times \Gamma_{c}(\mathfrak{K}) \rightarrow
\mathbb{R}$ such that $\pi$ is a unitary representation of 
$\R c \oplus_\omega \Gamma_{c}(\fK)$ mapping $c$ to $i\mathbf{1}$.
As the decomposition $\fk = \mathfrak{z}(\fk) \oplus [\fk,\fk]$ is $\mathrm{Aut}(\fk)$-invariant, 
it leads to a corresponding direct sum $\fK = Z(\fK) \oplus [\fK,\fK]$ of Lie algebra bundles, 
and hence to $\Gamma_{c}(\fK) = \Gamma_{c}(\mathfrak{z}(\fK)) \oplus \Gamma_{c}([\fK,\fK])$.

First of all, we show that $\omega$ vanishes on 
$\Gamma_{c}(\mathfrak{z}(\fK)) \times \Gamma_{c}(\mathfrak{z}(\fK))$.
Indeed, for all 
$z,z' \in \Gamma_{c}(\mathfrak{z}(\fK))$,
$\pi$ defines a bounded representation of the at most $2$-step nilpotent Lie algebra 
spanned by $z$, $z'$ and $c$, and thus vanishes on 
$[z,z'] = \omega(z,z')c$. 
This implies that $\omega(z,z') = 0$. 
Furthermore, the cocycle property implies that $\omega(z,[\xi,\eta]) = 0$ for all 
$z\in \Gamma_{c}(\mathfrak{z}(\fK))$
and $\xi,\eta \in \Gamma_{c}(\fK)$, so that we may assume w.l.o.g.\ 
that $\fk$ is compact semisimple.
 
Let $\mathbb{V} := S^2(\fK)/\langle \mathrm{ad}(\fK)\cdot S^2(\fK) \rangle$, $\kappa \colon 
\fK \times \fK \rightarrow \mathbb{V}$ be the universal invariant symmetric 
bilinear bundle map. Then $\bV$ is a flat bundle, hence carries a canonical 
differential $\dd \: \Gamma(\bV) \to \Omega^1(X,\bV)$.  
We know from \cite{JW10} that 
there exists a Lie connection $\nabla$ on $\fK$ and 
a continuous linear functional $\lambda$ 
on $\Omega_{c}^1(M,\bV)$ vanishing on $\mathbf{d}\Gamma_c(\bV)$, such that 
$\omega$ is cohomologous to the cocycle 
$(\xi,\eta) \mapsto \lambda(\kappa(\xi,\nabla\eta))$. 
Suppose that $\lambda \neq 0$. Then there exists a $\gamma \in \Omega_{c}^1(M,\mathbb{V})$
with $\lambda(\gamma) \neq 0$. We may assume w.l.o.g.\ that $\gamma$
is supported in an open set $U \subset M$ over which $\fK$ (and thus $\mathbb{V}$)
can be trivialized.
Using this trivialization, 
we write $\gamma = \sum_{j} f_j \mathbf{d}g_j \otimes \kappa (X_j,Y_j)$.
By polarization, we may assume that $X_j = Y_j$, so that we find 
$f\otimes X$ and $g\otimes X$ with $\lambda(\kappa (f\otimes X, \nabla g \otimes X)) 
\neq 0$.
As $\pi$ defines a bounded unitary representation of the $2$-step nilpotent 
Lie algebra spanned by $f\otimes X$, $g\otimes X$ and $c$, we must have  
$\lambda(\kappa (f\otimes X, \nabla g \otimes X))~=~0$, contradicting 
our hypothesis. 
This means that $\omega$ is a coboundary, and $\pi$ lifts to a 
bounded unitary representation of $\Gamma_{c}(\fK)$. 
\end{prf}

By Remark \ref{rem:4.2}, we may assume w.l.o.g.\ that  
$\fk$ is compact semisimple, which renders Theorems \ref{thm:d.24} and \ref{thm:4.12}
applicable.

\section{Boundary conditions and non-unital algebras}  
\mlabel{sec:6}

In this final section we discuss 
Lie algebras of the form 
$\fk_\cA= \fk \otimes_\R \cA_\R$ for $\cA$ non-unital,
and Lie algebras of sections of $\fK$ that satisfy vanishing conditions  
at the boundary of $X$. 

First we show that, for $\cA = \ell^1(\N,\C)$  
(with the pointwise product), the 
Lie algebra $\fk_\cA$ has infinite-dimensional bounded irreducible representations 
(Subsection~\ref{subsec:6.1}). 
We aready know from Section~\ref{sec:4} that this is the case for 
$\cA = C^\infty_c(X,\R)$, where $X$ is a non-compact manifold, but the case 
of $\ell^1(\N,\C)$ shows that this also happens for Banach algebras. 
In view of this observation, it is remarkable that this 
phenomenon does not occur for non-unital $C^*$-algebras, as we show in 
Subsection~\ref{subsec:6.2}. 
Given these two classes of examples, one expects that, for a 
Banach algebra $\cA$ with $\ell^1(\N,\C) \subeq \cA \subeq c_0(\N,\C)$, 
a mixture of the ``tame'' behavior of $c_0(\N,\C)$, where all 
irreducible representations are finite-dimensional, and the ``wild'' behavior 
for $\ell^1(\N,\C)$ will occur. 

This issue is addressed on a quantitative level in Subsection~\ref{subsec:6.3}, 
where we discuss the Banach--Lie algebra 
$\Gamma^k_0(\fK)$ of $C^k$-sections of $\fK$ 
whose $k$-jet vanishes at the boundary $\partial X$ of $X$. This is a 
Banach completion of $\Gamma_c(X^\circ)$ and we characterize those 
pairs $(\bx, \rho)$ for which the homomorphism 
$\eta_{\bx,\rho} \:  \Gamma_c(\fK) \to \cA_{\bx,\rho}$ extends continuously 
to $\Gamma^k_0(\fK)$. 

\subsection{Bounded representations of \texorpdfstring{$\ell^1(\N, \fk)$}{l1(N,k)}} 
\mlabel{subsec:6.1}

The simplest non-compact manifold is $X = \N$. In this case 
\[ C^\infty_c(X,\fk) \cong \fk^{(\N)}
:= \indlim \fk^N := \{ X = (X_n) \in \fk^\N \: 
|\{n \in \N \: X_n \not=0\}| < \infty\}.\] 
From Theorem~\ref{thm:4.12} it follows that $\fk^{(\N)}$ 
has a wild bounded unitary representation theory. 
That this is not a phenomenon caused by the rather fine topology 
on this Lie algebra, follows from the fact that it is shared by 
certain Banach completions. 
Let $\rho := (\rho_n, V_n)_{n\in\N}$ be a sequence of irreducible unitary 
representations of $\fk$ and let
$\cA_\rho := \hat\otimes_{n \in \N} B(V_n)$  denote the 
corresponding UHF $C^*$-algebra. We further assume that 
$\sup \|\rho_n\| < \infty$. Then the inclusion
 $\fk^{(\N)} \to \cA_\rho$ extends to a continuous embedding 
\begin{equation}
  \label{eq:inftens}
 \eta_\rho \: \g := \ell^1(\N,\fk) 
\to \cA_\rho, \quad 
(X_n)_{n \in \N} \mapsto 
\sum_{n = 1}^\infty \1^{\otimes (n -1)} \otimes \rho_n(X_n) \otimes \1^{\otimes\infty} 
\end{equation}
with $\|\eta_\rho\| \leq  \sup_{n \in \N} \|\rho_n\|.$ 
Since $\eta_\rho$ maps $\fk^{(\N)}$ to a topologically generating 
subalgebra,  $\im(\eta_\rho)$ generates a dense subalgebra. 
Therefore the Banach--Lie algebra 
$\g$ has bounded factor representations of type II and III. 

\begin{ex} \mlabel{ex:6.7} (cf.\ \cite[pp.~205~et~seqq.]{Sa71}) 
For $\fk = \su_2(\C)$ and the defining representation  
$\rho_n \: \fk \to M_2(\C)$, we consider the algebra $\cA := \cA_\rho$, i.e., 
$\cA = \hat\otimes_{n \in \N} M_2(\C)$. 
Let $0 \leq p_n \leq \shalf$ 
and consider the factorial state 
\[ \phi_n\pmat{a & b \\ c & d} := p_n a + (1-p_n) d \] 
on $M_2(\C)$. Then $\psi := \otimes_{n \in \N} \phi_n$ is a factorial 
state on $\cA$. If $p_n = 0$ for every $n$, then $\psi$ is pure, so that 
we obtain a type I$_\infty$ representation. If 
$p_n = \shalf$ for every $n$, then 
$\pi_\psi(\cA)''$ is a type II$_1$-factor. If there exists a 
$\delta > 0$ with 
$\delta < p_n < \shalf - \delta$ for every $n$, then 
$\pi_\psi(\cA)''$ is a type III factor. 
For $p_n = \lambda \in ]0,\shalf[$, $n \in \N$, the factor 
$\cM_\lambda := \pi_\psi(\cA)''$ is called a {\it Powers factor} 
(cf. \cite{Po67}). 
\end{ex}

\begin{remark}
It is easy to see that the map $\eta_\rho$ actually is an isometric 
embedding $\ell^1(\N,\fk) \into \cA$. Let $\beta \:  \cA \to B(\cH)$ 
be a factor representation of $\cA$. Since $\cA$ is simple, 
$\beta$ is isometric, and this implies for every sequence 
$(X_n) \in \fk^\N$ that 
\[\lim_{N \to \infty} \|\beta \circ \eta_\rho(X_1,X_2, \ldots, X_N,0,\cdots)\| 
= \sum_n \|X_n\|.\] 
In particular, the representation $\pi := \beta \circ \eta_\rho$ does 
not extend to 
a bounded representation of 
$\ell^2(\N,\fk)$, or any other  
Banach--Lie algebra containing $\ell^1(\N,\fk)$ as a proper dense subspace. 

As the spectra of elements in $\ell^1(\N,\fk)$ are symmetric, it follows 
that the map 
\[ \oline\eta_\rho \: \ell^1(\N,\fk) \to \der(\cA), \quad 
X \mapsto \ad(\eta_\rho(X)) \] 
is isometric. Therefore, even the bounded \emph{projective} 
representation $\oline\pi \: \ell^1(\N,\fk) \to \pu(\cH)$ 
does not extend to any Banach--Lie algebra containing 
$\ell^1(\N,\fk)$ as a proper dense subspace. 

However, one can show that for certain $\beta$, $\pi$ has a unique proper extension 
to a projective unitary representation of $\ell^2(\N,\fk)$ by unbounded 
operators and that this representation integrates to an analytic representation 
of a non-trivial central $\T$-extension of the corresponding Banach--Lie group 
\[ \ell^2(\N,\SU_2(\C)) :=\Big \{ k \in \SU_2(\C)^\N \: 
\sum_{n = 1}^\infty \|\1 - k_n\|^2 < \infty \Big\}. \] 
We shall explore this and related phenomena in subsequent work 
(cf.\ \cite{JN13}). 
\end{remark}

\begin{ex}\mlabel{ex:6.3}
Note that any Lie algebra that has a dense continuous homomorphism
into $\ell^1(\N,\fk)$ will inherit the ``wild''
factor representations mentioned in Example~\ref{ex:6.7}.
For example, the Lie algebra
$\cS(\R^d, \fk)$ of $\fk$-valued Schwartz functions
allows for the dense homomorphism 
$\cS(\R^d,\fk) \to \ell^1(\N,\fk)$ defined by  
$f \mapsto (f(ne_1))_{n \in \N}$.
\end{ex}

\begin{ex}\mlabel{ex:6.4}
Consider
the Lie algebra
$\fk_{\cA} := \fk\otimes_\R \cA_\R$, where 
$\cA := C^k_0([0,1],\C)$ is the commutative
Banach algebra
\[ C^k_0([0,1],\C) := \left\{ f \in C^k([0,1],\C)\: f^{(j)}(0) = 0, \ 
\forall \,\, j \in \{0, \ldots, k\}
\right\}\,.\] 
If $k\geq 1$, then we have
a dense continuous Lie algebra homomorphism into $\ell^1(\N,\fk)$, derived from the 
Banach algebra homomorphism
$\eta \: \cA \to \ell^1(\N,\C)$ defined by 
$\eta(f)_n = f(x_n)$, with $x_n := \frac{1}{(n+1)^2}$.
It is continuous 
because 
$|f(x)| \leq \frac{1}{k!} x^k \|f^{(k)}\|_{\infty}$,
and $1/(n+1)^{2k}$ is summable for $k\geq 1$.
Consequently, $\fK_{\cA}$ has  bounded unitary factor representations of type II
and III if $k\geq 1$.
However, we
will see in Section \ref{subsec:6.2} that, for $k=0$, all 
irreducible bounded unitary representations are finite-dimensional,
so that in this case, the bounded unitary factor representations are all of type I.
\end{ex}

In the following, we will perform a more refined analysis of Example \ref{ex:6.4}. 
In Subsection \ref{subsec:6.2} below, we will show that if $\cA := C_0(X,\C)$ is a 
(possibly non-unital) $C^*$-algebra, then every irreducible bounded unitary representation
of $\fk_{\cA}$ is a finite tensor product of evaluation representations.
Note that the Banach algebra $C^k_0([0,1],\fk)$ is a $C^*$-algebra
only if $k=0$. This is what causes the ``tame'' behavior that distinguishes it 
from its siblings with $k\geq 1$.
In Subsection \ref{subsec:6.3} we will take up the thread for $k\geq 1$.
We will show, again in a more general context, that an infinite evaluation 
representation in a sequence $(x_n)$ is defined if and only if $x_n^k$ is summable.

\subsection{Non-unital \texorpdfstring{$C^*$}{C*}-algebras} 
\mlabel{subsec:6.2}

Let $\cA \cong C_0(X)$ be the $C^*$-algebra of continuous functions that vanish at infinity,
for $X$ a locally compact space which is not compact. 
The dual $\cA_\R'$ of the Banach space $\cA_\R \cong C_0(X,\R)$ 
can be identified with the space $\cM(X)$ of finite regular Borel measures on $X$. 
Let $B \subeq \cA_\R'$ be a weak-$*$-compact subset. Then we have 
a natural map $\cA_\R \to C(B,\R), a \mapsto a^*$ with 
$a^*(\alpha) := \alpha(a)$. 

\begin{lem} \mlabel{lem:c.10} 
If $(\delta_n)_{n \in \cA}$ is an approximate identity in 
$\cA$ with $0 \leq \delta_n \leq 1$, then $\delta_n^* \to 1^*$ holds pointwise on $\cA'$. 
\end{lem}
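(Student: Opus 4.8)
The plan is to use the identification $\cA_\R' \cong \cM(X)$ recalled above, under which a functional $\alpha \in \cA'$ acts as $\alpha(a) = \int_X a\, d\mu$ for the corresponding finite regular Borel measure $\mu$, and its canonical extension to the unitization $\cA_+$ satisfies $\1^*(\alpha) = \alpha(\1) = \int_X \1\, d\mu = \mu(X)$ (here $\1$ is read as the constant function in $\cA_+$). With this dictionary, the assertion $\delta_n^* \to \1^*$ pointwise on $\cA'$ is precisely the statement that $\int_X \delta_n\, d\mu \to \mu(X)$ for every $\mu \in \cM(X)$. I would therefore fix such a $\mu$ and estimate $\big|\int_X (\1 - \delta_n)\, d\mu\big|$.

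The only inputs needed are the defining property of an approximate identity, the bound $0 \le \delta_n \le 1$, and the regularity of $\mu$. Given $\eps > 0$, finiteness of the total variation $|\mu|$ together with regularity supplies a compact set $K \subeq X$ with $|\mu|(X \setminus K) < \eps$, and Urysohn's lemma for locally compact spaces yields $\chi \in C_c(X) \subeq \cA$ with $0 \le \chi \le 1$ and $\chi\res_K = 1$. I would then split
\[ \int_X (\1 - \delta_n)\, d\mu
= \int_X (\chi - \delta_n\chi)\, d\mu + \int_X (\1 - \delta_n)(\1 - \chi)\, d\mu. \]
The first term is bounded by $\|\chi - \delta_n\chi\|_\infty\, |\mu|(X)$, which tends to $0$ since $\chi \in \cA$ and $(\delta_n)$ is an approximate identity. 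For the second term, $0 \le \delta_n \le 1$ gives $|\1 - \delta_n| \le 1$, while $\1 - \chi$ is bounded by $1$ and vanishes on $K$, so the integral is at most $|\mu|(X \setminus K) < \eps$. Hence $\limsup_n \big|\int_X(\1 - \delta_n)\,d\mu\big| \le \eps$, and letting $\eps \to 0$ gives the claimed convergence.

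The one genuinely delicate point is that an approximate identity is in general a net rather than a sequence, so I would deliberately avoid the dominated convergence theorem (a sequential statement) and rely on the splitting above, which only invokes the net property $\|\delta_n \chi - \chi\| \to 0$ for the single fixed element $\chi$. Accordingly I expect the main obstacle to be not analytic but organizational: pinning down the identification $\cA' \cong \cM(X)$ and the interpretation $\1^*(\alpha) = \mu(X)$ with $\1$ living in $\cA_+$, after which the estimate is routine. An equivalent route would first establish that $0 \le \delta_n \le 1$ forces $\delta_n \to \1$ uniformly on compact subsets of $X$ (via the same Urysohn argument, using $\|a - \delta_n a\|_\infty \to 0$ for a cutoff $a$ equal to $1$ on $K$), and then combine uniform convergence on $K$ with $|\mu|(X \setminus K) < \eps$; I would present the direct splitting since it is marginally shorter and manifestly valid for nets.
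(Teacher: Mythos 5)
Your proof is correct and follows essentially the same route as the paper's: identify $\cA' \cong \cM(X)$, use regularity to find a compact $K$ carrying all but $\eps$ of the mass, take a Urysohn cutoff $\chi$ equal to $1$ on $K$, apply the approximate identity property to the single element $\chi$, and use $0 \le \delta_n \le 1$ to control the contribution outside $K$. The only difference is cosmetic: the paper first reduces to positive $\mu$ via the Jordan decomposition and then splits the integral over $K$ and $X \setminus K$, whereas you work directly with the total variation $|\mu|$ through the algebraic splitting $\1 - \delta_n = (\chi - \delta_n\chi) + (\1 - \delta_n)(\1 - \chi)$, which avoids that reduction but rests on the same estimates.
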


\begin{prf} If $\mu \in \cA' \cong \cM(X)$ is a complex regular measure on 
$X$, then we have to show that 
\[ \int_X \delta_n \, d\mu \to \int_X 1 \, d\mu = \mu(X).\] 
Since $\mu$ is a linear combination of four positive measures 
(\cite[Thm.~6.14]{Ru86}), we may 
w.l.o.g.\ assume that $\mu$ is positive. 
Let $\eps > 0$ and pick a compact subset $K \subeq X$ with 
$\mu(X\setminus K) \leq \eps$. 
There exists a function 
$\chi \in C_0(X)$ with $\chi\res_K = 1$. Then 
$\delta_n \chi \to \chi$ implies that $\delta_n\res_K$ converges uniformly 
to $1$, so that 
$\int_K \delta_n\, d\mu \to \mu(K).$ 
Since 
\[ 0 \leq\int_{X\setminus K} \delta_n\, d\mu \leq 
\mu(X\setminus K) \leq \eps,\] 
it follows that, eventually, 
$\big|\int_X \delta_n\, d\mu - \mu(X)\big| \leq 2 \eps.$ 
\end{prf}

\begin{rem} A sequence $(\delta_n)_{n \in \N}$ with the above properties 
exists if and only if $X$ is countable at infinity. 
In fact, if $X$ is countable at infinity, then there exists an 
exhaustion $(K_n)_{n \in \N}$ by compact subsets satisfying 
$K_n \subeq K_{n+1}^0$. Then Urysohn's Lemma implies the existence of 
$\delta_n \in C_0(X)$ with $\delta_n\res_{K_n} = 1$ and 
$0 \leq \delta_n \leq 1$. Now $(\delta_n)$ is an approximate 
identity of $C_0(X)$. 

If, conversely, $(\delta_n)$ exists, then we consider the 
compact subsets 
$K_n := \big\{ \delta_n \geq \frac{1}{n}\big\} \subeq X.$ 
Since $\delta_n \to 1$ holds uniformly on every compact subset $K \subeq X$, 
there exists an $n \in \N$ with $K \subeq K_n$. In particular, 
$X = \bigcup_n K_n$ and $X$ is countable at infinity. 
\end{rem}

\begin{lem} Let $B \subeq \cA_\R'$ be a weak-$*$-compact subset
with Borel $\sigma$-algebra $\fB(B)$, and 
let $(\delta_n)$ be an approximate identity of $\cA$ with 
$0 \leq \delta_n \leq 1$. For every spectral measure 
$P \: \fB(B) \to B(\cH)$, we then have 
\[ P(1^*) = \slim_{n \to \infty} P(\delta_n^*).\] 
\end{lem}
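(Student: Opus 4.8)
The plan is to reduce the strong-operator convergence to an elementary dominated-convergence argument for the scalar (positive) Borel measures $\mu_\xi(E) := \la P(E)\xi, \xi\ra$ attached to $P$, exploiting the standard $L^2$-isometry of the spectral calculus. First I would check that $P$ can legitimately be applied to both functions: each $\delta_n \in \cA_\R$, so $\delta_n^* \in C(B,\R)$ is in particular $\fB(B)$-measurable, and by Lemma~\ref{lem:c.10} the net $\delta_n^*$ converges pointwise on $B$ to $1^*$, so $1^*$ is a (Baire class one, hence Borel) function on $B$. The key preliminary estimate is uniform boundedness: since $B \subeq \cA_\R'$ is weak-$*$-compact it is norm bounded, say $\|\mu\| \leq M$ for all $\mu \in B$ (a weak-$*$-compact subset of the dual of a Banach space is norm bounded by the uniform boundedness principle). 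Hence $|\delta_n^*(\mu)| = |\mu(\delta_n)| \leq M \|\delta_n\|_\infty \leq M$ for all $n$, and passing to the pointwise limit also $|1^*(\mu)| \leq M$. In particular $1^*$ is a \emph{bounded} Borel function and $P(1^*)$ is well defined.

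Next I would invoke the multiplicativity and $*$-compatibility of the spectral integral, $P(g)P(h) = P(gh)$ and $P(g)^* = P(\oline g)$. Writing $g_n := \delta_n^* - 1^*$, which is a real-valued bounded Borel function, one gets $P(g_n)^* P(g_n) = P(g_n^2)$, and therefore for every $\xi \in \cH$
\[ \| P(\delta_n^*)\xi - P(1^*)\xi \|^2 = \la P(g_n^2)\xi, \xi\ra = \int_B |g_n|^2 \, d\mu_\xi, \]
where $\mu_\xi$ is a finite positive Borel measure on $B$ of total mass $\|\xi\|^2$.

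Finally, on $B$ the integrand satisfies $|g_n|^2 \to 0$ pointwise (by Lemma~\ref{lem:c.10}) and is dominated by the $\mu_\xi$-integrable constant $(2M)^2$. Lebesgue's dominated convergence theorem then yields $\int_B |g_n|^2 \, d\mu_\xi \to 0$, i.e.\ $P(\delta_n^*)\xi \to P(1^*)\xi$ for every $\xi \in \cH$, which is exactly the claimed identity $P(1^*) = \slim_{n \to \infty} P(\delta_n^*)$. The only genuinely delicate points are the uniform norm bound on $B$ — for which weak-$*$-compactness is precisely what is needed — and the observation that $1^*$ is in general only Borel rather than continuous, since the constant function $1$ does not belong to $C_0(X)$; this is exactly why one must argue through the scalar measures $\mu_\xi$ and dominated convergence instead of hoping for uniform convergence. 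Everything else is the routine $L^2$-isometry of the spectral calculus.
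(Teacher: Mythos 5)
Your proof is correct and follows essentially the same route as the paper: weak-$*$-compactness of $B$ gives a uniform bound on $(\delta_n^*)$ via the Uniform Boundedness Principle, Lemma~\ref{lem:c.10} gives the pointwise convergence $\delta_n^* \to 1^*$, and the conclusion follows from the continuity properties of spectral measures. The only difference is that you unpack the ``standard continuity properties'' which the paper merely cites, proving them explicitly through the $L^2$-isometry of the spectral calculus and dominated convergence applied to the scalar measures $\mu_\xi$.
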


\begin{prf} Since $B$ is weak-$*$-compact, it is weak-$*$-bounded, 
hence bounded by the Uniform Boundedness Principle. Therefore the 
sequence $(\delta_n^*)$ is uniformly bounded. Since 
$\delta_n^* \to 1^*$ pointwise by Lemma~\ref{lem:c.10}, the assertion follows from the 
standard continuity properties of spectral measures. 
\end{prf}

\begin{prop} \mlabel{prop:c.13} 
Let $\fk$ be a compact Lie algebra and 
$\fk_\cA := C_0(X,\fk)$ for a locally compact space $X$ countable at infinity. 
Then any bounded unitary representation 
$\pi \: \fk_\cA \to \fu(\cH)$ extends to the Banach--Lie algebra 
$\fk_{\cA_+} \cong \fk_\cA \rtimes \fk$ by 
\[ \hat\pi(x) := \slim_{n \to \infty} \pi(x \otimes \delta_n).\] 
\end{prop}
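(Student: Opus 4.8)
The plan is to treat the two ingredients of the statement separately: first, that the strong limit defining $\hat\pi(x)$ exists for each $x\in\fk$; second, that the resulting map $\hat\pi$, which agrees with $\pi$ on the ideal $\fk_\cA$, is a Lie algebra homomorphism of $\fk_{\cA_+}\cong\fk_\cA\rtimes\fk$ into $\fu(\cH)$. Throughout I use that $\fk\otimes\1$ (the constant sections) is a copy of $\fk$ acting on $\fk_\cA$ by the pointwise adjoint action, and that $x\otimes\delta_n\in\fk_\cA$ while $x\otimes\1\notin\fk_\cA$ (as $\1\notin C_0(X)$), which is exactly why a limit is needed.

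For existence, fix $x\in\fk$. Since $[x\otimes a,x\otimes b]=[x,x]\otimes ab=0$, the operators $\{i\pi(x\otimes a)\:a\in\cA_\R\}$ form a commuting family of bounded self-adjoint operators, and $a\mapsto i\pi(x\otimes a)$ is $\R$-linear and bounded with norm at most $\|\pi\|\,\|x\|$. I would pass to the commutative $C^*$-algebra they generate and use the spectral theorem to realize them as integrals against a spectral measure: identifying the Gelfand spectrum of this algebra with a weak-$*$-compact subset $B\subeq\cA_\R'$ via $\omega\mapsto(a\mapsto\omega(i\pi(x\otimes a)))$, one obtains a spectral measure $P\:\fB(B)\to B(\cH)$ with $i\pi(x\otimes a)=P(a^*)$ for all $a\in\cA_\R$, where $a^*(\alpha)=\alpha(a)$. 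The preceding lemma (whose hypotheses are precisely an approximate identity $(\delta_n)$ with $0\le\delta_n\le1$, which exists because $X$ is countable at infinity) then gives $P(\delta_n^*)\to P(1^*)$ in the strong operator topology, i.e.\ $i\pi(x\otimes\delta_n)\to P(1^*)$. Hence the strong limit exists and $\hat\pi(x)=-iP(1^*)$. Since $P(1^*)$ depends only on $\pi$ and not on $(\delta_n)$, the map $\hat\pi$ is well defined, skew-hermitian as a strong limit of uniformly bounded skew-hermitian operators, and bounded with $\|\hat\pi(x)\|\le\|\pi\|\,\|x\|$.

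For the homomorphism property I would extend $\hat\pi$ to all of $\fk_{\cA_+}$ by $\hat\pi(x+s):=\hat\pi(x)+\pi(s)$ for $x\in\fk$, $s\in\fk_\cA$; linearity on $\fk$ is immediate since strong limits are additive. By bilinearity it suffices to check the bracket on three types of pairs. On $\fk_\cA\times\fk_\cA$ it holds because $\pi$ is already a representation. The key case is $[x\otimes\1,t]$ with $t\in\fk_\cA$, whose bracket is the section $\xi\mapsto[x,t(\xi)]$, which I abbreviate $[x,t]\in\fk_\cA$. Here $[\pi(x\otimes\delta_n),\pi(t)]=\pi(\delta_n\cdot[x,t])$, and letting $n\to\infty$ yields $[\hat\pi(x),\pi(t)]=\pi([x,t])$, using that commutators are strongly continuous when one slot is a fixed bounded operator and the other a uniformly bounded strongly convergent net, together with the sup-norm convergence $\delta_n\cdot[x,t]\to[x,t]$ in $\fk_\cA$ (valid because $[x,t]\in C_0$, $0\le\delta_n\le1$, and $\delta_n\to1$ uniformly on compacta, cf.\ Lemma~\ref{lem:c.10}). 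The remaining case $[x\otimes\1,y\otimes\1]=[x,y]\otimes\1$ then follows by approximating the second argument: $[\hat\pi(x),\hat\pi(y)]=\slim_n[\hat\pi(x),\pi(y\otimes\delta_n)]=\slim_n\pi([x,y]\otimes\delta_n)=\hat\pi([x,y])$, where the middle equality is precisely the key case applied to $t=y\otimes\delta_n$.

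The main obstacle is the existence of the strong limit, i.e.\ organizing the commuting self-adjoint family $\{i\pi(x\otimes a)\}$ into a genuine spectral measure on a weak-$*$-compact $B\subeq\cA_\R'$ with $i\pi(x\otimes a)=P(a^*)$, so that the preceding lemma applies; this is also exactly the point at which the hypothesis that $X$ be countable at infinity enters. Once existence is secured, the algebraic verification is routine, the only analytic inputs being the sup-norm convergence $\delta_n\cdot[x,t]\to[x,t]$ and the strong continuity of commutators against uniformly bounded strongly convergent nets.
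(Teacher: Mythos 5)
Your proof is correct and follows essentially the same route as the paper's: realize each commuting family $\{\pi(x\otimes a)\colon a\in\cA_\R\}$ via a spectral measure on a weak-$*$-compact subset of $\cA_\R'$, apply the preceding lemmas on the approximate identity to get strong convergence of $\pi(x\otimes\delta_n)$, and verify the bracket relations using separate strong continuity of the commutator on bounded sets. The only cosmetic difference is that where the paper invokes \cite[Thm.~4.1]{Ne09} for the existence of the spectral measure, you construct it directly from the Gelfand transform of the commutative $C^*$-algebra generated by the operators $i\pi(x\otimes a)$, which amounts to the same thing.
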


\begin{prf} For each $x \in \fk$, the map 
\[ \pi_x \: \cA_\R \to \fu(\cH), \quad a \mapsto \pi(x \otimes a) \] 
is a bounded representation of the abelian Lie algebra $\cA_\R$. 
Hence there exists a spectral measure 
$P_x$ on a weak-$*$-compact subset $B_x \subeq \cA_\R'$ such that 
\[ \pi_x(a) = i P_x(a) = i \int_{B_x} a \, dP_x \] 
(\cite[Thm.~4.1]{Ne09}). 
We now put 
\[ \hat \pi(x) := i P_x(1^*) = \slim_{n \to \infty} i P_x(\delta_n).\] 
Since the commutator bracket is separately strongly continuous on bounded subsets 
of $B(\cH)$, we obtain 
\begin{align*}
[\hat\pi(x), \pi(y \otimes a)] 
&= \slim_{n \to \infty} [\pi(x \otimes \delta_n), \pi(y \otimes a)] \\
&= \slim_{n \to \infty} \pi([x,y] \otimes \delta_na) 
= \pi([x,y] \otimes a)
\end{align*}
and 
\begin{align*}
[\hat\pi(x), \hat\pi(y)] 
&= \slim_{n \to \infty} [\hat\pi(x), \pi(y \otimes \delta_n)] 
= \slim_{n \to \infty} \pi([x,y] \otimes \delta_n) 
= \hat\pi([x,y]).
\end{align*}
Therefore 
\[ \hat\pi \: \fk_{\cA_+} = \fk_\cA \rtimes \fk \to B(\cH), \quad 
(x \otimes a,y) \mapsto \pi(x \otimes a) + \hat\pi(y) \] 
is a representation of the Banach--Lie algebra $\fk_{\cA_+}$. 
Since $\fk$ is finite-dimensional, it is also continuous. 
\end{prf}

Since every bounded unitary representation of $\fk_{\cA}$ extends to
$\fk_{\cA_+}$,
the classification of the 
irreducible bounded unitary representations of $\fk_{\cA_+}$ 
(Theorem~\ref{thm:6.10}) yields 
immediately the corresponding classification for $\fk_\cA$ 
(cf.\ \cite{NS11} for the case of unital commutative $C^*$-algebras). 

\begin{thm} \mlabel{thm:c.14} 
 Let $\fk_\cA = \fk \otimes_{\R} C_0(X,\C)_{\R}$, with $\fk$ a compact semisimple Lie algebra, and 
 $\cA = C_0(X,\C)$ the commutative $C^*$-algebra of continuous functions 
 vanishing at infinity for a locally compact space $X$ countable at infinity. 
Then every bounded irreducible unitary representation $(\pi, \cH)$ 
of $\fk_\cA$ is a finite tensor product of evaluation representations at different points 
corresponding to irreducible representations of $\fk$. 
In particular, $\cH$ is finite-dimensional. 
\end{thm}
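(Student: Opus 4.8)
The plan is to reduce the non-unital situation to the already established unital classification by passing to the minimal unitization. First I would invoke Proposition~\ref{prop:c.13}: since $X$ is countable at infinity, $\cA = C_0(X)$ admits an approximate identity $(\delta_n)$ with $0 \le \delta_n \le 1$, and the prescription $\hat\pi(x) := \slim_{n\to\infty}\pi(x\otimes\delta_n)$ extends $\pi$ to a bounded unitary representation $\hat\pi$ of the Banach--Lie algebra $\fk_{\cA_+} \cong \fk_\cA \rtimes \fk = \fk\otimes_\R(\cA_+)_\R$. The structural observation that makes the earlier machinery applicable is that the minimal unitization $\cA_+ = \C\oplus C_0(X)$ is canonically isomorphic to $C(X_\omega)$, the algebra of continuous functions on the one-point compactification $X_\omega = X\cup\{\infty\}$ (send $g$ to $(g(\infty),\, (g-g(\infty))\res_X)$, using that $g-g(\infty)$ vanishes at $\infty$). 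This $C(X_\omega)$ is a unital commutative involutive and in fact hermitian continuous inverse algebra, so all of its characters are involutive and its character set $\Gamma_{\cA_+}^*$ is homeomorphic to $X_\omega$.

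Next I would verify that irreducibility passes to $\hat\pi$, which is immediate in the direction needed. Since $\fk_\cA\subseteq\fk_{\cA_+}$ we have $\pi(\fk_\cA)\subseteq\hat\pi(\fk_{\cA_+})$, whence $\hat\pi(\fk_{\cA_+})'\subseteq\pi(\fk_\cA)'=\C\1$ when $\pi$ is irreducible; thus $\hat\pi(\fk_{\cA_+})'=\C\1$ and $\hat\pi$ is irreducible. Now $\cA_+$ is a commutative involutive unital cia, so the classification of bounded irreducible representations of $\fk_{\cA_+}$ (Theorem~\ref{thm:6.10}, equivalently the Lie-algebra statement Theorem~\ref{thm:tenspro} applied to the complexification, transferred via the correspondence with the $1$-connected group) applies: $\hat\pi$ is unitarily equivalent to a finite tensor product $\bigotimes_{\chi\in\bx}\rho_\chi\circ\ev_\chi$ of irreducible evaluation representations, indexed by a finite set $\bx\subseteq\Gamma_{\cA_+}^*$ of involutive characters, i.e.\ by finitely many distinct points of $X_\omega$.

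It remains to restrict $\hat\pi$ back to $\fk_\cA$ and dispose of the point at infinity, which I expect to be the only genuinely delicate step. For a point $x\in X$ the factor $\rho_x\circ\ev_x$ restricts to an honest evaluation representation of $\fk_\cA=C_0(X,\fk)$. The character $\chi_\infty$ corresponding to $\infty$ annihilates $\cA$, so $\ev_\infty$ vanishes on $\fk_\cA$ and the factor $\rho_\infty\circ\ev_\infty$ restricts to the trivial action on $V_{\rho_\infty}$; were $\dim V_{\rho_\infty}>1$, this would exhibit $\pi$ as a nontrivial multiple of the remaining tensor product, contradicting the irreducibility of $\pi$. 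Hence $\dim V_{\rho_\infty}=1$, and since $\fk$ is semisimple $\rho_\infty$ is trivial and contributes nothing. Therefore $\pi\simeq\bigotimes_{x\in\bx\cap X}\rho_x\circ\ev_x$ is a finite tensor product of evaluation representations at distinct points of $X$, and $\cH$ is finite-dimensional since each $\rho_x$ is. The remaining verifications (that $\cA_+\cong C(X_\omega)$ is a cia with character space $X_\omega$, and that $\hat\pi$ is a Lie algebra homomorphism) are routine or already supplied by Proposition~\ref{prop:c.13}, so the substance of the argument is precisely this bookkeeping reduction to the unital case together with the infinity-factor analysis.
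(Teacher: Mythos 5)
Your proposal is correct and follows exactly the paper's route: the paper likewise extends $\pi$ to $\fk_{\cA_+}\cong \fk_\cA\rtimes\fk$ via Proposition~\ref{prop:c.13} and then invokes the unital classification (Theorem~\ref{thm:6.10}), merely stating that this ``yields immediately'' the result. Your write-up supplies the details the paper leaves implicit --- the identification $\cA_+\cong C(X_\omega)$, the passage of irreducibility to the extension, and the elimination of the factor at $\infty$ --- all of which are carried out correctly.
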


\subsection{Boundary conditions} 
\mlabel{subsec:6.3}

Let $X$ be a compact manifold with boundary $\partial X$ and let $\fK\rightarrow X$ be a Lie algebra bundle
 whose typical fiber is a compact semisimple Lie algebra $\fk$.
We then denote by $\Gamma^{k}(\fK)$  the Lie algebra of $C^k$-sections of $\fK$,
and by 
\[
\Gamma^k_0(\fK) := \{s \in \Gamma^k(\fK) \,;\, j^k (s)|_{\partial X} = 0 \} 
\]
the subalgebra of sections 
whose derivatives
of order $\leq k$ vanish at the boundary.
The negative of the Killing form 
yields a smoothly varying inner product 
$\kappa \colon \fK_x \times \fK_x \rightarrow \mathbb{R}$ on the fibers. 
If we choose a metric $g$ on $X$ and a Lie connection $\nabla$ on $\fK$, both nondegenerate at 
$\partial X$, then $\nabla$ combines with the Levi-Civita connection to 
the covariant derivative
$\nabla^n \: \Gamma(\fK) \rightarrow \Gamma(\fK \otimes (T^{*}X)^{\otimes n})$.
The scalar product $\kappa \otimes {g^{*}_{x}}^{\otimes n}$
induces the norm $\|\,\cdot\, \|_{g,x}$ on
$\fK_{x} \otimes (T^*_{x}X)^{\otimes k}$,
and we obtain 
the $C^{k}$-norm 
\[
 \|s\|_{k} := \sup \left\{ {\textstyle \sum_{j=0}^{k}} \|\nabla^j s(x) \|_{g,x}\,;\, x\in X \right\}
\]
on $\Gamma^{k}(\fK)$. This norm (multiplied by a suitable constant  
to guarantee $\|[s,s']\|_{k} \leq \|s\|_{k}\|s'\|_{k}$)
makes $\Gamma^k(\fK)$ into a Banach Lie algebra 
with
$\Gamma^{k}_0(\fK)$ as a closed ideal.
Note that different choices of $g$ and $\nabla$ yield equivalent norms.


Since the inclusion $\Gamma(\fK) \hookrightarrow \Gamma^{k}(\fK)$
is continuous and dense, every irreducible bounded unitary representation 
of $\Gamma^{k}(\fK)$ restricts to an irreducible bounded unitary representation 
of $\Gamma(\fK)$, by which it is uniquely determined.
By Theorem \ref{thm:d.24}, these are finite tensor products of irreducible
evaluation representations, hence extend to $\Gamma^{k}(\fK)$.
The Lie algebras $\Gamma(\fK)$
and $\Gamma^{k}(\fK)$ of smooth and $C^k$ sections thus have the same 
bounded irreducible representations, given by \ref{thm:d.24}.
The following theorem generalizes this observation to representations 
that need not be irreducible. 

\begin{thm} If $X$ is compact, then any bounded unitary representation of 
$\Gamma(\fK)$ extends to the Banach--Lie algebra 
$\Gamma^0(\fK)$ of continuous sections of $\fK$ and even to the 
Banach--Lie algebra $\Gamma^b(\fK)$ of all bounded sections. 
\end{thm}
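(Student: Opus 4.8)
The plan is to reduce the whole statement to a single uniform estimate: that the seminorm $q(s):=\|\pi(s)\|$ on $\Gamma(\fK)$ is continuous for the $C^0$-norm $\|s\|_\infty:=\sup_{x\in X}\|s(x)\|$ associated with a fixed invariant fibre metric. Granting $q(s)\le C\|s\|_\infty$, the extension to $\Gamma^0(\fK)$ is immediate, since smooth sections are $\|\cdot\|_\infty$-dense in continuous ones, so $\pi$ admits a unique $\|\cdot\|_\infty$-continuous extension $\hat\pi\colon\Gamma^0(\fK)\to\fu(\cH)$, which is a homomorphism by continuity of the bracket. To obtain the estimate I would introduce the $C^*$-algebra $\cA:=C^*(\pi(\Gamma(\fK)))\subseteq B(\cH)$. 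Every irreducible representation $\beta$ of $\cA$ gives an irreducible bounded representation $\beta\circ\pi$ of $\Gamma(\fK)$, which by Theorem~\ref{thm:d.24} is a finite tensor product of evaluation representations: there is a finite set $\bx_\beta\subseteq X$ and irreducible representations $\rho^\beta_x$ of $\fK_x$ with $\beta(\pi(s))=\sum_{x\in\bx_\beta}\rho^\beta_x(s(x))$. Since the $C^*$-norm is the supremum over irreducible representations, $\|\pi(s)\|=\sup_\beta\|\beta(\pi(s))\|$, and it suffices to bound $\sum_{x\in\bx_\beta}\|\rho^\beta_x\|$ uniformly in $\beta$.

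The key step is this uniform bound, and the mechanism is the positivity of coroot weights. Using a finite trivialising cover $\{U_1,\dots,U_m\}$ with subordinate partition of unity $(\chi_i)$ and $q(s)\le\sum_i q(\chi_i s)$, I would reduce to sections $s$ supported in one trivialising chart $U$, where $\fK|_U\cong U\times\fk$ and only the points of $\bx_\beta\cap\supp(s)$ contribute to $\beta(\pi(s))$. Fix the Cartan data $(\ft,\Delta^+,\Pi)$ of $\fk$ and a bump function $\chi\ge 0$ with $\chi\equiv 1$ on $\supp(s)$ and $\supp(\chi)\subseteq U$. In the complex-linearly extended $*$-representation the element $\check\alpha\otimes\chi$ acts on the highest-weight vector of $\beta\circ\pi$ by the scalar $\sum_x\chi(x)\lambda^\beta_x(\check\alpha)$, where $\lambda^\beta_x(\check\alpha)\in\N_0$ because $\lambda^\beta_x$ is dominant integral and $\check\alpha$ is a simple coroot. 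As all terms are nonnegative there is no cancellation, so for each simple root $\alpha\in\Pi$
\[ \sum_{x\in\bx_\beta\cap\supp(s)}\lambda^\beta_x(\check\alpha)\le\|\pi(\check\alpha\otimes\chi)\|<\infty, \]
a bound independent of $\beta$ (the right-hand side is the norm of one fixed bounded operator). Summing over $\alpha\in\Pi$ and invoking the standard inequality $\|\rho_\lambda\|\le c(\fk)\sum_{\alpha\in\Pi}\lambda(\check\alpha)$ for irreducible representations of a compact semisimple Lie algebra, I obtain $\sum_{x\in\bx_\beta\cap\supp(s)}\|\rho^\beta_x\|\le C_U$, whence $\|\beta(\pi(s))\|\le\|s\|_\infty\sum_{x\in\bx_\beta\cap\supp(s)}\|\rho^\beta_x\|\le C_U\|s\|_\infty$. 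Taking the supremum over $\beta$ and summing the finitely many chart contributions yields $q(s)\le C\|s\|_\infty$.

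Finally, for $\Gamma^b(\fK)$ the difficulty is that continuous sections are \emph{not} $\|\cdot\|_\infty$-dense in bounded ones, so the density argument fails and one must build $\hat\pi(s)$ directly. The guiding principle is that $\pi$ sees only pointwise data: in every irreducible $\beta$ the prescription $\beta(\hat\pi(s)):=\sum_{x\in\bx_\beta}\rho^\beta_x(s(x))$ is well defined for any bounded section and obeys the same bound $\le C\|s\|_\infty$. To realise this field of operators as a single element of $\cM:=\pi(\Gamma(\fK))''$, I would pass to the direct-integral decomposition of $\cH$ over the (type~I) spectrum of $\cA$, as in the final theorem of Section~\ref{sec:4}, invoking \cite{Dix77} in the separable case and \cite{He82} in general; along it the data $(\bx_z,\rho^z_x)$ form a measurable field, so for a bounded Borel section $s$ the fibrewise operators $\sum_{x\in\bx_z}\rho^z_x(s(x))$ assemble to $\hat\pi(s)=\int^\oplus_z\big(\sum_{x\in\bx_z}\rho^z_x(s(x))\big)\,d\mu(z)\in\cM$, with $\|\hat\pi(s)\|\le C\|s\|_\infty$ and the homomorphism property holding fibrewise. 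I expect the main obstacle to be exactly this gluing: arranging enough measurability of the point sets and of $x\mapsto\rho^z_x(s(x))$ along the disintegration to define $\hat\pi(s)$ as a genuine bounded operator for non-continuous $s$, together with the cross-chart bookkeeping of the fibrewise Cartan choices in the uniform estimate. By contrast, the coroot positivity argument, which is the conceptual heart of the proof, is comparatively robust.
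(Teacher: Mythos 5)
Your argument for the $\Gamma^0(\fK)$ part --- reduction to the uniform estimate $\|\pi(s)\|\le C\|s\|_\infty$ via the $C^*$-algebra $\cA=C^*(\pi(\Gamma(\fK)))$, Theorem~\ref{thm:d.24} applied to its irreducible representations, and localization by bump functions --- is correct and is essentially the paper's own proof. Where you genuinely add something is the no-cancellation step: the paper asserts that boundedness of the single operator $\pi(\chi x)$, for $\chi$ a bump function and $x\in\fk$, forces $\sup_\beta\sum_{p\in V\cap\bx_\beta}\|\rho^\beta_p(x)\|<\infty$, but says nothing about why no cancellation among the commuting summands can occur; your evaluation of $\pi(\check\alpha\otimes\chi)$ on the highest weight vector, where every contribution $\chi(p)\lambda^\beta_p(\check\alpha)$ is nonnegative, makes exactly this step explicit, and together with the norm comparison of Lemma~\ref{NormRep} it closes the estimate. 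Up to and including the extension to $\Gamma^0(\fK)$ by density, your proposal is sound.

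The genuine gap is in the $\Gamma^b(\fK)$ part, and it is fatal to the route you chose rather than a technicality to be ``arranged''. A bounded section need not be measurable in any sense, so the fibrewise operators $z\mapsto\sum_{x\in\bx_z}\rho^z_x(s(x))$ need not form a measurable field, and the direct integral you want to call $\hat\pi(s)$ simply does not exist; note that your own write-up quietly retreats to ``bounded \emph{Borel} section $s$'' at the decisive moment, which is already a weakening of the statement. Concretely: take $X=[0,1]$, $\fK=X\times\su_2(\C)$, and let $\pi$ act pointwise on $\cH=L^2([0,1],\C^2)$; this is the direct integral over Lebesgue measure of the defining evaluation representations, and for the section $s$ equal to a fixed $0\ne h\in\su_2(\C)$ on a non-measurable $W\subseteq[0,1]$ and zero elsewhere, your prescription yields no operator at all. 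The paper's proof avoids the problem by a measure-free device: since the $C^*$-norm is the supremum over irreducible representations, it replaces $\pi$ at the outset by the direct \emph{sum} $\bigoplus_\beta\beta\circ\pi$ over all irreducible representations of $\cA$ (an isometric model of $\cA$); each summand is a finite tensor product of evaluation representations, hence is defined verbatim on every bounded section, and the uniform bound $C<\infty$ already proved makes the direct sum a bounded operator --- a sum, not an integral, so no measurability is needed. (If one insists, as you implicitly do, on extending the original representation on its original Hilbert space, then even the paper's one-line argument needs supplementing, e.g.\ by a Sikorski-type extension of Boolean homomorphisms into the commutant $L^\infty$; the safe reading of the theorem, and of the paper's ``we may assume $\pi$ is a direct sum of irreducibles'', is through the direct-sum model.)
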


\begin{prf} Let $(\pi, \cH)$ be a bounded representation 
of $\Gamma(\fK)$ and $\cA := C^*(\im(\pi))$ the $C^*$-algebra generated by 
its image. We have to show that the linear map 
$\pi \: \Gamma(\fK) \to \cA$ 
is continuous with respect to the norm 
$\|s\|_\infty := \sup_{x \in X} \|s(x)\|$. 

To this end, we recall that the irreducible representations 
of a $C^*$-algebra determine its 
norm (\cite[Thm.~2.7.3]{Dix77}). For every irreducible 
representation $\beta$ of $\cA$, the unitary representation 
$\beta \circ \pi$ of $\Gamma(\fK)$ is bounded and irreducible, 
hence a finite tensor product of evaluation representations 
$\pi_{x,\rho}$ (Theorem~\ref{thm:d.24}). 
We may therefore assume that $\pi$ is a direct sum of irreducible 
representations $\pi_{\bx_j,\rho_j}$, $j \in J$. We now have to show that 
there exists a $C > 0$ with 
\[ \|\pi(s)\| = \sup_{j \in J} \|\pi_{\bx_j, \rho_j}(s)\| \leq C \|s\|_\infty 
\quad \mbox{ for } \quad s \in \Gamma(\fK).\] 
With $\bx_j = \{ x_j^1, \ldots, x_j^{n_j}\}$ 
and $\|\pi_{\bx_j,\rho_j}\| = \sum_{i = 1}^{n_j} \|\rho_j^i\|$ 
we thus obtain 
\[ \|\pi(s)\| 
= \sup_{j \in J} \|\pi_{\bx_j, \rho_j}(s)\| 
\leq \Big(\sup_{j \in J} \sum_{i = 1}^{n_j} \|\rho_j^i\| \Big)\|s\|_\infty.\] 
It now remains to show that $C := 
\sup_{j \in J} \sum_{i = 1}^{n_j} \|\rho_j^i\|< \infty$. 
Since every summand $\pi_{\bx_j, \rho_j}$ defines a bounded representation 
of $\Gamma^0(\fK)$, it follows that $\pi$ extends to a continuous 
representation of the Lie algebra $\Gamma^b(\fK)$ 
of all bounded sections.

Let $y_0 \in X$ and pick open neighborhoods 
$V = V(y_0) \subeq U$ of $y_0$ such that $\oline V \subeq U$ is compact and 
$\fK\res_U$ is trivial. We identify 
$\Gamma_c(\fK\res_U) \subeq \Gamma(\fK)$ with 
$C^\infty_c(U,\fk)$. Let $\chi \in C^\infty_c(U,\R)$ with 
$0 \leq \chi \leq 1$ and $\chi\res_{V} = 1$. We thus obtain a linear 
embedding 
$\gamma \: \fk \to \Gamma_c(\fK\res_U) \subeq 
\Gamma(\fK), x \mapsto \chi x.$
Since $\pi$ defines a bounded representation of 
$\Gamma(\fK)$, the operator 
$\pi(\gamma(x))$ is bounded for every $x \in \fk$. In particular, we have 
\[ \sup_{j \in J} \sum_{x_j^i \in V \cap \bx_j} \|\rho_j^i(x)\| < \infty, \]
where we consider for $x_j^i \in V$ the representation $\rho_j^i$ of 
$\fK_{x_j^i}$ as a representation of $\fk$. As $\fk$ is finite-dimensional, 
using these estimates for a linear basis of $\fk$, we obtain 
\[ C_{y_0} := \sup_{j \in J} \sum_{x_j^i \in V \cap \bx_j} \|\rho_j^i\| < \infty. \] 
Since $X$ is compact, there are finitely many points 
$y_1, \ldots, y_N \in X$ with $X \subeq \bigcup_{k = 1}^N V(y_j)$. 
Then $\|\pi_{\bx_j, \rho_j}\| \leq \tilde C := \sum_k C_{y_k}$ holds for 
every $j \in J$, and we thus obtain 
$\|\pi(s)\| \leq \tilde C \|s\|_\infty$. 
\end{prf}

This shows that, for all $k \geq 0$, 
the Banach--Lie algebra $\Gamma^{k}(\fK)$ of $C^k$-sections
has the same bounded unitary representation theory as the 
Fr\'echet--Lie algebra $\Gamma(\fK)$ of smooth sections.  

For the Lie algebra $\Gamma^{k}_{0}(\fK)$, the situation is slightly more subtle, 
even in the case of irreducible representations.
Although every bounded unitary 
factor (irreducible) representation of $\Gamma^{k}_{0}(\fK)$
restricts to a bounded unitary 
factor (irreducible) representation of 
$\Gamma_{c}(\fK|_{X^{\circ}})$, the latter 
do not always extend to the former.
The following theorem shows that 
bounded unitary 
factor (irreducible) representation of 
$\Gamma_{c}(\fK|_{X^{\circ}})$
(which are all obtained by 
infinite tensor products of irreducible evaluation representations, cf. Theorem \ref{thm:4.12}), 
extend to bounded representations of $\Gamma^k_{0}(\fK)$
only if the highest weights of the evaluation representations
satisfy the following
growth condition as the points approach the boundary.

\begin{thm}\label{Growth}
Let $X$ be a compact manifold with boundary, and $\fK \rightarrow X$ a Lie algebra bundle 
whose typical fiber is a compact simple Lie algebra.
Then every factor (irreducible) bounded unitary representation
of $\Gamma^k_{0}$ is 
equivalent to
\[
\pi_{\mathbf{x},\rho,\beta} \colon \Gamma^{k}_{0}(\fK) \rightarrow B(\cH)\,,
\quad
s \mapsto \sum_{x\in \mathbf{x}} \beta \circ \iota_x  (\rho_x(s(x)))\,,
\]
where
$(\mathbf{x},\rho,\beta)$ is a triple with
$\mathbf{x} \subset X^{\circ}$ 
a locally finite subset, 
$\rho = \{(\rho_{x},V_x) \,;\, x \in \mathbf{x}\}$
a set of irreducible representations of $\fK_{x}$ 
with highest weight $\lambda_x$ satisfying
\begin{equation}\label{FastEnough}
\sum_{x\in \mathbf{x}} \|\lambda_x\|_{\kappa} \,d(x,\partial X)^k < \infty\,, 
\end{equation}
and $(\beta,\cH)$ a factor (irreducible) representation of
the $C^*$-algebra $\cA_{\mathbf{x},\rho} = 
\widehat{\bigotimes}_{x\in \mathbf{x}}B(V_x)$ (cf.\ \eqref{eq:iota}).
\end{thm}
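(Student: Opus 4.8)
The plan is to prove both implications of the stated equivalence, reducing the ``hard'' direction to the classification already obtained over the interior. Throughout, write $d_x := d(x,\partial X)$ and let $\lambda_x$ be the highest weight of $\rho_x$, measured in the norm $\|\cdot\|_\kappa$ attached to $\kappa$. The backward direction (a triple $(\bx,\rho,\beta)$ satisfying \eqref{FastEnough} yields a bounded representation) will be obtained by a direct norm estimate showing that $\eta_{\bx,\rho}$ extends continuously from $\Gamma_c(\fK|_{X^\circ})$ to $\Gamma^k_0(\fK)$. The forward direction will be obtained by restricting a given representation to $\Gamma_c(\fK|_{X^\circ})$, invoking Theorem~\ref{thm:4.12} to produce $(\bx,\rho,\beta)$, and then extracting \eqref{FastEnough} from the boundedness of $\pi$ in the $C^k$-norm by testing against suitable sections.

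For the backward direction I would combine three estimates. First, a Taylor estimate at the boundary: since $j^k(s)|_{\partial X}=0$, integrating $\nabla^k s$ along the minimal geodesic from the nearest boundary point gives $\|s(x)\|_{g,x}\le C\,\|s\|_k\,d_x^k$ for all $s\in\Gamma^k_0(\fK)$, with $C$ independent of $x$. Second, the standard highest-weight bound $\|\rho_x(\xi)\|\le \|\lambda_x\|_\kappa\,\|\xi\|_\kappa$, valid because every weight of $\rho_x$ lies in the convex hull of the Weyl orbit of $\lambda_x$. Third, the fact that in the UHF algebra $\cA_{\bx,\rho}$ the elements $\iota_x(\rho_x(s(x)))$ lie in distinct tensor factors, hence commute, so their joint spectrum is a product and $\big\|\sum_{x}\iota_x(\rho_x(s(x)))\big\|\le \sum_x\|\rho_x(s(x))\|$. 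Chaining these yields $\|\eta_{\bx,\rho}(s)\|\le C\|s\|_k\sum_{x\in\bx}\|\lambda_x\|_\kappa\,d_x^k$, finite by \eqref{FastEnough}; applied to tails, the same estimate shows the defining sum converges in $\cA_{\bx,\rho}$, so $\eta_{\bx,\rho}$ extends to a continuous Lie algebra homomorphism and $\pi_{\bx,\rho,\beta}=\beta\circ\eta_{\bx,\rho}$ is a bounded representation.

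For the forward direction I would first note that $\Gamma_c(\fK|_{X^\circ})$ is dense in $\Gamma^k_0(\fK)$ for $\|\cdot\|_k$ (a collar cut-off argument, where $j^k s|_{\partial X}=0$ controls the $C^k$-norm of the correction). Hence $\pi(\Gamma_c)$ and $\pi(\Gamma^k_0)$ generate the same von Neumann algebra, so $\pi|_{\Gamma_c}$ is again a factor (irreducible) representation and Theorem~\ref{thm:4.12} applies, producing $\bx\subset X^\circ$, the $\rho_x$, and $\beta$ with $\pi|_{\Gamma_c}=\beta\circ\eta_{\bx,\rho}$. As $\cA_{\bx,\rho}$ is a simple (UHF) $C^*$-algebra, the nonzero $\beta$ is faithful, hence isometric, so $\|\pi(s)\|=\|\eta_{\bx,\rho}(s)\|$ for $s\in\Gamma_c$. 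To extract \eqref{FastEnough}, for each $x$ I choose the unit highest-weight direction $\xi_x\in\fK_x$ (so $\la\rho_x(\xi_x)v_{\lambda_x},v_{\lambda_x}\ra=i\|\lambda_x\|_\kappa$) and, using local finiteness, pairwise disjoint coordinate balls $B_x\subset X^\circ$ of radius comparable to $d_x$ carrying bumps $\psi_x$ with $\psi_x(x)=1$ and $\|\psi_x\|_k\le C d_x^{-k}$. For finite $F\subset\bx$ put $s_F:=\sum_{x\in F} d_x^k\,\psi_x\,\xi_x$; disjointness gives $\|s_F\|_k\le C$ and $s_F(x)=d_x^k\xi_x$ at each $x\in F$, vanishing at the other points of $\bx$. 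Evaluating the product vector state $\bigotimes_x\omega_{v_{\lambda_x}}$ on $\eta_{\bx,\rho}(s_F)$ gives $\|\eta_{\bx,\rho}(s_F)\|\ge \sum_{x\in F} d_x^k\|\lambda_x\|_\kappa$, so boundedness forces $\sum_{x\in F}\|\lambda_x\|_\kappa d_x^k\le C\|\pi\|$ for all finite $F$, i.e.\ \eqref{FastEnough}. Finally $\pi=\pi_{\bx,\rho,\beta}$ on all of $\Gamma^k_0(\fK)$ by density and continuity, and the equivalence/uniqueness statements follow from Theorem~\ref{thm:4.12} as before.

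The main obstacle I expect is precisely this geometric test-section construction: one must realise the prescribed values $d_x^k\xi_x$ by sections of $C^k$-norm $O(1)$, which forces the supporting bumps to have radius \emph{comparable} to $d_x$ rather than merely smaller, so that the exponent produced is exactly $k$ and the derived inequality matches \eqref{FastEnough}. Since $\bx$ may cluster as it approaches $\partial X$, keeping the balls both disjoint and of radius $\sim d_x$ is delicate, and is where the local finiteness of $\bx$ together with the compactness of $\partial X$ (providing a product collar with uniformly controlled geometry) must be used carefully; the independence of the final condition from the auxiliary choices of $g$ and $\nabla$ is guaranteed by the stated equivalence of the resulting norms.
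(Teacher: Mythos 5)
Your overall architecture --- restricting to $\Gamma_c(\fK|_{X^\circ})$, which is dense in $\Gamma^k_0(\fK)$, invoking Theorem~\ref{thm:4.12}, using simplicity of the UHF algebra to make $\beta$ isometric, and reducing everything to the question of when $\eta_{\bx,\rho}$ is $C^k$-continuous --- is exactly the paper's, and your backward direction (Taylor estimate at the boundary, the bound $\|\rho_x(\xi)\|\le\|\lambda_x\|_\kappa\|\xi\|_\kappa$, and the triangle inequality in $\cA_{\bx,\rho}$) is correct and agrees with the paper's sufficiency argument.

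The gap is in your extraction of \eqref{FastEnough}. You need pairwise disjoint balls $B_x$, $x\in F$, of radius \emph{comparable} to $d_x=d(x,\partial X)$, carrying bumps with $\|\psi_x\|_k\le C\,d_x^{-k}$, and the $B_x$ must moreover avoid the remaining points of $\bx$ (otherwise the product vector state picks up uncontrolled terms of arbitrary sign). Local finiteness of $\bx$ in $X^\circ$ does not make this possible: locally finite sets can cluster at comparable depth. Take $X=[0,1]$ and $\bx\supseteq\{1/n,\ 1/n+4^{-n} : n\ge 2\}$; the two points of the $n$-th pair both have $d_x\approx 1/n$ but are only $4^{-n}$ apart, so separating bumps must have radius $\le 4^{-n}/2$, whence $\|\psi_x\|_k\gtrsim 4^{nk}\gg d_x^{-k}$ and $\|s_F\|_k\gtrsim (4^n/n)^k$ is unbounded in $F$; the inequality $\sum_{x\in F}\|\lambda_x\|_\kappa d_x^k\le C\|\pi\|$ is then not obtained for precisely those $F$ that matter. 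The failure is intrinsic to your choice of a \emph{per-point} highest-weight direction $\xi_x$: once directions vary from point to point, clustered points force small supports. The paper avoids this by testing with a \emph{single} section pointing in a fixed direction $\xi_0$ (constant in a trivialization over a collar patch) times the radial profile $\delta\,x^{k+\gamma}$ with $\gamma>0$ (so the $k$-jet vanishes at $\partial X$ while $\|x^{k+\gamma}\|_k$ stays bounded uniformly in $\gamma$), and then lets $\gamma\downarrow 0$. The price of using a direction not adapted to each $\rho_x$ is that the vector-state bound must be replaced by the lower estimate of Lemma~\ref{NormRep}, $\|\rho_x(\xi_0)\|\ge C(\fk)\|\lambda_x\|_\kappa\|\xi_0\|_\kappa$, whose constant is \emph{uniform over all irreducible representations}, combined with the fact that the commuting skew-adjoint elements $\iota_x(\rho_x(\xi_0))$ have spectral values of size $\gtrsim\|\lambda_x\|_\kappa$ all on the same side of $i\R$, so that their norms add in $\cA_{\bx,\rho}$. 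Your proof can be repaired along these lines --- keep bump functions, but use the fixed direction $\xi_0$, so that a single bump may cover a whole cluster --- but as written the disjoint-ball construction, which you yourself flag as the main obstacle, does not go through.
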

In order to prove Theorem \ref{Growth}, we will have use for the following lemma.

\begin{lem}\label{NormRep}
Let $\fk$ be a compact simple Lie algebra with invariant scalar product $\kappa$. 
Then there exists a constant ${C(\fk) > }0$ such that 
\begin{equation}\label{NormRepEst}
 C(\fk) \|\lambda\|_{\kappa} \|x\|_{\kappa} \leq \|\rho(x)\| \leq \|\lambda\|_{\kappa} \|x\|_{\kappa} 
\end{equation}
holds for all
irreducible representations $\rho$ with highest weight $\lambda$, and for all $x \in \fk$.
\end{lem}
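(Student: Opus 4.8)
The plan is to prove the two-sided estimate \eqref{NormRepEst} by exploiting the fact that both sides are invariant norms on $\fk$, and that on a compact \emph{simple} Lie algebra any two invariant inner products are proportional. Concretely, for a fixed irreducible representation $\rho$ with highest weight $\lambda$, the function $x \mapsto \|\rho(x)\|$ is an $\Ad$-invariant norm on $\fk$ (it is a norm because $\rho$ is injective on the simple algebra $\fk$, and $\Ad$-invariance follows from $\rho(\Ad(k)x) = \rho(k)\rho(x)\rho(k)^{-1}$ together with unitarity of the representing group). Likewise $\|x\|_\kappa$ is an $\Ad$-invariant norm. Since $\fk$ is simple, the two corresponding invariant inner products — or rather the invariant norms they define — must be comparable up to constants on the unit sphere, which is compact.

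First I would establish the upper bound $\|\rho(x)\| \le \|\lambda\|_\kappa\,\|x\|_\kappa$. The natural route is to pass to the maximal torus: since $x$ is conjugate under $\Ad(K)$ to an element $t$ of the Cartan subalgebra $\ft$, and both norms are $\Ad$-invariant, it suffices to bound $\|\rho(t)\|$ for $t \in \ft$. On the weight-space decomposition of $V$, the operator $\rho(t)$ acts diagonally with eigenvalues $i\mu(t)$ as $\mu$ ranges over the weights of $\rho$, so $\|\rho(t)\| = \max_\mu |\mu(t)|$. Every weight $\mu$ satisfies $\|\mu\|_\kappa \le \|\lambda\|_\kappa$ (weights lie in the convex hull of the Weyl orbit of $\lambda$, hence have $\kappa$-norm bounded by that of $\lambda$), and by Cauchy--Schwarz $|\mu(t)| \le \|\mu\|_\kappa\,\|t\|_\kappa \le \|\lambda\|_\kappa\,\|t\|_\kappa$. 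This gives the upper estimate with constant $1$.

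For the lower bound I would produce a single direction in which $\rho$ is guaranteed to be large relative to $\|\lambda\|_\kappa$, and then use invariance and compactness to spread this to all of $\fk$. Taking $t = \check\lambda$ (the element of $\ft$ dual to $\lambda$ under $\kappa$, normalized so that $\lambda(t) = \|\lambda\|_\kappa^2$), the highest weight contributes the eigenvalue $i\|\lambda\|_\kappa^2$, so $\|\rho(\check\lambda)\| \ge \|\lambda\|_\kappa^2 = \|\lambda\|_\kappa\,\|\check\lambda\|_\kappa$. To obtain a uniform lower bound valid for \emph{all} $x$ and all $\rho$, I would argue that the ratio $\|\rho(x)\| / (\|\lambda\|_\kappa\,\|x\|_\kappa)$, being a continuous positive function on the compact set $\{x : \|x\|_\kappa = 1\}$, attains a positive minimum; the main obstacle is that a priori this minimum could degenerate to $0$ as $\rho$ varies over infinitely many highest weights. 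The key is that after normalizing $\|x\|_\kappa = 1$ and dividing out $\|\lambda\|_\kappa$, the relevant quantity $\max_\mu |\langle \mu/\|\lambda\|_\kappa,\, x\rangle_\kappa|$ depends on $\lambda$ only through the \emph{direction} of the dominant weight; since the highest weight always appears and $\lambda/\|\lambda\|_\kappa$ ranges over a subset of the unit sphere in $\ft^*$, one extracts a uniform constant $C(\fk) > 0$ by a compactness argument on the joint parameter space of $(x, \lambda/\|\lambda\|_\kappa)$, using that $|\langle \lambda/\|\lambda\|_\kappa, x\rangle_\kappa|$ cannot vanish identically in $x$. This uniformity over the infinite family of representations is the delicate point, and I expect it to be where the simplicity of $\fk$ is genuinely used, via the irreducibility of the $\Ad$-action on $\fk$ forcing the invariant norms to be mutually bounded by constants independent of $\rho$.
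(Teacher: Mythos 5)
Your upper bound is exactly the paper's argument (conjugate $x$ into $\ft$, weights of $\rho$ lie in $\conv(\cW\lambda)$, Cauchy--Schwarz), so that half is fine. The gap is in the uniformity step of your lower bound. A compactness argument on the joint parameter space of $(x,\lambda/\|\lambda\|_\kappa)$ can only produce a positive constant if the function being minimized is \emph{pointwise} positive at every point of that compact space; the property you invoke instead --- that $|\langle \lambda/\|\lambda\|_\kappa,x\rangle_\kappa|$ ``cannot vanish identically in $x$'' --- is far weaker and does not suffice. Indeed, the quantity you actually control, the single highest-weight pairing, vanishes whenever $x$ is orthogonal to $t_\lambda$, so the infimum of $|\langle \lambda/\|\lambda\|_\kappa,x\rangle_\kappa|$ over pairs of unit vectors is exactly $0$ and no $C(\fk)>0$ comes out. (Relatedly, your claim that $\max_\mu|\langle\mu/\|\lambda\|_\kappa,x\rangle_\kappa|$ depends on $\lambda$ only through its direction is not true of the full weight system; only the lower bound coming from the Weyl orbit $\cW\lambda$ has that property.) You also never exploit Weyl-group invariance to normalize $x$, which is what makes a single-term bound viable.

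There are two ways to close the gap, and both use simplicity through the irreducibility of the $\cW$-action on $\ft$ --- precisely the point you correctly anticipated but did not supply. The paper's route: since $\|\rho(\cdot)\|$ and $\|\cdot\|_\kappa$ are $\cW$-invariant, one may take $x$ in the closed positive chamber $\ft_+$; dominance of $\lambda$ gives $t_\lambda\in\ft_+$, so $\|\rho(x)\|\geq|\lambda(x)|=\kappa(t_\lambda,x)$, and it remains to prove that the chamber is acute: $\kappa(y,z)>0$ for all nonzero $y,z\in\ft_+$ (compactness of the unit sphere then gives the uniform constant). This positivity is proved by contradiction: if $\kappa(y,z)\leq 0$, then $\cW z\subseteq z-\sum_{\alpha\in\Delta^+}\R_+t_\alpha$ forces $\kappa(y,\cdot)\leq 0$ on $\conv(\cW z)$; simplicity gives $\ft^{\cW}=\{0\}$, hence $\sum_{w\in\cW}wz=0$, while $\cW z$ spans $\ft$, so $\kappa(wz,y)<0$ for some $w$, contradicting $0=\kappa\bigl(y,\sum_{w}wz\bigr)$. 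Alternatively, a repair closer to your outline: lower-bound $\|\rho(x)\|$ by the Weyl-orbit maximum $\max_{w\in\cW}|\langle w\lambda,x\rangle_\kappa|$, which does depend only on the direction of $\lambda$ and is pointwise positive for all nonzero $x$: if it vanished, the span of $\cW\lambda$, a nonzero $\cW$-invariant subspace, would lie in a proper subspace, contradicting irreducibility. With that positivity in hand, your compactness argument on $(x,\lambda/\|\lambda\|_\kappa)$ goes through. Either way, the missing ingredient is a precise positivity statement for a $\cW$-saturated quantity, not the non-identical-vanishing you cite.
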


\begin{prf} Let $\ft \subeq \fk$ be maximal abelian and 
$\fh := \ft_\C \subeq \g := \fk_\C$ the corresponding 
Cartan subalgebra (cf.\ Section~\ref{sec:2}). We also fix a connected Lie group $K$ 
with Lie algebra $\fk$. We denote by $\cW := N_K(\ft)/Z_K(\ft)$ the Weyl group 
of $(\fk,\ft)$, and by $\Delta^+ \subeq \Delta \subeq \fh^*$ a positive 
system with respect to which $\lambda$ is the highest weight of $\rho$. 
Recall that every adjoint orbit $\Ad(K)x$ intersects 
\[ \ft_+ := \{ z \in \ft \: (\forall \alpha \in \Delta^+)\, i\alpha(z) \geq 0\},\] 
so we may assume that $x \in \ft_+$.

First we recall that the set $\cP_\rho$ of $\ft$-weights of $\rho$ 
is $\cW$-invariant, contains $\lambda$ and is contained in 
$\conv(\cW\lambda)$ (cf.\ \cite[Ch.~VIII]{Bou90}). 
As $\cW$ acts isometrically on $\ft^*$ with respect to the induced 
norm, the relation 
\[ \|\rho(x)\|= \sup |\la \cW \lambda, x \ra| \leq \|\lambda\|_\kappa \|x\|_\kappa\] 
for $x \in \ft$ follows from the Cauchy--Schwarz inequality. 

It remains to show that there exists a constant $C > 0$ with  
$\|\rho(x)\| \geq C \|\lambda\|_\kappa$ for every $x \in \ft_+$ 
with $\|x\| = 1$. 
For $\beta \in i\ft^*$, let 
$t_\beta \in \ft$ be the unique element with $\kappa(t_\beta, z) = i\beta(z)$ for 
$z \in \ft$. Then 
$\ft_+ = \{ z \in \ft \: (\forall \alpha \in \Delta^+)\, \kappa(t_\alpha, z) 
\geq 0\}$.  
For $\alpha \in \Delta$, we have 
$-i\check \alpha = \frac{2}{\|t_\alpha\|^2} t_\alpha$, so that 
$\kappa(t_\lambda, t_\alpha) = i \lambda(t_\alpha) \geq 0$ for $\alpha \in \Delta^+$ 
implies $t_\lambda \in \ft_+$. 

Now we observe that 
\[ \|\rho(x)\|\geq |\lambda(x)| = \kappa(t_\lambda, x),\] 
so that it remains to show that 
\[ \inf \{ \kappa(y,z) \: \|y\| = \|z\| = 1, y,z \in \ft_+ \} > 0.\] 
By compactness, it suffices to show that 
$\kappa(y,z) > 0$ for $0 \not= y,z \in \ft_+$. We argue by contradiction 
and assume that this is not the case. Then there exists non-zero 
$y,z \in \ft_+$ with $\kappa(y,z) \leq 0$. 
Since 
\[ \cW z \subeq z - \sum_{\alpha \in \Delta^+} \R_+ t_\alpha \]
by \cite[Prop.~V.2.7(ii)]{Ne00}, we obtain 
$\kappa(y,z') \leq  0$ for $z' \in \conv(\cW z)$. 
The simplicity of $\fk$ implies that 
$\ft$ is a simple $\cW$-module, so  
$f := \sum_{w \in \cW} w z$ is an element of $\ft^\cW= \{0\}$. 
Since $\cW z$ spans $\ft$, there exists a $w \in \cW$ 
with $\kappa(w z, y) \not=0$, hence $< 0$, but this leads to the contradiction 
$0 = \kappa(y,f) < 0$. 
\end{prf}

\begin{prf}{\it (of Theorem \ref{Growth})}
Since $X$ is compact, the continuous inclusion
$\Gamma_c(\fK|_{X^{\circ}}) \hookrightarrow \Gamma^k_0(\fK)$
of Lie algebras is dense.
Thus
every norm continuous factor (irreducible) representation of $\Gamma^k_0(\fK)$ 
restricts to a 
norm continuous factor (irreducible)
representation of $\Gamma_c(\fK|_{X^{\circ}})$, and is uniquely defined by 
this restriction. Theorem \ref{thm:4.12}
then yields a locally finite set $\mathbf{x} \subseteq X^{\circ}$ and representations 
$\rho_{x}$ of $\fK_x$ and $\beta$ of $\cA_{\mathbf{x},\rho}$
such that $\pi|_{\Gamma_{c}(\fK|_{X^{\circ}})} = \beta \circ \eta_{\mathbf{x},\rho}$.
Since $\beta$ is an isometry, 
the representation of $\Gamma_{c}(\fK|_{X^{\circ}})$ extends to a bounded unitary 
representation of $\Gamma^k_{0}$ if and only if 
$\eta_{\mathbf{x},\rho} \colon \Gamma_{c}(\fK|_{X^{\circ}}) \rightarrow \cA_{\mathbf{x},\rho}$
is continuous w.r.t.\ the $C^k$-norm.
In the following, we write 
$d_x := d(x,\partial X)$ for brevity.

We first prove that $\sum_{x\in \mathbf{x}} \|\lambda_x\|_{\kappa} d^k_x < \infty$
implies continuity of $\eta_{\mathbf{x},\rho}$.
Recall that
$\eta_{\mathbf{x},\rho}(s) := \sum_{x\in \mathbf{x}} \iota_x \rho_x(s(x))$, where 
$\iota_x \colon B(V_x) \rightarrow \cA_{\mathbf{x},\rho}$
is the canonical inclusion.
Since the $\iota_x$ are isometries, we have
\begin{equation}\label{NormDef}
\|\eta_{\mathbf{x},\rho}(s)\| = \sum_{x\in \mathbf{x}} \| \rho_x(s(x)) \|\,.
\end{equation}
By Lemma \ref{NormRep}, we have the estimate $\|\rho_x(s(x))\| \leq 
\|\lambda_x\|_{\kappa} \|s(x)\|_{\kappa}$.
Since every Lie connection is orthogonal w.r.t.\ $\kappa$, we can combine
Taylor's Theorem with the parallel transport 
equation to yield $\|s(x)\|_{\kappa} \leq \frac{1}{k!} d_x^k \|s\|_k$, 
and thus $\|\rho_x(s(x))\| \leq \frac{1}{k!}\|\lambda_x\|_{\kappa} d_x^k \,\|s\|_{k}$.
Now (\ref{NormDef}) yields
$\|\eta_{\mathbf{x},\rho}(s)\| \leq \frac{1}{k!}(\sum_{x\in \mathbf{x}} \|\lambda_x\|_{\kappa} d^k_x)
\|s\|_k $, which shows that $\eta_{\mathbf{x},\rho}$ is continuous
if $\sum_{x\in \mathbf{x}} \|\lambda_x\|_{\kappa} d^k_x < \infty$.

Conversely, suppose that $\sum_{x\in \mathbf{x}} \|\lambda_x\|_{\kappa} d^k_x = \infty$.
We will exhibit sections $s_{\gamma}$ with $\|\eta_{\mathbf{x},\rho}(s_{\gamma})\| \geq 1$ but
$\|s_{\gamma}\|_{k}$ arbitrarily small as $\gamma \downarrow 0$, showing that
$\eta_{\mathbf{x},\rho}$ is not continuous in the $C^k$-norm.

The first step is to localize the problem.
Using the exponential flow for the \mbox{metric $g$},
we find an $\varepsilon \in (0,1/2)$,
a finite covering of $\partial M$ by $U_i \subseteq \partial M$,
open neighborhoods $V_i \supset \overline{U}_{i}$, 
and local diffeomorphisms 
$\phi_i \colon V_{i} \times [0,2 \varepsilon) \rightarrow X$,
such that $\fK$ trivializes over $\im(\phi_i)$
and such that 
$d(\phi_i(v,x),\partial X) = x$ (cf. \cite[Ch.~9, Thm.~20/21]{Sp70}).
The local diffeomorphisms are defined by the flow
$\phi_i(v,t) = \exp_{v}(t\vec{n}_{v})$
along a (locally defined) inward pointing normal vector field $\vec{n}$ on $\partial M$.  

Since $\mathbf{x}$ is locally finite in $X^{\circ}$, and
since $X^{\circ}$ is covered by the open sets
$\hat{U}_i := \phi_i(U_i \times (0,\varepsilon))$ together with 
the compact set $\{x\in M \,;\, d_x \geq \varepsilon/2\} \subset X^{\circ}$, 
there is at least one $i$ such that
$\sum_{x \in \mathbf{x} \cap \hat{U}_i} \|\lambda_x\|_{\kappa} d_x^k= \infty$.

Now let $\delta > 0$ and let $\xi \in C^{\infty}_{c}(\im(\phi_i),\fk)$ 
be such that its restriction to $\hat{U}_i$ is 
a constant $\xi_0 \in \fk$ with $\|\xi_0\|_{\kappa} = 1$.
Using the trivialization of $\fK$ over $\im(\phi_i)$,
we define for every $\gamma \in (0,1)$ the section 
$s_{\gamma} \in \Gamma^k_{0}(X)$
by 
$s_{\gamma}( \phi_i (u,x)) := 
\delta \,x^{k + \gamma} \xi$. 
Since the topology induced by the $C^k$-norm is independent of the choice of
connection, we may as well choose $\nabla$ to be compatible
with the trivialization over $\mathrm{supp}(\xi)$.
We then have $\|s_{\gamma}\|_{k} \leq \delta C \|\xi\|_{k} \|x^{k+\gamma}\|_{k}$
for some suitable constant $C$, with 
\[\textstyle
\|x^{k+\gamma}\|_{k} := \sup\left\{\sum_{j=0}^{k} |\frac{d^j}{dx^j} x^{k+\gamma}|\:x\in [0, 2\varepsilon]\right\}
\leq \sum_{j=0}^{k+1}\frac{(k+1)!}{j!} \leq e\,(k+1)!\,.
\]
Thus $\lim_{\delta \downarrow 0} \|s_{\gamma}\|_{k} = 0$ uniformly in $\gamma$.   
However, we have $\|\rho_x(s(x))\| \geq C(\fk) \|\lambda_x\|_{\kappa} \|s(x)\|_{\kappa}$
by Lemma \ref{NormRep},
so that 
\begin{equation}\label{CriticalSection}
\|\eta_{\mathbf{x},\rho}(s_{\gamma})\| \geq 
{\delta \, C(\fk)} \sum_{x \in \mathbf{x} \cap \hat{U}_{i}} 
\|\lambda_x\|_{\kappa} d_x^{k+\gamma}\,.
\end{equation}
Now since 
$\sum_{x \in \mathbf{x} \cap \hat{U}_{i}} \|\lambda_x\|_{\kappa} d_x^k= \infty$,
there exists for every $N \in \mathbb{N}$ a finite
subset $\mathbf{x}_0 \subset \mathbf{x} \cap \hat{U}_{i}$
such that 
$\sum_{x \in \mathbf{x}_0 } \|\lambda_x\|_{\kappa} d_x^k > N$.
Taking the limit $\gamma \rightarrow 0$ in the inequality
$\|\eta_{\mathbf{x},\rho}(s)\| \geq 
\delta \, C(\fk) \sum_{x \in \mathbf{x}_0} 
\|\lambda_x\|_{\kappa} d_x^{k+\gamma}$ 
and choosing $N>1/(\delta \, C(\fk))$, 
we see that there exists a $\gamma > 0$
for which $\|\eta_{\mathbf{x},\rho}(s_{\gamma})\| > 1$.
Since  
$\lim_{\delta\downarrow 0} \|s_{\gamma}\|_{k} = 0$
uniformly in $\gamma$, this finishes the proof.\end{prf}

The following is a direct consequence of Theorem \ref{Growth} in the case $k=0$.
\begin{cor}
Every bounded irreducible unitary representation $(\pi,\mathcal{H})$ of $\Gamma^0_0(\fK)$ 
is a finite tensor product of irreducible evaluation representations. 
In particular, $\mathcal{H}$ is
finite-dimensional. 
\end{cor}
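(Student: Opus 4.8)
The plan is to read the result straight off Theorem~\ref{Growth} specialised to $k=0$. For $k=0$ the growth condition \eqref{FastEnough} becomes
$\sum_{x\in\mathbf{x}}\|\lambda_x\|_\kappa\, d(x,\partial X)^0 = \sum_{x\in\mathbf{x}}\|\lambda_x\|_\kappa < \infty$,
since $d(x,\partial X)^0 = 1$ for every interior point $x\in X^\circ$. Thus Theorem~\ref{Growth} already tells us that every bounded irreducible unitary representation of $\Gamma^0_0(\fK)$ is of the form $\pi_{\mathbf{x},\rho,\beta}$ with $\mathbf{x}\subset X^\circ$ locally finite, the $\rho_x$ irreducible with highest weights $\lambda_x$, and $\sum_{x\in\mathbf{x}}\|\lambda_x\|_\kappa<\infty$. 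It remains only to argue that this summability forces $\mathbf{x}$ to be finite and $\cH$ finite-dimensional.

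The key step is the observation that the highest weights $\lambda_x$ all lie in the weight lattice $P\subseteq\fh^*$ of the compact simple Lie algebra $\fk$. Since $P$ is a discrete lattice, its nonzero elements are bounded away from the origin: there is a constant $c>0$, depending only on $\fk$ and $\kappa$, with $\|\mu\|_\kappa\geq c$ for every nonzero $\mu\in P$. Applied to the dominant integral weights $\lambda_x$, the finiteness of $\sum_{x\in\mathbf{x}}\|\lambda_x\|_\kappa$ immediately implies that $\lambda_x\neq 0$ for only finitely many $x\in\mathbf{x}$.

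The points $x$ with $\lambda_x=0$ carry the trivial one-dimensional representation $V_x\cong\C$, so their evaluation representations are trivial and contribute only scalar factors $B(V_x)\cong\C$ to $\cA_{\mathbf{x},\rho}$; discarding them, I may assume that $\mathbf{x}$ is finite. Then $\cA_{\mathbf{x},\rho}=\bigotimes_{x\in\mathbf{x}}B(V_x)\cong B\bigl(\bigotimes_{x\in\mathbf{x}}V_x\bigr)$ is a full matrix algebra on the finite-dimensional space $\cH:=\bigotimes_{x\in\mathbf{x}}V_x$, i.e.\ a type~$\mathrm{I}_n$ factor with $n<\infty$. Up to unitary equivalence such an algebra admits a single irreducible representation, namely its defining one on $\cH$; hence $\beta$ is this representation and $\pi\cong\pi_{\mathbf{x},\rho,\beta}\cong\bigotimes_{x\in\mathbf{x}}\rho_x\circ\ev_x$ is a finite tensor product of irreducible evaluation representations on the finite-dimensional space $\cH$.

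The only substantive point beyond unwinding Theorem~\ref{Growth} is the lattice lower bound of the second paragraph; everything else is bookkeeping together with the elementary representation theory of finite type~I factors. I would emphasise that this is exactly where $k=0$ enters: for $k\geq 1$ the vanishing factor $d(x,\partial X)^k$ can offset growing weights $\|\lambda_x\|_\kappa$, which is precisely the mechanism producing the infinite-dimensional (type~II and~III) representations discussed in Section~\ref{sec:6}.
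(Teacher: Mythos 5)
Your proof is correct and takes exactly the route the paper intends: the paper states this corollary as a direct consequence of Theorem~\ref{Growth} with $k=0$, and your unwinding --- the specialization of \eqref{FastEnough} to $\sum_{x\in\mathbf{x}}\|\lambda_x\|_{\kappa}<\infty$, the discreteness of the weight lattice forcing $\mathbf{x}$ to be finite once trivial factors are discarded, and the uniqueness of the irreducible representation of the resulting finite-dimensional matrix algebra $\cA_{\mathbf{x},\rho}$ --- is precisely the bookkeeping the paper leaves implicit. No gaps.
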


\subsection{Further problems}

It is an interesting question to which extent the 
theory developed in Sections~\ref{sec:2} and \ref{sec:3} can be 
extended to the case where $\cA$ is a general non-unital cia.
Not unrelated, it would also be interesting to see whether 
the results on Lie algebras of sections in Section~\ref{sec:4}
extend to the context of complex analytic or
algebraic geometry.

\subsubsection{Unital cias}

It is straightforward to apply our techniques to the \emph{unital}
cia $\cA = \cO_{\mathrm{an}}(T^n)$ of analytic functions 
on the torus $T^n \subset \C^n$ (cf.\ Example~\ref{ex:cia}~(d)).
The characters are given by evaluation in points of $T^n$, and are thus
in particular involutive w.r.t.\ the
involution $f^*(z_1,\ldots, z_n) := \overline{f}(1/\overline{z}_1,\ldots,1/\overline{z}_n)$.
Corollary~\ref{cor:4.9} 
guarantees that every holomorphic multiplicative map $\phi \: \cO_{\mathrm{an}}(T^n) \rightarrow \C$ 
is given by evaluation in finitely many points $p_i \in T^n$, i.e.,  
$\phi(f) = \Pi_{i=1}^{N}f(p_i)$.
Using Theorem~\ref{thm:tenspro}, we then see that every 
irreducible bounded $*$-representation of the Lie algebra $\cO_{\mathrm{an}}(T^n,\fg)$
of analytic functions on $T^n$ with values in $\fg = \fk_{\C}$, for 
$\fk$ a compact semisimple
Lie algebra, is a tensor product of finitely many irreducible
evaluation representations in different points $p_i$ of $T^n$.

\subsubsection{Non-unital cias}

If $\cA$ is a non-unital cia, then the results of 
Sections~\ref{sec:2} and \ref{sec:3} do not apply.
The localization techniques of Section~\ref{sec:4}
will work for certain
classes of cias which are soft sheaves over their spectra, 
but more `rigid' non-unital cias, such as the algebra
$\cA_p = \cO_{\mathrm{an}}(T^n)_{p}$ of analytic functions on $T^n$ that
vanish at $p\in T^n$, cannot be handled in this way.

It is clear that the Lie algebra $\fg(\cA_p)$
allows for infinite tensor products of evaluation representations, 
coming from 
holomorphic multiplicative involutive maps of the form
\[ F \: \1 + \cA_p \to \C, \quad 
(1 + f) \mapsto \prod_{m =1}^{\infty} (1 + f(p_m))\,, \] 
where $(p_m)_{m\in \N}$ is
a sequence of points in $T^n$ with 
$\sum_{m=0}^{\infty} d(p_m,p) < \infty$
(cf.\ Examples
\ref{ex:6.3} and \ref{ex:6.4}). Here we use that, for any 
bounded set $S$ of holomorphic functions in a neighborhood $U$ of $T^n$, 
the set $\{f'(p) \: f \in S\}$ of derivatives in $p$ is bounded 
by the Cauchy estimates. 

In this connection, the following question arises: 
Suppose that $\cA$ is a non-unital involutive cia, and 
$F \: \1 + \cA \to \C$ a holomorphic, multiplicative
and involutive map.
Is the functional $\lambda := \dd F(\1) \: \cA \to \C$ inducible? 
As the following proposition shows, it makes a serious difference whether 
we consider multiplicative homomorphisms on $(\cA,\cdot)$ or on $(\1 + \cA, \cdot)$. 

\begin{prop} Let $\cA$ be a non-unital commutative $\bk$-algebra. 
Then every multiplicative map 
$\chi \: (\cA, \cdot)\to \C$ 
extends to a multiplicative map 
$\tilde\chi \: (\cA_+, \cdot)\to \C$ with 
$\tilde\chi(\1) = 1$. 
If, in addition, $\cA$ is a complex cia and $\chi$ is holomorphic, 
then the same holds for $\tilde\chi$ and then $\chi$ is a finite 
product of algebra homomorphisms.   
\end{prop}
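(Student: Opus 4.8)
The plan is to prove the proposition in two stages, separating the purely algebraic statement from the holomorphic one. First I would establish the extension property $\tilde\chi$ for an arbitrary (not necessarily topological) multiplicative map $\chi \colon (\cA,\cdot) \to \C$. The key observation is that multiplicativity of $\chi$ forces $\chi(a)\chi(b) = \chi(ab)$ for all $a,b \in \cA$, but this does \emph{not} determine $\chi(\1)$ since $\cA$ has no unit. I would define $\tilde\chi(t\1 + a) := t + \chi(a)$ and check multiplicativity directly: for $(t\1+a)(s\1+b) = ts\,\1 + (tb + sa + ab)$ one needs
\[
\tilde\chi\big(ts\,\1 + (tb+sa+ab)\big) = ts + \chi(tb+sa+ab) = ts + t\chi(b) + s\chi(a) + \chi(a)\chi(b),
\]
which equals $(t + \chi(a))(s+\chi(b)) = \tilde\chi(t\1+a)\tilde\chi(s\1+b)$, using linearity and multiplicativity of $\chi$ on $\cA$. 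So the extension is automatic and forced; this uses only the algebra structure and the definition of $\cA_+$ from the Definition in Section~\ref{sec:1}.

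The main subtlety — and what I expect to be the chief obstacle — is the holomorphic part. Here I must verify that $\tilde\chi \colon \cA_+ \to \C$ is \emph{holomorphic} given only that $\chi$ is holomorphic on $\cA$. Since $\tilde\chi(t\1 + a) = t + \chi(a)$ is affine in the $\C\1$-direction and holomorphic in the $\cA$-direction, and since $\cA_+ = \C\1 \oplus \cA$ as locally convex spaces with the product topology, holomorphy on the direct sum follows from separate holomorphy in the two factors together with local boundedness; I would invoke the standard fact (e.g.\ from the theory of holomorphic maps between locally convex spaces, as in \cite{Ne06}) that a separately holomorphic, locally bounded map on a product of locally convex spaces is holomorphic. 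The linear part $t \mapsto t$ is trivially holomorphic and the map $a \mapsto \chi(a)$ is holomorphic by hypothesis.

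Once $\tilde\chi$ is known to be a holomorphic multiplicative character on the \emph{unital} cia $\cA_+$ (which is a cia by the definition in Section~\ref{sec:1}), I would apply Corollary~\ref{cor:4.9} to $\tilde\chi$. That corollary yields finitely many continuous algebra homomorphisms $\tilde\chi_1,\ldots,\tilde\chi_N \colon \cA_+ \to \C$ with $\tilde\chi = \tilde\chi_1 \cdots \tilde\chi_N$. Restricting to $\cA \subseteq \cA_+$ and using that each $\tilde\chi_j$ is an algebra homomorphism, I obtain $\chi = \tilde\chi|_\cA = \prod_{j=1}^N (\tilde\chi_j|_\cA)$. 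The restrictions $\chi_j := \tilde\chi_j|_\cA$ are multiplicative maps on $(\cA,\cdot)$, and the product formula exhibits $\chi$ as a finite product of algebra homomorphisms of $\cA$, as claimed.

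The one delicate point worth flagging is that Corollary~\ref{cor:4.9} requires $\phi$ to be holomorphic and multiplicative on a unital cia, so the reduction genuinely hinges on passing to $\cA_+$ and verifying holomorphy there; the purely algebraic extension alone is not enough to invoke the finiteness result. I would therefore devote the most care to the holomorphy verification, since the rest is formal.
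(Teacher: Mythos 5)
There is a genuine gap, and it sits at the very first step: your extension formula $\tilde\chi(t\1 + a) := t + \chi(a)$ is incorrect, because you verify its multiplicativity ``using linearity and multiplicativity of $\chi$ on $\cA$'' --- but $\chi$ is not assumed to be linear. In this proposition (as throughout Section~\ref{sec:1}, cf.\ Theorem~\ref{thm:2.5}, Corollary~\ref{cor:4.9} and Remark~\ref{rem:bd}(c)), a multiplicative map $(\cA,\cdot) \to \C$ respects only the multiplication; the interesting examples are precisely the non-linear ones, e.g.\ products of several characters. If you add linearity, then $\chi$ is already an algebra homomorphism and the conclusion that $\chi$ is a finite product of algebra homomorphisms is vacuous. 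For a concrete failure of your formula, take $\cA = C^\infty_c(\R,\C)$, $\chi(f) := f(0)f(1)$, and choose $f,g$ with $f(0)=g(1)=1$, $f(1)=g(0)=0$. Then $\tilde\chi(\1+f)\,\tilde\chi(\1+g) = 1\cdot 1 = 1$, whereas $(\1+f)(\1+g) = \1 + f + g + fg$ with $(f+g+fg)(0) = (f+g+fg)(1) = 1$ gives $\tilde\chi\big((\1+f)(\1+g)\big) = 1 + \chi(f+g+fg) = 2$. So your $\tilde\chi$ is not multiplicative, and everything downstream (the holomorphy discussion and the application of Corollary~\ref{cor:4.9}) rests on this false start. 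Note that the correct extension in this example is $\tilde\chi(t\1+f) = (t+f(0))(t+f(1))$, which is quadratic, not affine, in~$t$.

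The repair requires a different construction of $\tilde\chi$, which is what the paper does: assuming $\chi \neq 0$ (the case $\chi = 0$ extends by $\tilde\chi(a+t\1) := t$), pick $b \in \cA$ with $\chi(b) \neq 0$ and set $\tilde\chi(a) := \chi(ab)/\chi(b)$ for $a \in \cA_+$. Commutativity and multiplicativity of $\chi$ show that this is independent of the choice of $b$, that $\chi(ab) = \tilde\chi(a)\chi(b)$ holds for all $a \in \cA_+$, $b \in \cA$ (also when $\chi(b)=0$), and hence that $\tilde\chi$ is multiplicative and restricts to $\chi$ on $\cA$. Holomorphy of $\tilde\chi$ is then immediate because $a \mapsto ab$, $\cA_+ \to \cA$, is continuous linear and $\chi$ is holomorphic --- no separate-holomorphy argument on a direct sum is needed. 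From there your last step is fine: apply Corollary~\ref{cor:4.9} (which rests on Theorem~\ref{thm:2.5}) to the holomorphic multiplicative character $\tilde\chi$ on the unital cia $\cA_+$, and restrict the resulting characters to $\cA$.
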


\begin{prf} If $\chi = 0$, then we put 
$\chi(a + t \1) := t$. We now assume that $\chi \not=0$. 
For $a \in \cA_+$, we write 
$\lambda_a \: \cA \to \cA$ for the multiplication with $a$. 
Then there exists a $b\in \cA$ with $\chi(b) \not=0$. We put 
\[ \chi_b(a) := \frac{\chi(ab)}{\chi(b)}\quad \mbox{ for } \quad a \in \cA_+, b \in \cA.\] 
For any other $c \in \cA$ with $\chi(c) \not=0$ and $a \in \cA_+$, we then 
have 
\[ \chi_b(a) \chi(b) \chi(c) 
= \chi(ab) \chi(c) = \chi(abc) 
= \chi(acb) = \chi(ac)\chi(b) = \chi_c(a) \chi(c)\chi(b),\] 
so that $\chi_b(a) = \chi_c(a)$. 
Therefore 
\[ \tilde\chi \: \cA_+ \to \C, \quad 
\tilde\chi(a) := \chi_b(a) \quad \mbox{ for } \quad 
\chi(b) \not=0 \] 
is a well-defined extension of~$\chi$. 
It satisfies $\chi(ab) = \tilde\chi(a) \chi(b)$ for $\chi(b) \not=0$. 
If $\chi(b) = 0$, then we choose a $c \in \cA$ with $\chi(c) \not=0$ and 
obtain 
\[ \chi(ab)\chi(c) =  \chi(abc) = \chi(bac) = \chi(b) \chi(ac) = 0,\] 
so that $\chi(ab) =0$. This shows that 
\[ \chi(ab) = \tilde\chi(a)\chi(b) \quad \mbox{ for } \quad 
a \in \cA_+, b \in \cA.\] 
For $a,a' \in \cA_+$ and $\chi(b) \not=0$, we then have 
\[ \tilde\chi(aa') = \chi_b(aa') 
= \frac{\chi(aa'b)}{\chi(b)}
= \tilde\chi(a)\frac{\chi(a'b)}{\chi(b)}
= \tilde\chi(a)\tilde\chi(a').\]
Therefore $\tilde\chi$ is multiplicative. 

Now suppose that $\cA$ is a complex cia and that 
$\chi$ is holomorphic. For any $b \in \cA$ with $\chi(b) \not=0$, 
the relation $\tilde\chi(a) = \chi_b(a)$, together with 
the holomorphy of the map $\cA_+ \to \cA, a \mapsto ab$ 
now implies that also $\tilde\chi$ is holomorphic. 
We conclude with Theorem~\ref{thm:2.5} that $\tilde\chi$ 
is a finite product of characters. 
\end{prf}

Not all bounded irreducible $*$-representations of $\cA_p = \cO_{\mathrm{an}}(T^n)_{p}$
are highest weight representations.
The Lie algebra
$\fsl_2(\cA_p)$ has an 
irreducible bounded \mbox{$*$-representation}, 
\[ \pi(X) := \bigoplus_{m\in\N} \,\1^{m} \otimes X(p_m) \otimes \1^{\infty},\] 
on any infinite tensor product space 
$\cH := \bigotimes_{m \in \N} (\C^2, v_m),$
where $v_m \in \C^2$ is a sequence of unit vectors and $(p_m)_{m\in \N}$
a sequence of points in $T^n$ with $\sum_{m\in \N}d(p_m,p) < \infty$
(cf.\ \eqref{eq:inftens} and Example~\ref{ex:a.5}). 
In particular, we have irreducible bounded unitary representations 
of $\fsl_2(\cA_p)$ 
where $\cH^{\g^-} = \{0\}$. 

Clearly, these cannot be classified 
with the highest weight theory from Section~\ref{sec:2}. 
This raises the following question:
Suppose that $\cE = \cH^{\g^-}$ is non-zero for an irreducible bounded unitary 
representation of $\fk_{\cA}$, with $\cA$ a non-unital involutive cia. 
Then the arguments from the proof of Proposition~\ref{prop:6.1} 
still imply that $\dim \cE = 1$, so that the representation 
$(\rho, \cE)$ of $\g^0 = h \otimes \cA$ is given by a linear functional 
$\lambda \: \cA \to \C$. In view of the above example, $\lambda$ 
can be an infinite sum $\sum_{j=1}^\infty \lambda_j \otimes \chi_j$, 
so that Theorem~\ref{thm:ind-charac} does not extend to the non-unital case. 
Does $F(e^a) := e^{\lambda(a)}$ extend to a multiplicative holomorphic map 
$F \: \cA \to \C$? 

\subsubsection{Multiloop algebras}

It would also be interesting to see whether 
the results on Lie algebras of sections in Section~\ref{sec:4}
extend to the context of complex analytic or
algebraic geometry. 

The class of \emph{twisted multiloop algebras} are an interesting example.
In this case, 
the finite group $H = \Pi_{i=1}^{n}\Z/k_i \Z$
acts on $\fg$ by automorphisms, and 
on $T^n$ by the free action 
$h \cdot z := (e^{2\pi i h_1/k_1}z_1,\ldots,e^{2\pi i h_n/k_n}z_n)$.
Then $\fK := T^n \times_{H} \fk$ is a bundle of Lie algebras over 
$T^n/H \simeq T^n$, and the twisted multiloop algebras
in the algebraic, analytic and smooth setting are the corresponding Lie algebras 
of sections
\begin{eqnarray}\label{eqn:kinds}
\cO_{\rm alg}(T^n,\fg)^H \subset \cO_{\mathrm{an}}(T^n,\fg)^H \subset C^{\infty}(T^n,\fg)^H\,,
\end{eqnarray}
where $\cO_{\rm alg}(T^n,\fg) := \C[t_1^{\pm},\ldots,t_n^{\pm}] \otimes \fg$.
Assuming that the action of $H$ on $\fg$ preserves a compact real form $\fk$,
Theorem~\ref{thm:d.24} implies that every irreducible bounded $*$-representation
of $\Gamma(\fK)_{\C} \simeq C^{\infty}(T^n,\fg)^{H}$
is a finite tensor product of irreducible evaluation representations
in different points $p_i$ of $T^n$.
Analogously, it follows from \cite{La10} that every
finite-dimensional irreducible representation of 
$\cO_{\rm alg}(T^n,\fg)^H$ is given by a finite tensor product of evaluation representations,
which implies the corresponding result in the analytic and smooth setting because the inclusions
in (\ref{eqn:kinds}) are dense. 
(See also \cite{NSS12}, where the action of $H$ on $T^n$ is generalized to the 
automorphic action of a finite group on a scheme.)

This striking similarity raises the question whether our results extend to the 
analytic/algebraic setting also if the bundle $\fK \rightarrow X$ does not 
allow for a flat Lie connection, so that the structure group is not 
finite.

\end{document}